\renewcommand\bibname{References}
\pgfplotsset{compat=1.9}
\def\arraystretch{1.3}
\renewcommand{\P}{\ensuremath{\mathbb{P}}}
\newcommand{\E}{\ensuremath{\mathbb{E}}}
\newcommand{\N}{\ensuremath{\mathbb{N}}}
\newcommand{\bX}{\ensuremath{\mathbf{X}}}
\newcommand{\bY}{\ensuremath{\mathbf{Y}}}
\newcommand{\bT}{\ensuremath{\mathbf{T}}}
\newcommand{\bP}{\ensuremath{\mathbf{P}}}
\newcommand{\cN}{\ensuremath{\mathcal{N}}}
\newcommand{\cH}{\ensuremath{\mathcal{H}}}
\newcommand{\cF}{\ensuremath{\mathcal{F}}}
\newcommand{\cD}{\ensuremath{\mathcal{D}}}
\newcommand{\cA}{\ensuremath{\mathcal{A}}}
\newcommand{\cM}{\ensuremath{{\mathcal{M}}}}
\newcommand{\cX}{\ensuremath{\mathcal{X}}}
\newcommand{\cR}{\ensuremath{\mathcal{R}}}
\newcommand{\rd}{\ensuremath{\mathrm{d}}}
\renewcommand{\c}{\ensuremath{\mathrm{c}}}
\newcommand{\I}{\ensuremath{\mathbb{I}}}
\newcommand{\R}{\ensuremath{\mathbb{R}}}
\newcommand{\T}{\ensuremath{\mathsf{T}}}
\newcommand{\GCV}{\ensuremath{\mathrm{GCV}}}
\newcommand{\LOOCV}{\ensuremath{\mathrm{LOOCV}}}
\newcommand{\CI}{\ensuremath{\mathrm{CI}}}
\newcommand{\bigsetminus}{\mathbin{\big\backslash}}
\newcommand{\Bigsetminus}{\mathbin{\Big\backslash}}
\newcommand{\LM}{{\mathrm{LM}}}
\newcommand{\SD}{{\mathrm{SD}}}
\newcommand{\ABias}{{\mathrm{ABias}}}
\newcommand{\ASD}{{\mathrm{ASD}}}
\newcommand{\AVar}{{\mathrm{AVar}}}
\newcommand{\betaf}{{\beta_{f}}}
\newcommand{\betamu}{{\beta_{\mu}}}
\newcommand{\betasigma}{{\beta_{\sigma}}}
\DeclareMathOperator{\AMSE}{AMSE}
\DeclareMathOperator{\argmin}{argmin}
\DeclareMathOperator{\Bias}{Bias}
\DeclareMathOperator{\Bin}{Bin}
\DeclareMathOperator{\Cov}{Cov}
\DeclareMathOperator{\Gam}{Gamma}
\DeclareMathOperator{\Exp}{Exp}
\DeclareMathOperator{\Leb}{Leb}
\DeclareMathOperator{\MSE}{MSE}
\DeclareMathOperator{\NW}{NW}
\DeclareMathOperator{\Unif}{Unif}
\DeclareMathOperator{\Var}{Var}
\newcommand{\flbeta}{{\ThisStyle{%
      \ensurestackMath{\stackengine{-0.5\LMpt}{\SavedStyle \beta}%
        {\SavedStyle {\rule{3.7\LMpt}{0.3\LMpt}}}
{U}{c}{F}{F}{S}}\vphantom{\beta}}}}
\newcommand{\diffi}{\,\mathrm{d}}
\newcommand{\diffd}{\mathrm{d}}
\newcommand{\TODO}[1]{\textcolor{red}{\textup{\textsc{TODO}: #1}}}
 \newcommand{\SDhattab}{{\widehat{\textrm{SD}}}}
\renewcommand{\betaf}{{\beta_{\!\!\:f}}}
\titleformat{\paragraph}[hang]{\bfseries\upshape}{}{0pt}{}[]
\titlespacing*{\paragraph}{0pt}{6pt}{0pt} \newcounter{proofparagraphcounter}
\newcommand{\proofparagraph}[1]{ \refstepcounter{proofparagraphcounter}%
\paragraph{Part \theproofparagraphcounter : #1}}%
\newtheoremstyle{break}%
{\topsep}{\topsep}%
{\itshape}{}%
{\bfseries}{\textbf{.}}%
{.4em}%
{\thmname{#1}\thmnumber{ #2}%
\thmnote{ \rm(#3)\addcontentsline{toc}{subsubsection}{{\it#1 #2}\/: #3}}}
\newtheoremstyle{breakplainproof}
{\topsep}{\topsep}%
{}{}%
{\bfseries}{\textbf{.}}%
{.4em}%
{\thmname{#1}\thmnumber{ #2}%
\thmnote{ \rm(#3)\addcontentsline{toc}{subsubsection}{{\it#1}\/: #3}}}
\theoremstyle{break}
\newtheorem{theorem}{Theorem}
\newtheorem{lemma}{Lemma}
\newtheorem{assumption}{Assumption}
\newtheorem{definition}{Definition}
\theoremstyle{breakplainproof}
\let\proof\relax
\newtheorem*{proof}{Proof}
\theoremstyle{remark}
\begin{document}

\title{
  Inference with Mondrian Random Forests
}

\author{
  Matias D.\ Cattaneo\textsuperscript{1,$\ast$}
  \and
  Jason M.\ Klusowski\textsuperscript{1}
  \and
  William G.\ Underwood\textsuperscript{2}
}

\maketitle

\footnotetext[1]{
  Department of Operations Research
  and Financial Engineering,
  Princeton University
}
\footnotetext[2]{
  Statistical Laboratory,
  University of Cambridge
}
\let\thefootnote\relax
\footnotetext[1]{
  \textsuperscript{*}Corresponding author:
  \href{mailto:cattaneo@princeton.edu}{\texttt{cattaneo@princeton.edu}}
}
\addtocounter{footnote}{-1}\let\thefootnote\svthefootnote

\setcounter{page}{0}\thispagestyle{empty}

\begin{abstract}
Random forests are popular methods for regression and classification analysis,
and many different variants have been proposed in recent years. One interesting
example is the Mondrian random forest, in which the underlying constituent
trees are constructed via a Mondrian process. We give precise bias and variance
characterizations, along with a Berry--Esseen-type central limit theorem, for
the Mondrian random forest regression estimator. By combining these results
with a carefully crafted debiasing approach and an accurate variance estimator,
we present valid statistical inference methods for the unknown regression
function. These methods come with explicitly characterized error bounds in
terms of the sample size, tree complexity parameter, and number of trees in the
forest, and include coverage error rates for feasible confidence interval
estimators. Our debiasing procedure for the Mondrian random forest also
allows it to achieve the minimax-optimal point estimation convergence rate in
mean squared error for multivariate $\beta$-H{\"o}lder regression functions,
for all $\beta > 0$, provided that the underlying tuning parameters are chosen
appropriately. Efficient and implementable algorithms are devised for both
batch and online learning settings, and we study the computational
complexity of different Mondrian random forest implementations. Finally,
simulations with synthetic data validate our theory and methodology,
demonstrating their excellent finite-sample properties.
 \end{abstract}

\bigskip
\noindent\textbf{Keywords}:
Random forests,
regression trees,
Berry--Esseen theorem,
bias correction,
statistical inference,
minimax estimation.

\clearpage
\maketitle
\tableofcontents
\clearpage
\pagebreak

\clearpage

\section{Introduction}

Random forests, first introduced by \citet{breiman2001random},
are a workhorse in modern machine learning
for regression and classification tasks.
Their desirable traits include computational efficiency
in big data settings (via parallelization and greedy heuristics), simplicity of
configuration and
amenability to tuning parameter selection, ability to
adapt to latent structure in high-dimensional data sets, and flexibility in
handling mixed data types, among other virtues.
Random forests have also achieved great empirical successes in many
fields of study, including
healthcare, finance, online commerce, causal inference,
text analysis, bioinformatics, image classification, and ecology.

Since Breiman introduced random forests over twenty years ago, the study of
their statistical properties remains an active area of research. Many
fundamental questions about Breiman's random forests
remain unanswered, owing in part to the subtle ingredients
present in the estimation procedure which
make standard analytical tools ineffective.
These technical difficulties stem from the way the constituent trees greedily
partition the covariate space, utilizing both the covariate and response
data. This creates complicated dependencies on the data that are often
exceedingly hard to untangle without overly stringent assumptions, thereby
hampering theoretical progress.

To address the aforementioned technical challenges while retaining the
phenomenology of Breiman's random forests, a variety of stylized
versions of random forest procedures have been proposed and
studied in the literature.
Early proposals include centered random forests
\citep{biau2012analysis, arnould2023interpolation}
and median random forests
\citep{duroux2018impact, arnould2023interpolation}.
Each tree in a centered random forest is constructed by first choosing a
covariate uniformly at random and then splitting the cell at the midpoint along
the direction of the chosen covariate. Median random forests operate in a
similar way, but involve the covariate data by splitting at the empirical
median along the direction of the randomly chosen covariate.
Known as purely random forests, these procedures simplify Breiman's
original, more data-adaptive version by growing trees that partition
the covariate space in a way that is statistically independent
of the response data.

Yet another variant of random forests, Mondrian random forests
\citep{lakshminarayanan2014mondrian},
have received significant attention from the statistics
and machine learning communities
in recent years
\citep{ma2020isolation, mourtada2020minimax, mourtada2021amf,%
  scillitoe2021uncertainty, vicuna2021reducing, gao2022towards,%
  oreilly2022stochastic, baptista2024trim, oreilly2024minimax,%
oreilly2024statistical, zhan2024non, osborne2025uniformly}.
Like other purely random forest variants,
Mondrian random forests offer a simplified modification of
Breiman's original proposal
in which the partition is generated independently of the data
and according to a canonical stochastic process known as
the Mondrian process \citep{roy2008mondrian}.
The Mondrian process takes a single tuning parameter $\lambda > 0$
known as the ``lifetime''
and enjoys various mathematical properties.
These properties allow Mondrian random forests to be
fitted in an online manner \citep{lakshminarayanan2014mondrian,mourtada2021amf}
and permit a rigorous statistical analysis,
while also retaining some of the appealing features
of other random forest methods. The lifetime parameter $\lambda$,
in analogy with the number of refinements of a
data-adaptive recursive partitioning algorithm,
governs the extent to which the response data is smoothed, with a large
$\lambda$ resulting
in a more complicated partition and therefore less smoothing.
However, unlike a data-adaptive partition which modulates the smoothing
intensity based on local data characteristics, the Mondrian process
applies a uniform smoothing effect globally across all covariates.
Thus, one can draw parallels between (axis-aligned) data-dependent
partitioning schemes and more flexible---albeit, more involved---versions of
the Mondrian process which employ adaptive directional smoothing,
with unique lifetimes $\lambda_j$ learned
for each covariate, permitting tailored smoothing of the responses.

This paper studies the theoretical
statistical properties of Mondrian random forests with
an emphasis on \emph{inference} techniques specific to this procedure.
Although Mondrian random forests are not state-of-the-art for
high-dimensional problems in practice,
we focus on this purely random forest variant not only because of its
importance in the development of random forest theory in general,
but also because the Mondrian process
(along with generalizations including the oblique Mondrian process
and stationary random tessellation processes) is, to date,
the only known randomized recursive tree mechanism
for which the resulting random forest is
minimax-optimal for point estimation over a class of
smooth multivariate regression functions, without requiring sample splitting
\citep{mourtada2020minimax,oreilly2024minimax,oreilly2024statistical}.
In fact, when the covariate dimension exceeds one,
the aforementioned centered and median random forests are both minimax
\emph{suboptimal}, due to their large biases, over the class of Lipschitz
smooth regression functions \citep{klusowski2021sharp}.
It is therefore natural to
focus our study of inference for random forests on versions that at the very
least exhibit competitive bias and variance, as this will have important
implications for the trade-off between confidence and precision.
Moreover, recent studies of generalized random forests
and distributional random forests
identify similar fundamental challenges, namely
those of establishing Gaussian approximations
and combining them with strategies for bias reduction \citep{naf2023confidence}.

Despite their recent popularity, relatively little is
known about the formal
statistical properties of Mondrian random forests.
Focusing on nonparametric regression, \citet{mourtada2020minimax}
recently showed that Mondrian forests containing just a single tree
(called a Mondrian tree) can be minimax-optimal in integrated mean squared
error
whenever the regression function is
$\beta$-H{\"o}lder continuous for some $\beta \in (0, 1]$.
The authors also showed that, when appropriately tuned,
large Mondrian random forests can be similarly
minimax-optimal for $\beta \in (0, 2]$, while the constituent trees cannot.
See also \citet{oreilly2022stochastic} for analogous results on more general
Mondrian tree and forest constructions.
These results formally demonstrate the value of ensembling with random forests
from a point estimation perspective.
No results are currently available in the literature for
statistical inference using Mondrian random forests.

As already mentioned, a different strand of the literature studies the
statistical properties of Breiman's
random forests which form ensembles of
\emph{adaptive} decision trees. In such models,
each constituent tree is constructed with a greedy algorithm that
recursively optimizes a goodness-of-fit metric (such as mean squared error)
using both the covariates and response data;
a leading example in practice is the celebrated
Classification and Regression Tree (CART)
methodology \citep{breiman1984, breiman2001random}.
The underlying complexity of the resulting
procedures make their formal theoretical analysis quite difficult, and
therefore only a more restricted set of results is currently available in the
literature. In terms of estimation theory, \citet{Scornet-Biau-Vert_2015_AOS}
established consistency of
adaptive random forests for additive models with a fixed number of covariates,
and \citet{Chi-Vossler-Fan-Lv_2022_AOS}, \citet{klusowski2021universal}, and
\citet{cattaneo2024convergence} provided rates of convergence for
models with a growing number of covariates, under
different assumptions on the statistical and
algorithmic features of the constituent decision trees.
A framework for tuning tree depths via data-adaptive early stopping was
developed by \citet{miftachov2025early}.
In contrast, formal
inference theory is far less developed, since there are arguably no
satisfactory
theoretical results for fully adaptive decision
tree or random forest methods. For example, \citet{wager2018estimation} provide
asymptotic estimation and
inference results for adaptive random forests, but
they employ sample splitting
(i.e., the so-called ``honesty'' property where the partitioning and,
  separately, the output in the terminal cells are
formed using independent subsamples), and make assumptions
that rule out procedures commonly used in practice such as CART and
other sum-of-squares based splitting criteria;
cf., the so-called ``$\alpha$-regularity'' condition
\citep{cattaneo2024pointwise, cattaneo2025honest}. From a broad perspective,
our paper connects with this distinct thread in the
random forest literature by demonstrating optimal estimation
and inference results for non-adaptive Mondrian random forests with explicit
probability deviation guarantees. Furthermore, by providing a clean
theoretical framework, our results may inform and guide the
development of more adaptive Mondrian random forests methods, a
direction we return to briefly in the conclusion of this paper.

\subsection{Contributions}

Our paper contributes to the literature on
the foundational statistical properties
of Mondrian random forest regression estimation with two main results.
Firstly, we give a central limit theorem
for the classical Mondrian random forest point estimator
under weak conditions,
and propose valid large-sample inference procedures employing a consistent
standard error estimator.
We establish these results by
deploying a restricted moments version of the Berry--Esseen theorem for
independent but not identically distributed (i.n.i.d.)
random variables \citep[Theorem~5.7]{Petrov_1995_Book}
because we need to handle delicate
probabilistic features of the Mondrian random forest estimator.
In particular, we deal with the existence of
Mondrian cells which are ``too small''
and lead to a reduced effective (local) sample size
for some trees in the forest.
Such pathological cells are in fact typical in Mondrian random forests and
complicate the probability limits of certain sample averages; in fact, small
Mondrian random forests (or indeed single Mondrian trees) remain random even
in the limit due to the lack of ensembling. The presence of such small cells
renders inapplicable prior distributional approximation results for
partitioning-based estimators in the literature
\citep{huang2003local,cattaneo2020large}, since the commonly required
quasi-uniformity assumption on the underlying partitioning scheme (cf.,
$\alpha$-regularity in the adaptive random forest literature) is violated
by partitions generated using the Mondrian process. We circumvent this
technical challenge by establishing new theoretical results for Mondrian
partitions and their associated Mondrian trees and forests, which may be of
independent interest.
Our distributional approximation does not rely on sample splitting;
unlike approaches based on ``honest'' trees,
the estimator is fit using
the entire sample of covariates and responses simultaneously.

The second main contribution of our paper is to propose a debiasing approach
for the Mondrian random forest point estimator.
We accomplish this by first precisely characterizing the probability limit
of the large sample conditional bias, and then applying a debiasing procedure
based on the generalized jackknife \citep{schucany1977improvement}.
We thus exhibit a Mondrian random forest variant which is minimax-optimal
in pointwise mean squared error when the
regression function is $\beta$-H{\"o}lder for any $\beta > 0$.
Our method works by generating an ensemble of Mondrian random forests
carefully chosen to have smaller misspecification bias
when extra smoothness is available,
resulting in minimax optimality even for $\beta > 2$.
This result complements \citet{mourtada2020minimax} by demonstrating the
existence of a class of Mondrian random forests that can efficiently exploit
the additional smoothness of the unknown regression function for
minimax-optimal point estimation.
Our proposed debiasing procedure is also useful when conducting
statistical inference because it provides a principled method
for ensuring that the bias is negligible relative
to the standard deviation of the estimator.
More specifically, we use our debiasing approach to construct
valid confidence intervals based on robust bias correction
\citep{calonico2018jasa,calonico2022bernoulli},
and include an explicit bound on their coverage error probability.

For the purposes of implementation, we propose techniques for
tuning parameter selection and demonstrate the practical applicability
and accuracy of our methodology through empirical studies with simulated data.
We also discuss applications to batch and online learning settings,
presenting computationally efficient algorithms
along with bounds for their average case time complexity.

\subsection{Organization}

Section~\ref{sec:setup} gives the
assumptions on the data generating process,
using a H{\"o}lder smoothness condition on the regression function
to control the bias of various estimators.
We also introduce the Mondrian process
and use it to define the Mondrian random forest
estimator, stating the assumptions
on its lifetime parameter and the number of trees.

Section~\ref{sec:inference} presents our first set of main results.
We begin by precisely characterizing the bias of
the Mondrian random forest estimator in Lemma~\ref{lem:bias}, with
the aim of subsequently applying a debiasing procedure.
We similarly analyze the variance of this estimator
(Lemma~\ref{lem:variance}), and deduce its rate of convergence in
Theorem~\ref{thm:rate}.
Next, we present our Berry--Esseen-type
central limit theorem for the centered Mondrian
random forest estimator under weak conditions as Theorem~\ref{thm:clt},
and discuss implications for lifetime parameter selection.
To enable valid feasible statistical inference,
we provide a consistent variance estimator
in Lemma~\ref{lem:variance_estimation}, and use it to
construct confidence intervals in Theorem~\ref{thm:confidence}.

Section~\ref{sec:debiased} introduces our proposed debiased Mondrian random
forests,
a family of estimators
based on linear combinations of Mondrian random forests
with varying lifetime parameters.
These parameters are carefully chosen to annihilate leading terms
in our bias characterization,
yielding an estimator with superior bias properties
(Lemma~\ref{lem:bias_debiased}).
We also study the variance of this debiased estimator
(Lemma~\ref{lem:variance_debiased}), and derive
its rate of convergence in Theorem~\ref{thm:minimax}.
The resulting rate is
shown to be minimax-optimal in mean squared error for each
H{\"o}lder parameter $\beta > 0$,
under regularity conditions.
Furthermore, Theorem~\ref{thm:clt_debiased} verifies that a
Berry--Esseen theorem holds for the debiased Mondrian random forest.
We again discuss the implications for the lifetime parameter,
and provide a consistent variance estimator
(Lemma~\ref{lem:variance_estimation_debiased})
for constructing confidence intervals
(Theorem~\ref{thm:confidence_debiased}).

Section~\ref{sec:implementation} discusses
implementation details and empirical results,
beginning by presenting a data-driven approach to
selecting the crucial lifetime parameter using polynomial estimation.
We also give advice on choosing the number of trees,
as well as other parameters associated with the debiasing procedure.
Empirical simulation results are presented using synthetic data,
demonstrating the practical value of our methods for optimal point estimation
and feasible robust bias-corrected inference.

Section~\ref{sec:compute} considers the computational aspects
of our methodology, presenting algorithmic procedures
with precisely characterized average case time complexity bounds for
the batch setting (Algorithm~\ref{alg:batch}, Lemma~\ref{lem:compute_batch})
and for online learning regimes
(Algorithm~\ref{alg:online}, Lemma~\ref{lem:compute_online}).

Concluding remarks are given in Section~\ref{sec:conclusion},
while \iftoggle{journal}{the supplementary material
\citep{cattaneo2025mondriansupplement} contains}{the appendices contain}
all the mathematical proofs of our theoretical results%
\iftoggle{journal}{}{ (Appendix~\ref{sec:proofs})},
alongside additional empirical studies%
\iftoggle{journal}{}{ (Appendix~\ref{sec:additional_empirical})}.

\subsection{Notation}

We write $\|\cdot\|_2$ for the usual Euclidean $\ell^2$ norm on $\R^d$.
The natural numbers are $\N = \{0, 1, 2, \ldots \}$.
We use $a \wedge b$ for the minimum and $a \vee b$ for the maximum
of two real numbers.
For non-negative sequences
$a_n$ and $b_n$, we write
$a_n \lesssim b_n$
to indicate that
$a_n / b_n$ is bounded for $n\geq 1$.
If $a_n \lesssim b_n \lesssim a_n$,
we write $a_n \asymp b_n$.
For random non-negative sequences
$A_n$ and $B_n$, similarly we write
$A_n \lesssim_\P B_n$
if $A_n / B_n$ is bounded in probability.
Let $\Phi: \R \to \R$ be the
cumulative distribution function of the standard normal
distribution, and for $\alpha \in (0, 1)$, let
$q_{\alpha}$ be the normal quantile satisfying $\Phi(q_{\alpha}) = \alpha$.

\section{Setup}
\label{sec:setup}

When using a Mondrian random forest, there are two sources of randomness.
The first is the data,
and here we consider the nonparametric regression setting
with $d$-dimensional covariates.
The second source is injected purposely from a collection of
independent trees drawn from a Mondrian process
using a specified lifetime parameter.

\subsection{Data generation}

We begin with a definition of H{\"o}lder continuity,
which is used to determine a target class of regression functions,
and which participates in controlling the bias of various estimators.

\begin{definition}[H{\"o}lder continuity]%

  Take $\beta > 0$ and define
  $\flbeta = \lceil \beta - 1 \rceil$
  as the largest integer strictly less than $\beta$.
  We say a function $g: [0,1]^d \to \R$
  is $\beta$-H{\"o}lder continuous and
  we write $g \in \cH^\beta$ if
  $g$ is $\flbeta$ times differentiable and
  $\max_{|\nu| = \flbeta}
  \left| \partial^\nu g(x) - \partial^{\nu} g(x') \right|
  \leq C \|x-x'\|_2^{\beta - \flbeta}$
  for some constant $C > 0$
  and all $x, x' \in [0,1]^d$.
  Here, $\nu \in \N^d$ is a multi-index with
  $|\nu| = \sum_{j=1}^d \nu_j$ and
  $\partial^{\nu} g(x) =
  \partial^{|\nu|} g(x) \big/ \prod_{j=1}^d \partial x_j^{\nu_j}$.

\end{definition}

Throughout this paper,
we assume that the data satisfies the following assumption.

\begin{assumption}[Data generation]%
  \label{ass:data}

  Fix $d \geq 1$ and
  let $(X_i, Y_i)$ be independent and identically distributed (i.i.d.) samples
  from a distribution on $\R^d \times \R$,
  writing $\bX = (X_1, \ldots, X_n)$
  and $\bY = (Y_1, \ldots, Y_n)$.
  Suppose $X_i$ has Lebesgue density function $f(x)$
  on $[0,1]^d$ which is bounded away from zero
  and satisfies $f \in \cH^{\betaf}$ for some $\betaf > 0$.
  Suppose $\E[Y_i^2 \mid X_i]$ is bounded,
  let $\mu(X_i) = \E[Y_i \mid X_i]$,
  and assume $\mu \in \cH^{\betamu}$ where $\betamu > 0$.
  Let $\varepsilon_i = Y_i - \mu(X_i)$ and assume
  $\sigma^2(X_i) = \E[\varepsilon_i^2 \mid X_i]$ is
  bounded away from zero with $\sigma^2 \in \cH^{\betasigma}$
  for some $\betasigma > 0$.
  Set $\beta = \betamu \wedge (\betaf + 1)$.

\end{assumption}

Some comments are in order surrounding Assumption~\ref{ass:data}.
The requirement that the covariate density $f(x)$
should be strictly positive on all of $[0,1]^d$ may seem restrictive,
particularly when $d$ is moderately large.
However, since our theory is presented pointwise in
the design point $x$,
it is sufficient for this condition to hold only on some neighborhood of $x$.
To see this, note that continuity implies the density
is positive on some hypercube containing $x$. Upon rescaling
the covariates, this hypercube can be mapped onto $[0,1]^d$.
The same argument holds for the H{\"o}lder smoothness
assumptions, and for the upper and lower bounds
on the conditional variance function.

The parameter $\beta$ represents the effective
degree of smoothness which is captured by the
(suitably debiased) Mondrian random forest.
Its definition is motivated as follows: firstly,
we take $\beta \leq \betamu$ in order to compare our rates
of convergence with classical results for $\beta$-H\"older regression
functions. Secondly, due to the presence of design bias,
we require in our analysis that the density
function $f(x)$ should also be smooth
(though not necessarily as smooth as $\mu$),
imposing $\beta \leq \betaf + 1$. Our proofs characterize the roles of each
of the smoothness parameters $\betamu$, $\betaf$ and $\betasigma$ precisely,
though our main results depend only on $\beta$.
By allowing for $\beta_f < 1$, we strictly generalize
the Lipschitz density assumption of \citet[Theorem~3]{mourtada2020minimax}.

\subsection{The Mondrian process}

The Mondrian process was introduced by
\citet{roy2008mondrian} and offers a canonical method
for generating random rectangular partitions,
which can be used as the trees for a random forest
\citep{lakshminarayanan2014mondrian}.
For the reader's convenience, we give a brief description of this process
here; see \citet[Section~3]{mourtada2020minimax}
for a more complete construction.

For a fixed dimension $d$ and lifetime parameter $\lambda > 0$,
the Mondrian process is a stochastic process taking values
in the set of finite rectangular partitions of $[0,1]^d$.
For a rectangle $D = \prod_{j=1}^d [a_j, b_j] \subseteq [0,1]^d$,
we denote the side aligned with dimension $j$ by $D_j = [a_j, b_j]$,
write $D_j^- = a_j$ and $D_j^+ = b_j$ for its left and right
endpoints respectively,
and use $|D_j| = D_j^+ - D_j^-$ for its length.
The volume of $D$ is $|D| = \prod_{j=1}^{d} |D_j|$
and its linear dimension is $|D|_1 = \sum_{j=1}^{d} |D_j|$.

To sample a partition $T$ from the Mondrian process
$\cM \big( [0,1]^d, \lambda \big)$,
start at time $t=0$ with the trivial partition
of $[0,1]^d$ which has no splits.
Then repeatedly apply the following procedure to each cell $D$
in the partition.
Let $t_D$ be the time at which the cell was formed,
and sample $E_D \sim \Exp(|D|_1)$,
where $\Exp(a)$ is the exponential distribution on $[0, \infty)$
with Lebesgue density $a e^{-a x}$.
If $t_D + E_D \leq \lambda$, then split $D$.
This is done by first selecting a split dimension $J$ with
$\P(J=j) = |D_j| / |D|_1$, and then sampling a split location
$S_J \sim \Unif\big[D_J^-, D_J^+\big]$.
The cell $D$ splits into the two new cells
$\{x \in D : x_J \leq S_J\}$ and $\{x \in D : x_J > S_J\}$,
each with formation time $t_D + E_D$.
The output of this sampling procedure
is the partition $T$ consisting of the cells $D$
which were not split because $t_D + E_D > \lambda$.
The cell in $T$ containing a point $x \in [0,1]^d$
is written $T(x)$.
Figure~\ref{fig:mondrian_process} shows typical realizations of
$T \sim \cM\big( [0,1]^d, \lambda \big)$ for
$d=2$ and with different lifetime parameters $\lambda$.
\begin{figure}[ht]
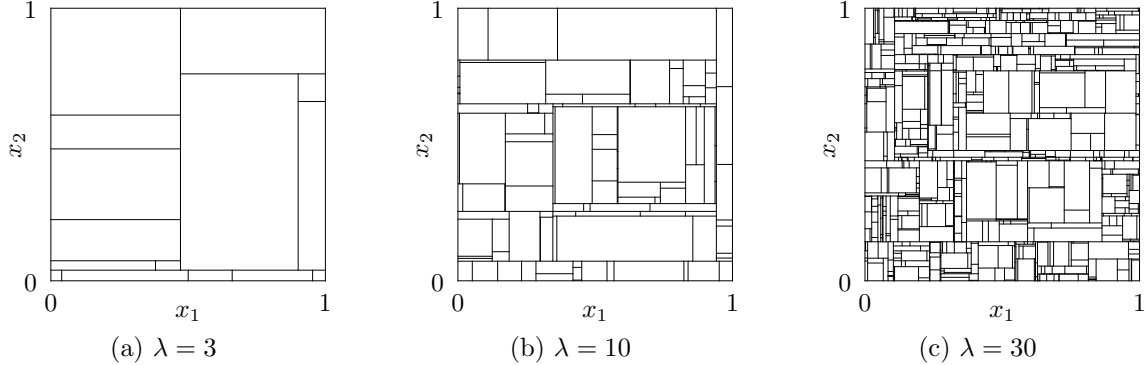

  \iftoggle{journal}{\vspace*{0mm}}{\vspace*{-4mm}}
  \centering
  \begin{subfigure}{0.32\textwidth}
    \centering
    \resizebox{0.97\textwidth}{!}{
\begingroup%
\makeatletter%
%
\makeatother%
\endgroup%
     }
    \caption{$\lambda = 30$}
  \end{subfigure}
  \caption{
    The Mondrian process
    $T \sim \cM \big( [0,1]^d, \lambda \big)$
    with $d=2$ and lifetime parameters $\lambda$.
  }
  \label{fig:mondrian_process}
  \vspace*{-5mm}
\end{figure}

\subsection{Mondrian random forests}

We define the Mondrian random forest estimator
\eqref{eq:estimator}
as in \citet{lakshminarayanan2014mondrian} and
\citet{mourtada2020minimax},
and will later extend it to a debiased version
in Section~\ref{sec:debiased}.
For a lifetime parameter $\lambda > 0$ and
forest size $B \geq 1$,
let $\bT = (T_1, \ldots, T_B)$ be a Mondrian forest
where $T_b \sim \cM\big([0,1]^d, \lambda\big)$ are
mutually independent Mondrian trees which are
independent of the data.
For $x \in [0,1]^d$, we write
$N_b(x) = \sum_{i=1}^{n} \I\{X_i \in T_b(x)\}$
for the number of samples in $T_b(x)$,
with $\I$ denoting an indicator function.
Then the Mondrian random forest estimator of $\mu(x)$ is
\begin{align}
  \label{eq:estimator}
  \hat\mu(x)
  =
  \frac{1}{B}
  \sum_{b=1}^B
  \frac{\sum_{i=1}^n Y_i \, \I\big\{ X_i \in T_b(x) \big\}}
  {N_b(x)}.
\end{align}
If there are no samples $X_i$ in $T_b(x)$ then $N_b(x) = 0$,
so we define $0/0 = 0$
(see
  \iftoggle{journal}{the supplementary material
  \citep{cattaneo2025mondriansupplement}}{Appendix~\ref{sec:proofs}}
for details).
To ensure the bias and variance of the Mondrian random forest estimator
converge to zero (see Section~\ref{sec:inference}),
and to avoid boundary issues,
we impose some basic conditions on
$x$, $\lambda$, and $B$ in Assumption~\ref{ass:estimator}.
We emphasize that our results are for the
low-dimensional nonparametric regime where
$d$ does not depend on the sample size $n$.

\begin{assumption}[Mondrian random forest estimator]%
  \label{ass:estimator}
  Suppose $x \in (0,1)^d$ is a fixed interior point of the support of $X_i$,
  and also that $\lambda \gtrsim (\log n)^3$,
  $n / \lambda^d \to \infty$, and $B \gtrsim (\log n)^d$.
\end{assumption}

The requirement that $n / \lambda^d \to \infty$ ensures that the number of data
points
$N_b(x)$ falling inside a typical Mondrian cell $T_b(x)$,
and hence the effective sample size of the Mondrian random forest,
diverges in large samples.
Assumption~\ref{ass:estimator} also implies that
the size of the forest $B$ should grow with $n$.
In our theory, the forest size $B$ plays three roles.
Firstly, we assume $B \gtrsim (\log n)^d$
to ensure sufficient averaging across independent trees; this
``large forest'' condition is used in the central limit theorem proof
(Theorems~\ref{thm:clt} and~\ref{thm:clt_debiased}) to overcome
heavy tail issues from very small Mondrian cells, and not to control
bias. Secondly, the Berry--Esseen bounds for our centered estimators
(Theorems~\ref{thm:clt} and~\ref{thm:clt_debiased}) include a $1/B$
term, so increasing $B$ improves the Gaussian approximation. Thirdly,
for feasible inference and mean squared error optimality,
$B$ must be large enough that the finite-forest squared bias terms
(e.g., $1/(\lambda^{2(1\wedge\beta)}B)$; see Lemma~\ref{lem:bias} and
Theorems~\ref{thm:rate}, \ref{thm:confidence}, and~\ref{thm:minimax})
are negligible relative to the leading ``variance'' term of the
infinite-forest estimator, which scales as $\lambda^d/n$
(Lemma~\ref{lem:variance}), necessitating that $B$ grows
as a positive fractional power of $n$.
Thus, for the purpose of both meeting our
statistical assumptions and mitigating the computational burden, we
recommend choosing $B \asymp \sqrt n$ for Mondrian random forests;
selecting $B$ for our debiased estimator requires a different set of
conditions (see Sections~\ref{sec:debiased}
and~\ref{sec:implementation}). Large forests usually do not present
significant computational challenges in practice as the ensemble
estimator is easily parallelizable over the trees (see
Section~\ref{sec:compute} for more discussion). We will emphasize
explicitly where the ``large forest'' condition is important for our theory.

\section{Inference with Mondrian random forests}%
\label{sec:inference}

Our analysis begins with a standard conditional bias--variance decomposition
for the Mondrian random forest estimator:
\begin{align}
  \nonumber
  \hat\mu(x) - \mu(x)
  &=
  \Big( \hat\mu(x) - \E \big[ \hat \mu(x) \mid \bX, \bT \big]\Big)
  + \Big( \E \big[ \hat \mu(x) \mid \bX, \bT \big] - \mu(x)\Big) \\
  \label{eq:bias_variance_bias}
  &=
  \frac{1}{B}
  \sum_{b=1}^B
  \frac{\sum_{i=1}^n \big(\mu(X_i) - \mu(x)\big) \,
  \I\big\{ X_i \in T_b(x) \big\}}
  {N_b(x)} \\
  &\qquad+
  \label{eq:bias_variance_variance}
  \frac{1}{B}
  \sum_{b=1}^B
  \frac{\sum_{i=1}^n \varepsilon_i \, \I\big\{ X_i \in T_b(x) \big\}}
  {N_b(x)}.
\end{align}

Our approach to estimation and inference is as follows. Firstly, we
precisely characterize the probability limit of the ``bias'' term
\eqref{eq:bias_variance_bias}, and compute the second
conditional moment of the
``variance'' term \eqref{eq:bias_variance_variance}. This allows us to
understand the bias--variance trade-off, and to derive upper bounds on
the rate of convergence for the Mondrian random forest point estimator.

Secondly, we provide a central limit theorem
for the ``variance'' term \eqref{eq:bias_variance_variance}.
By ensuring that the standard deviation dominates the conditional bias,
we may conclude that a corresponding central limit theorem
holds for the Mondrian random forest \eqref{eq:estimator}.
With an appropriate estimator for the variance,
we then establish procedures for valid and feasible statistical inference
on the unknown regression function $\mu(x)$.

\subsection{Bias and variance characterizations}%

We begin with \eqref{eq:bias_variance_bias}, which captures the bias of the
Mondrian random forest estimator conditional on the covariates $\bX$ and the
forest $\bT$. The next lemma demonstrates that this conditional
bias converges in $L^2$ at a certain rate, and provides a
precise characterization of the resulting non-random limiting bias.

\begin{lemma}[Bias]%
  \label{lem:bias}
  Suppose Assumptions~\ref{ass:data} and~\ref{ass:estimator} hold.
  For each $1 \leq r \leq \lfloor \flbeta / 2 \rfloor$ there exists
  $B_r(x) \in \R$, which is a function of
  the derivatives of $f$ and $\mu$ at $x$ up to order $2r$,
  with
  \begin{align}
    \label{eq:bias}
    \E \left[
      \Bigg(
        \E \bigl[ \hat \mu(x) \mid \bX, \bT \bigr]
        - \mu(x)
        - \sum_{r=1}^{\lfloor \flbeta / 2 \rfloor}
        \frac{B_r(x)}{\lambda^{2r}}
      \Bigg)^2
    \right]
    &\lesssim
    \frac{1}{\lambda^{2\beta}}
    + \frac{1}{\lambda^{2(1 \wedge \beta)}  B}
    + \frac{1}{\lambda^{2(1 \wedge \beta)}}
    \frac{\lambda^d}{n}.
  \end{align}
  Whenever $\beta > 2$, the leading bias is the quadratic term
  \begin{align*}
    \frac{B_1(x)}{\lambda^2}
    &=
    \frac{1}{2 \lambda^2}
    \sum_{j=1}^d \frac{\partial^2 \mu(x)}{\partial x_j^2}
    + \frac{1}{2 \lambda^2}
    \frac{1}{f(x)}
    \sum_{j=1}^{d} \frac{\partial \mu(x)}{\partial x_j}
    \frac{\partial f(x)}{\partial x_j}.
  \end{align*}
  If $X_i \sim \Unif\big([0,1]^d\big)$ then $f(x) = 1$,
  and using multi-index notation we have
  \begin{align*}
    \frac{B_r(x)}{\lambda^{2r}}
    &=
    \frac{1}{\lambda^{2r}}
    \sum_{|\nu|=r}
    \partial^{2 \nu} \mu(x)
    \prod_{j=1}^d
    \frac{1}{\nu_j + 1}.
  \end{align*}
\end{lemma}

The bias characterization in Lemma~\ref{lem:bias} incorporates some high-degree
polynomial terms in the lifetime parameter $\lambda$ which for now may seem
ignorable. The magnitude of the bias is determined by the leading term
in \eqref{eq:bias}, typically of order $1/\lambda^2$ whenever $\beta \geq 2$.
This suffices for ensuring a negligible contribution from the bias with an
appropriate choice of lifetime. However, the advantage of specifying
higher-order terms will become apparent in Section~\ref{sec:debiased}, where we
construct a debiased Mondrian random forest estimator, directly targeting and
annihilating the higher-order terms in order to furnish superior estimation and
inference properties. We also demonstrate numerically the detrimental role of
bias in estimation and inference in Section~\ref{sec:implementation}.

In Lemma~\ref{lem:bias} we give some explicit examples
of calculating the limiting bias when $\beta > 2$
or $X_i$ are uniformly distributed.
The general form of $B_r(x)$ is provided in
\iftoggle{journal}{the supplementary material
\citep{cattaneo2025mondriansupplement}}{Appendix~\ref{sec:proofs}}
but is somewhat unwieldy except
in specific situations.
Nonetheless, the most important properties are that $B_r(x)$
are non-random and
do not depend on the lifetime $\lambda$;
these are crucial features for our debiasing procedure given in
Section~\ref{sec:debiased}.
If the forest size $B$ does not diverge to infinity
then we suffer the first-order conditional bias term
$1 / \big(\lambda^{1 \wedge \beta} \sqrt B\big)$.
This phenomenon was explained by \citet{mourtada2020minimax},
who noted that it allows individual Mondrian trees ($B=1$) to achieve
minimax optimality in integrated mean squared error
only when $\beta \in (0, 1]$.
In contrast, large forests remove this first-order bias
through ensemble averaging
and as such are optimal for all $\beta \in (0, 2]$.

We now turn to \eqref{eq:bias_variance_variance}, which captures the stochastic
part of the Mondrian random forest. Lemma~\ref{lem:variance} determines the
probability limit of the scaled conditional variance of this term, alongside
its $L^2$ convergence rate. First, define
\begin{align*}
  \tilde \Sigma(x)
  &= \frac{n}{\lambda^d}
  \Var\big[
    \hat \mu(x) \mid \bX, \bT
  \big]
  &&\text{and}
  &\Sigma(x)
  &=
  \frac{\sigma^2(x)}{f(x)}
  \left(
    \frac{4 - 4 \log 2}{3 }
  \right)^d.
\end{align*}

\begin{lemma}[Variance]
  \label{lem:variance}
  Suppose Assumptions~\ref{ass:data} and~\ref{ass:estimator} hold. Then
  \begin{align*}
    \E \left[
      \big(
        \tilde \Sigma(x)
        - \Sigma(x)
      \big)^2
    \right]
    &\lesssim
    \frac{\lambda^d}{n}
    + \frac{1}{B}
    + \frac{1}{\lambda^{2(1 \wedge \betaf \wedge \betasigma)}}.
  \end{align*}
\end{lemma}
As $n/\lambda^d \to \infty$, $B \to \infty$ and
$\lambda \to \infty$ by Assumption~\ref{ass:estimator},
it follows from Lemma~\ref{lem:variance} that
\begin{align}
  \label{eq:variance_bound}
  \E\Bigl[
    \Var\bigl[\hat\mu(x) \mid \bX, \bT\bigr]
  \Bigr]
  &=
  \frac{\lambda^d}{n}
  \E\bigl[\tilde \Sigma(x)\bigr]
  \lesssim
  \frac{\lambda^d}{n}
  \biggl(
    \Sigma(x)
    + \sqrt{\frac{\lambda^d}{n}}
    + \frac{1}{\sqrt B}
    + \frac{1}{\lambda^{1 \wedge \betaf \wedge \betasigma}}
  \biggr)
  \lesssim
  \frac{\lambda^d}{n}.
\end{align}
An upper bound on the $L^2$ rate of convergence of the Mondrian random forest
estimator can therefore
be deduced from the bias--variance decomposition,
Lemma~\ref{lem:bias}, and \eqref{eq:variance_bound}.
This rate of convergence depends on the sequence of lifetime parameters
$\lambda$;
for optimal point estimation, we may balance the
contributions from the bias and from the standard deviation by ensuring that
$1/\lambda^{2 \wedge \beta}
+ 1/\big(\lambda^{1 \wedge \beta} \sqrt B\big)
\asymp \sqrt{\lambda^d / n}$, or equivalently if
$\lambda \asymp n^{\frac{1}{d + 2(2 \wedge \beta)}}$
and $B \gtrsim n^{\frac{2(2 \wedge \beta) - 2(1 \wedge \beta)}
{d + 2(2 \wedge \beta)}}$.
We formalize these deductions in Theorem~\ref{thm:rate}
and note that they imply that the Mondrian random forest is
rate-minimax-optimal \citep{stone1982optimal}
in pointwise mean squared error
for $\beta$-H{\"o}lder functions with $\beta \in (0,2]$;
a corresponding result for integrated mean squared error
was provided by \citet[Theorem~2]{mourtada2020minimax}.

\begin{theorem}[Mean squared error]%
  \label{thm:rate}
  Suppose Assumptions~\ref{ass:data} and~\ref{ass:estimator} hold. Then
  \begin{align*}
    \E \left[ \big( \hat \mu(x) - \mu(x) \big)^2 \right]
    \lesssim
    \frac{\lambda^d}{n}
    + \frac{1}{\lambda^{2(2 \wedge \beta)}}
    + \frac{1}{\lambda^{2(1 \wedge \beta)} B}.
  \end{align*}
  If further
  $\lambda \asymp n^{\frac{1}{d + 2(2 \wedge \beta)}}$
  and $B \gtrsim n^{\frac{2(2 \wedge \beta) - 2(1 \wedge \beta)}
  {d + 2(2 \wedge \beta)}}$, then
  \begin{align*}
    \E \left[ \big( \hat \mu(x) - \mu(x) \big)^2 \right]
    \lesssim
    n^{- \frac{2 (2 \wedge \beta)}{d + 2(2 \wedge \beta)}}.
  \end{align*}
\end{theorem}

We take this opportunity to contrast Mondrian random forests with classical
nonparametric local smoothing methods.
For example, the lifetime $\lambda$ plays a similar role to the inverse
bandwidth for kernel smoothing
as it determines both the effective sample size $n / \lambda^d$
and the scale of localization $1/\lambda$, and thus also the
associated rate of convergence. Likewise, $1/\lambda$ controls the diameter of
a typical cell in Mondrian partition-based smoothing.
However, due to the Mondrian process
construction, some cells are typically ``too small''
(equivalent to an insufficiently large bandwidth)
to give an appropriate effective sample size.
In the same manner, classical methods based on non-random partitioning
such as spline estimators
typically impose a quasi-uniformity assumption to ensure
all the cells are of comparable size \citep{huang2003local,cattaneo2020large},
a property which does not hold
for the Mondrian process (not even with high probability).

\subsection{Central limit theorem}

Having discussed the point estimation properties of the
Mondrian random forest estimator, we present a central limit theorem
which forms the core of our methodology for performing statistical
inference. As well as establishing asymptotic normality of
the appropriately centered and scaled estimator, we also provide a rate
of convergence in terms of a Berry--Esseen-style bound on the
Kolmogorov--Smirnov distance from the normal distribution.
In addition to precisely quantifying the quality of the Gaussian distributional
approximation, this allows us to obtain explicit bounds on the
coverage error of feasible confidence intervals.

Before stating the theorem,
we highlight some of the challenges involved in establishing such a result.
At first glance, the summands in \eqref{eq:bias_variance_variance}
appear independent over $1 \leq i \leq n$, conditional
on the forest $\bT$, depending only on $X_i$ and $\varepsilon_i$.
However, the $N_b(x)$ appearing in the denominator depends on all $X_i$
simultaneously, violating this independence assumption
and rendering classical central limit theorems inapplicable.
A natural preliminary attempt to resolve this issue is to observe that
\begin{align*}
  N_b(x)&=
  \sum_{i=1}^{n} \I\big\{X_i \in T_b(x)\big\}
  \approx
  n \, \P \big( X_i \in T_b(x) \mid T_b \big)
  \approx
  n f(x) |T_b(x)|
\end{align*}
with high probability.
One could attempt to use this by
approximating the estimator with an average of
i.i.d.\ random variables, or by
employing a central limit theorem
conditional on $\bX$ and $\bT$.
However, such an approach fails because
$\E \left[ 1 / |T_b(x)|^2 \right] = \infty$;
the possible existence of small cells causes the law of the
inverse cell volume to have heavy tails.
For similar reasons, attempts to directly establish a central limit theorem
based on $2 + \delta$ moments, such as the classical Lyapunov central limit
theorem,
are ineffective.

We circumvent these problems by directly analyzing
$\I\{N_b(x) \geq 1\} / N_b(x)$.
We establish concentration properties
for this non-linear function of $X_i$ via the Efron--Stein inequality
\citep[Section 3.1]{boucheron2013concentration}
along with a sequence of delicate preliminary lemmas regarding
inverse moments of truncated (conditional) binomial random variables.
In particular, we show that
$\E \left[ \I \{N_b(x) \geq 1\} / N_b(x) \right]
\lesssim \lambda^d / n$ and
$\E \left[ \I \{N_b(x) \geq 1\} / N_b(x)^2 \right]
\lesssim \lambda^{2d} (\log n)^d / n^2$.
Asymptotic normality is then established by a careful application of
a Berry--Esseen theorem \citep{Petrov_1995_Book}
conditional on $(\bX, \bT)$.
\iftoggle{journal}{The supplementary material
\citep{cattaneo2025mondriansupplement}}{Appendix~\ref{sec:proofs}}
provides all the technical details.

The following theorem gives our
Berry--Esseen-type central limit theorem for
the centered (zero mean conditional on the covariates and the trees)
``variance'' term from \eqref{eq:bias_variance_variance},
scaled and standardized by its conditional variance $\tilde \Sigma(x)$.
Note that on the event $\tilde\Sigma(x) = 0$, we also have
$\hat\mu(x) = 0$ and $\E \left[ \hat\mu(x) \mid \bX, \bT \right] = 0$,
so continue to define $0 / 0 = 0$.

\begin{theorem}[Central limit theorem]
  \label{thm:clt}
  If Assumptions~\ref{ass:data}
  and~\ref{ass:estimator} hold, and
  $\E[|Y_i|^{2+\delta} \mid X_i ]$ is bounded almost surely
  with $\delta > 0$, then
  \begin{align}
    \label{eq:berry_esseen}
    \sup_{t\in\mathbb{R}}
    \left|
    \P \left(
      \sqrt{\frac{n}{\lambda^d}}
      \frac{\hat \mu(x) - \E\left[ \hat \mu(x) \mid \bX, \bT \right]}
      {\sqrt{\smash[b]{\tilde \Sigma(x)}}}
      \leq t
    \right)
    - \Phi(t)
    \right|
    &\lesssim
    \left( \frac{\lambda^d}{n} \right)^{\frac{1 \wedge \delta}{2}}
    + \frac{1}{B}.
  \end{align}
\end{theorem}

We make some remarks on Theorem~\ref{thm:clt}.
Firstly, since $n/\lambda^d$ is the effective sample size and
$Y_i$ has only $2 + \delta$ finite moments,
the first term in \eqref{eq:berry_esseen} is likely to be unimprovable
\citep[Theorem~3.4.1]{ibragimov1975independent}. In particular,
we attain the classical Berry--Esseen rate when
$\E[|Y_i|^3 \mid \bX]$ is bounded and $B \gtrsim \sqrt{n / \lambda^d}$.

The condition of $B \gtrsim (\log n)^d$ is central to our proof of
Theorem~\ref{thm:clt},
ensuring sufficient ``mixing'' of different Mondrian cells to escape the
heavy-tailed phenomenon detailed in the preceding discussion.
For concreteness, the large forest condition allows us to deal
with expressions such as
$\E \left[ 1 / (|T_b(x)| |T_{b'}(x)|) \right] =
\E \left[ 1 / |T_b(x)| \right] \E \left[ 1 / |T_{b'}(x)| \right]
\approx \lambda^{2d} < \infty$
where $b \neq b'$, by independence of the trees, rather than
the ``no ensembling'' single tree analog
$\E \left[ 1 / |T_b(x)|^2 \right] = \infty$.

Nonetheless, it is not clear whether the $1/B$ term is
strictly necessary in \eqref{eq:berry_esseen} or if it is an
artifact of the proof.
When $B$ is bounded, $\tilde\Sigma(x)$ remains random in the limit,
and in fact it is not difficult to show that in this regime we have
that $\E\big[\{\tilde \Sigma(x)\}^2\big] \geq (\log n)^d$, which diverges
(cf.,\ Lemma~\ref{lem:variance}).
While these mildly pathological properties may not necessarily
render the central limit theorem invalid, they certainly highlight some
issues associated with inference based on a single tree or a small forest.

Theorem~\ref{thm:clt} applies only to the centered Mondrian random forest
estimator;
in order for it to be useful in a feasible inference setting, we must
combine it with methods for controlling the conditional bias
(see Lemma~\ref{lem:bias}).
In Section~\ref{sec:debiased} we will show
how the estimator can be debiased, giving weaker lifetime conditions for
inference, improved rates of convergence, and superior coverage
guarantees, whenever additional smoothness is available.

\subsection{Confidence intervals}

We demonstrate how to use our previous results to construct
valid confidence intervals for the regression function $\mu(x)$.
To do this, there are two preliminary issues which must be resolved.
Firstly, the Berry--Esseen central limit theorem presented in
Theorem~\ref{thm:clt} is stated for the Mondrian random forest
estimator $\hat\mu(x)$
centered at its conditional expectation
$\E \left[\hat \mu(x) \mid \bX, \bT \right]$,
rather than at the true value $\mu(x)$. As such,
we use Lemma~\ref{lem:bias} to ensure that the bias
$\E \left[\hat \mu(x) \mid \bX, \bT \right] - \mu(x)$ is taken into
account when establishing procedures for inference.
Specifically, the bias should shrink faster than the standard deviation;
this requires
$1 / \lambda^{2 \wedge \beta}
+ 1 / \big(\lambda^{1 \wedge \beta} \sqrt B\big)
\ll \sqrt{\lambda^d / n}$,
which is satisfied by imposing the restrictions
$\lambda \gg n^{\frac{1}{d + 2(2 \wedge \beta)}}$
and $B \gg n^{\frac{2(2 \wedge \beta) - 2(1 \wedge \beta)}
{d + 2(2 \wedge \beta)}}$
on the lifetime $\lambda$ and forest size $B$.

The second issue is that
the variances $\tilde\Sigma(x)$ and $\Sigma(x)$
depend on the unknown quantities $\sigma^2(x)$ and $f(x)$.
To conduct feasible inference, we must therefore provide
a consistent variance estimator. To this end, define
\begin{align}
  \label{eq:sigma2_hat}
  \hat\sigma^2(x)
  &=
  \frac{1}{B}
  \sum_{b=1}^{B}
  \sum_{i=1}^n
  \frac{\big(Y_i - \hat \mu(x)\big)^2 \, \I\{X_i \in T_b(x)\}}
  {N_b(x)}, \\
  \nonumber
  \hat\Sigma(x)
  &=
  \hat\sigma^2(x)
  \frac{n}{\lambda^d}
  \sum_{i=1}^n
  \left(
    \frac{1}{B}
    \sum_{b=1}^B
    \frac{\I\{X_i \in T_b(x)\}}{N_b(x)}
  \right)^2.
\end{align}
\begin{lemma}[Variance estimation]%
  \label{lem:variance_estimation}
  If Assumptions~\ref{ass:data}
  and~\ref{ass:estimator} hold, and
  $\E[|Y_i|^{2+\delta} \mid X_i ]$ is bounded almost surely with $\delta > 0$,
  then
  \begin{align*}
    \left(
      \E \left[
        \big|
        \hat\Sigma(x)
        - \Sigma(x)
        \big|^{\frac{2 - \I\{\delta < 2\}}{2}}
      \right]
    \right)^{\frac{2}{2 - \I\{\delta < 2\}}}
    &\lesssim
    \left(\frac{\lambda^d}{n}\right)^{\frac{1}{2}
    - \frac{\I\{\delta < 2\}}{2 + \delta}}
    + \frac{1}{\sqrt B}
    + \frac{1}{\lambda^{1 \wedge \betamu \wedge \betaf \wedge \betasigma}}.
  \end{align*}
\end{lemma}

For a confidence level $\alpha \in (0, 1)$,
Theorem~\ref{thm:confidence} shows how to construct an asymptotically valid
$100 (1-\alpha)\%$ confidence interval
for the regression function $\mu(x)$.
The restrictions on the lifetime $\lambda$ and forest size $B$
are the same as those previously discussed,
and an explicit upper bound on the coverage error rate is provided.
Define the interval estimator
\begin{align*}
  \CI(x) = \left[
    \hat \mu(x)
    - \sqrt{\frac{\lambda^d}{n}} \hat \Sigma(x)^{1/2} \,
    q_{1 - \alpha / 2}, \
    \hat \mu(x)
    - \sqrt{\frac{\lambda^d}{n}} \hat \Sigma(x)^{1/2} \,
    q_{\alpha / 2}
  \right].
\end{align*}
\begin{theorem}[Confidence intervals]%
  \label{thm:confidence}
  If Assumptions~\ref{ass:data}
  and~\ref{ass:estimator} hold, and
  $\E[|Y_i|^{2+\delta} \mid X_i ]$ is bounded almost surely
  with $\delta > 0$, then
  \begin{align*}
    &\left| \P \big( \mu(x) \in \CI(x) \big) - (1 - \alpha) \right| \\
    &\quad\lesssim
    \frac{n}{\lambda^d} \frac{1}{\lambda^{2(2 \wedge \beta)}}
    + \Biggl(
      \left(
        \frac{\lambda^d}{n}
      \right)^{1 - \frac{2 \, \I\{\delta < 2\}}{2 + \delta}}
      + \frac{1}{B}
      + \frac{1}
      {\lambda^{2(1 \wedge \betamu \wedge \betaf \wedge \betasigma)}}
      + \frac{n}{\lambda^d}
      \frac{1}{\lambda^{2(1 \wedge \beta)} B}
    \Biggr)^{\frac{1}{5 + 2 \, \I\{\delta < 2\}}}.
  \end{align*}
\end{theorem}

When coupled with an appropriate lifetime selection method
(see Section~\ref{sec:implementation}),
Theorem~\ref{thm:confidence} gives a feasible procedure
for uncertainty quantification in Mondrian random forests.
Our procedure requires no adjustment of the original Mondrian random
forest estimator beyond ensuring that the bias is negligible,
and in particular does not rely on sample splitting.
The construction of confidence intervals is just one corollary
of the result given in Theorem~\ref{thm:clt};
other applications include hypothesis testing based on the value
of $\mu(x)$ at a design point $x$ by inversion of the confidence interval,
as well as specification testing by comparison with
a $\sqrt{n}$-consistent parametric regression estimator.
The construction of simultaneous confidence
intervals for finitely many points $x_1, \ldots, x_D$
can be accomplished either using standard multiple testing corrections
or by first establishing a multivariate
central limit theorem using the Cram{\'e}r--Wold device
and formulating a consistent variance matrix estimator.

\section{Debiased Mondrian random forests}%
\label{sec:debiased}

We give our next main contribution:
a novel variant of the Mondrian random forest estimator
that corrects for higher-order bias with an approach based on
generalized jackknifing \citep{schucany1977improvement}.
This estimator retains the basic form of a Mondrian random forest
in the sense that it is a linear combination of Mondrian tree estimators,
but in this section we allow for non-identical linear coefficients,
some of which may be negative, and for differing
lifetime parameters across the trees.
Since the basic Mondrian random forest estimator is a special
case of this more general debiased version,
we will discuss only the latter throughout the rest of the paper.

We use the explicit form of the bias given in Lemma~\ref{lem:bias}
to construct the debiased Mondrian forest estimator as follows,
letting $J \geq 0$ be the bias correction order.
With $J=0$ we preserve the original Mondrian random forest,
with $J=1$ we remove second-order bias,
and with $J = \lfloor\flbeta / 2 \rfloor$ we remove bias terms
up to and including order $2 \lfloor\flbeta / 2 \rfloor$,
giving the maximum possible bias reduction achievable
in the H{\"o}lder class $\cH^\beta$ \citep{stone1982optimal}.
As such, only bias terms of order $1/\lambda^\beta$ will remain.

For $0 \leq r \leq J$, let $\hat \mu_r(x)$ be a Mondrian forest estimator
based on the trees
$T_{b r} \sim \cM\big([0,1]^d, \lambda_r \big)$
for $1 \leq b \leq B$, where $\lambda_r = a_r \lambda$ for some $a_r > 0$
and $\lambda > 0$.
We write $\bT$ to denote the collection of all the trees,
and suppose they are mutually independent.
We find values of $a_r$ along with coefficients $\omega_r \in \R$ which
annihilate the leading $J$ bias terms of the
debiased Mondrian random forest estimator
\begin{align}
  \label{eq:debiased}
  \hat \mu_\rd(x)
  &=
  \sum_{r=0}^J
  \omega_r \hat \mu_r(x)
  = \sum_{r=0}^{J}
  \omega_r
  \frac{1}{B}
  \sum_{b=1}^B
  \frac{\sum_{i=1}^n Y_i \,
  \I\big\{ X_i \in T_{b r}(x) \big\}}
  {N_{b r}(x)}.
\end{align}
This ensemble estimator retains the ``forest'' structure
of the original estimators,
but with varying lifetime parameters $\lambda_r$ and coefficients $\omega_r$.
Thus, referring to \eqref{eq:bias}, we desire
\begin{align*}
  \sum_{r=0}^{J}
  \omega_r
  \left(
    \mu(x)
    + \sum_{s=1}^{J}
    \frac{B_{s}(x)}{a_r^{2s} \lambda^{2s}}
  \right)
  &= \mu(x)
\end{align*}
for all $\lambda$, or equivalently the system of linear equations
$\sum_{r=0}^{J} \omega_r = 1$
and $\sum_{r=0}^{J} \omega_r a_r^{-2s} = 0$
for each $1 \leq s \leq J$.
We solve these as follows.
Define the $(J+1) \times (J+1)$ Vandermonde matrix
$A_{r s} = a_{r-1}^{2-2s}$,
let $\omega = (\omega_0, \ldots, \omega_J)^\T \in \R^{J+1}$
and take $e_0 = (1, 0, \ldots, 0)^\T \in \R^{J+1}$.
Then a solution for the debiasing coefficients is given by
$\omega = A^{-1} e_0$ whenever $A$ is non-singular.
In practice we can take $a_r$ to be a fixed geometric or arithmetic sequence
to ensure this is the case,
appealing to the Vandermonde determinant formula:
$\det A = \prod_{0 \leq r < s \leq J} (a_r^{-2} - a_s^{-2})
\neq 0$ whenever $a_r$ are distinct.
For example, one could set
$a_r = (1 + \gamma)^r$ or $a_r = 1 + \gamma r$ for some $\gamma > 0$.
Because we assume $\beta$, and therefore the choice of $J$, do not
depend on $n$, there is no need to quantify
the invertibility of $A$ by, for example, bounding its eigenvalues
away from zero as a function of $J$ and the choice of $a_r$.

The debiased Mondrian random forest estimator defined in \eqref{eq:debiased}
is a linear combination of standard Mondrian random forests,
and as such contains both a sum over $0 \leq r \leq J$,
representing the debiasing procedure,
and a sum over $1 \leq b \leq B$, representing the forest averaging.
We have been interpreting this estimator as a debiased
version of the standard Mondrian random forest given in \eqref{eq:estimator},
but it is equally valid to swap the order of these sums.
This gives rise to an alternative point of view:
we replace each Mondrian random tree with a ``debiased'' version,
and then take a forest of such modified trees.
This perspective is perhaps more in line with existing
techniques for constructing
randomized ensembles, where the outermost operation
represents a $B$-fold average
of randomized base learners (not necessarily locally constant decision trees),
each of which has a small bias component
\citep{caruana2004ensemble, zhou2019deep}.

\subsection{Bias and variance characterizations}

In Lemma~\ref{lem:bias_debiased}
we verify that this debiasing procedure does indeed annihilate the desired
bias terms; it is a direct consequence of Lemma~\ref{lem:bias} and
the construction of the debiased Mondrian random forest estimator
$\hat\mu_\rd(x)$.

\begin{lemma}[Bias of the debiased estimator]%
  \label{lem:bias_debiased}
  Suppose Assumptions~\ref{ass:data} and~\ref{ass:estimator} hold.
  Then in the notation of Lemma~\ref{lem:bias} and with
  $\bar\omega = \sum_{r=0}^J \omega_r a_r^{-2J - 2}$,
  \begin{align*}
    &\E \left[
      \left(
        \E \big[ \hat \mu_\rd(x) \mid \bX, \bT \big]
        - \mu(x)
        - \I\{2J+2 < \beta \}
        \frac{\bar\omega B_{J+1}(x)}{\lambda^{2J + 2}}
      \right)^2
    \right] \\
    &\qquad\lesssim
    \frac{1}{\lambda^{2((2J + 4) \wedge \beta)}}
    + \frac{1}{\lambda^{2(1 \wedge \beta)} B}
    + \frac{1}{\lambda^{2(1 \wedge \beta)}}
    \frac{\lambda^d}{n}.
  \end{align*}
\end{lemma}

Lemma~\ref{lem:bias_debiased} has the following consequence:
the leading bias term is characterized in terms of
$B_{J+1}(x)$ whenever $J < \beta/2 - 1$,
or equivalently $J < \lfloor \flbeta/2 \rfloor$,
that is, the debiasing order
$J$ does not exhaust the H{\"o}lder smoothness $\beta$.
If this condition does not hold, then the estimator is
fully debiased; the resulting leading bias
term is bounded above by $1/\lambda^\beta$ up to constants
but its form is left unspecified.

The following lemma controls the
variance of the debiased Mondrian random forest estimator.
With $\ell_{r r'} = 2 a_r ( 1 - a_r
\log(1 + a_{r'} / a_{r}) / a_{r'} ) / 3$, define
\begin{align*}
  \tilde \Sigma_\rd(x)
  &= \frac{n}{\lambda^d}
  \Var\big[
    \hat \mu_\rd(x) \mid \bX, \bT
  \big]
  &&\text{and}
  &\Sigma_\rd(x)
  &=
  \frac{\sigma^2(x)}{f(x)}
  \sum_{r=0}^{J}
  \sum_{r'=0}^{J}
  \omega_r
  \omega_{r'}
  \left( \ell_{r r'} + \ell_{r' r} \right)^d.
\end{align*}
\begin{lemma}[Variance of the debiased estimator]
  \label{lem:variance_debiased}
  Supposing Assumptions~\ref{ass:data} and~\ref{ass:estimator} hold,
  \begin{align*}
    \E \left[
      \big(
        \tilde \Sigma_\rd(x)
        - \Sigma_\rd(x)
      \big)^2
    \right]
    &\lesssim
    \frac{\lambda^d}{n}
    + \frac{1}{B}
    + \frac{1}{\lambda^{2(1 \wedge \betaf \wedge \betasigma)}}.
  \end{align*}
\end{lemma}

As in \eqref{eq:variance_bound}, it follows from
Lemma~\ref{lem:variance_debiased} that
\begin{equation}
  \label{eq:variance_bound_debiased}
  \E\Bigl[
    \Var\bigl[\hat\mu_\rd(x) \mid \bX, \bT\bigr]
  \Bigr]
  \lesssim
  \frac{\lambda^d}{n}.
\end{equation}

\subsection{Minimax optimality}

Our next main result, Theorem~\ref{thm:minimax}, shows that
when using an appropriate sequence of lifetime parameters $\lambda$,
the debiased Mondrian random forest estimator
achieves, up to constants, the minimax-optimal rate of convergence
for pointwise mean squared error
estimation of a $d$-dimensional regression function $\mu \in \cH^\beta$
\citep{stone1982optimal}.
This result holds for all $d \geq 1$ and all $\beta > 0$,
complementing a previous result
(see Theorem~\ref{thm:rate})
established only for $\beta \in (0, 2]$
and in integrated mean squared error
by \citet{mourtada2020minimax}.

\begin{theorem}[Mean squared error of the debiased estimator]%
  \label{thm:minimax}
  Grant Assumptions~\ref{ass:data} and~\ref{ass:estimator}. Then
  \begin{align*}
    \E \left[
      \big(
        \hat \mu_\rd(x)
        - \mu(x)
      \big)^2
    \right]
    &\lesssim
    \frac{\lambda^d}{n}
    + \frac{1}{\lambda^{2((2 J + 2) \wedge \beta)}}
    + \frac{1}{\lambda^{2(1 \wedge \beta)} B}.
  \end{align*}
  Thus with $J \geq \lfloor \flbeta / 2 \rfloor$,
  $\lambda \asymp n^{\frac{1}{d + 2 \beta}}$
  and $B \gtrsim n^{\frac{2 \beta - 2 (1 \wedge \beta)}{d + 2 \beta}}$,
  we have
  \begin{align*}
    \E \left[
      \big(
        \hat \mu_\rd(x)
        - \mu(x)
      \big)^2
    \right]
    &\lesssim
    n^{- \frac{2 \beta}{d + 2 \beta}}.
  \end{align*}
\end{theorem}

The sequence of lifetime parameters $\lambda$ required in
Theorem~\ref{thm:minimax} is
chosen to balance the bias and standard deviation bounds implied by
Lemma~\ref{lem:bias_debiased} and \eqref{eq:variance_bound_debiased}
respectively, in order to minimize the pointwise mean squared error.
While selecting an optimal debiasing order $J$ needs only
knowledge of an upper bound on the smoothness $\beta$,
choosing an optimal sequence of $\lambda$ values
does assume that $\beta$ is known a priori.
The problem of adapting to $\beta$ from data is beyond the scope of this paper;
we provide some practical advice
for tuning parameter selection
in Section~\ref{sec:implementation}.

Theorem~\ref{thm:minimax} complements the minimaxity results
proven by \citet{mourtada2020minimax} for
Mondrian trees (with $\beta \leq 1$) and for Mondrian random forests
(with $\beta \leq 2$), with one modification:
our version is stated in pointwise rather than integrated
mean squared error.
This is because our debiasing procedure is designed to handle
interior smoothing bias and as such does not provide any correction
for boundary bias.
We leave the development of such boundary corrections to future work,
but constructions similar to higher-order boundary-correcting
kernels should be possible.
If the region of integration is a compact set
in the interior of $[0,1]^d$
then we do obtain an optimal integrated mean squared error bound:
if $a \in (0, 1/2)$ is fixed then under the
same conditions as Theorem~\ref{thm:minimax},
with appropriate tuning of $\lambda$ and $B$,
\begin{align*}
  \E \left[
    \int_{[a, 1-a]^d}
    \big(
      \hat \mu_\rd(x)
      - \mu(x)
    \big)^2
    \diffi x
  \right]
  &\lesssim
  \frac{\lambda^d}{n}
  + \frac{1}{\lambda^{2\beta}}
  + \frac{1}{\lambda^{2(1 \wedge \beta)} B}
  \lesssim
  n^{-\frac{2 \beta}{d + 2 \beta}}.
\end{align*}

\subsection{Central limit theorem}

In Theorem~\ref{thm:clt_debiased}, we verify that a central
limit theorem holds for the debiased
random forest estimator $\hat\mu_\rd(x)$.
The strategy and challenges associated with proving
Theorem~\ref{thm:clt_debiased} are identical to those discussed earlier
surrounding Theorem~\ref{thm:clt}.
In fact in \iftoggle{journal}{the supplementary material
\citep{cattaneo2025mondriansupplement}}{Appendix~\ref{sec:proofs}}
we provide a direct proof only for Theorem~\ref{thm:clt_debiased}
and deduce Theorem~\ref{thm:clt} as a special case.
Again on the event $\tilde\Sigma_\rd(x) = 0$, we also have
$\hat\mu_\rd(x) = 0$ and $\E \left[ \hat\mu_\rd(x) \mid \bX, \bT \right] = 0$,
so we take $0/0=0$.

\begin{theorem}[Central limit theorem with debiasing]
  \label{thm:clt_debiased}
  Suppose Assumptions~\ref{ass:data}
  and~\ref{ass:estimator} hold, and
  $\E[|Y_i|^{2+\delta} \mid X_i ]$ is bounded almost surely
  with $\delta > 0$. Then
  \begin{align*}
    \sup_{t\in\mathbb{R}}
    \left|
    \P \left(
      \sqrt{\frac{n}{\lambda^d}}
      \frac{\hat \mu_\rd(x) - \E\left[ \hat \mu_\rd(x) \mid \bX, \bT \right]}
      {\sqrt{\smash[b]{\tilde \Sigma_\rd(x)}}}
      \leq t
    \right)
    - \Phi(t)
    \right|
    &\lesssim
    \left( \frac{\lambda^d}{n} \right)^{\frac{1 \wedge \delta}{2}}
    + \frac{1}{B}.
  \end{align*}
\end{theorem}

\subsection{Confidence intervals}

As before, to conduct valid feasible inference we must ensure that the bias
(now significantly reduced due to our debiasing procedure)
is negligible when compared to the standard deviation
(which is of the same order as before).
We treat here the general ``partial debiasing''
setting where either the debiasing order $J$ or
the H{\"o}lder smoothness $\beta$ may determine the magnitude
of the bias, which is $1 / \lambda^{(2 J + 2) \wedge \beta}$.
For optimal results, one should take
$J \geq \lfloor \flbeta/2 \rfloor$ to ensure total debiasing,
as in Theorem~\ref{thm:minimax}.
We thus require
$1 / \lambda^{(2J+2) \wedge \beta}
+ 1 / \big(\lambda^{1 \wedge \beta} \sqrt B\big)
\ll \sqrt{\lambda^d / n}$,
satisfied by imposing
$\lambda \gg n^{\frac{1}{d + 2 ((2 J + 2) \wedge \beta)}}$
and $B \gg n^{\frac{2 ((2 J + 2) \wedge \beta) - 2 (1 \wedge \beta)}
{d + 2 ((2 J + 2) \wedge \beta)}}$
on the lifetime parameter $\lambda$
and forest size $B$.

Once again, we propose a variance estimator
and show that it is consistent. With $\hat\sigma^2(x)$ as in
\eqref{eq:sigma2_hat} in Section~\ref{sec:inference}, define
\begin{align}
  \label{eq:Sigma_hat_d}
  \hat\Sigma_\rd(x)
  &=
  \hat\sigma^2(x)
  \frac{n}{\lambda^d}
  \sum_{i=1}^n
  \left(
    \sum_{r=0}^J
    \omega_r
    \frac{1}{B}
    \sum_{b=1}^B
    \frac{\I\{X_i \in T_{b r}(x)\}}
    {N_{b r}(x)}
  \right)^2.
\end{align}
\begin{lemma}[Variance estimation for the debiased estimator]%
  \label{lem:variance_estimation_debiased}
  Suppose Assumptions~\ref{ass:data}
  and~\ref{ass:estimator} hold, and
  $\E[|Y_i|^{2 + \delta} \mid X_i ]$ is bounded almost surely
  with $\delta > 0$. Then
  \begin{align*}
    \left(
      \E \left[
        \big|
        \hat\Sigma_\rd(x)
        - \Sigma_\rd(x)
        \big|^{\frac{2 - \I\{\delta < 2\}}{2}}
      \right]
    \right)^{\frac{2}{2 - \I\{\delta < 2\}}}
    &\lesssim
    \left(\frac{\lambda^d}{n}\right)^{\frac{1}{2}
    - \frac{\I\{\delta < 2\}}{2 + \delta}}
    + \frac{1}{\sqrt B}
    + \frac{1}{\lambda^{1 \wedge \betamu \wedge \betaf \wedge \betasigma}}.
  \end{align*}
\end{lemma}

In analogy to Section~\ref{sec:inference},
we now demonstrate the construction of feasible valid confidence
intervals using the debiased Mondrian random forest estimator
in Theorem~\ref{thm:confidence_debiased}. Consider the
debiased $100(1-\alpha)\%$ confidence interval estimator
\begin{align}
  \label{eq:CI_d}
  \CI_\rd(x) = \left[
    \hat \mu_\rd(x)
    - \sqrt{\frac{\lambda^d}{n}} \hat \Sigma_\rd(x)^{1/2}\,
    q_{1 - \alpha / 2}, \
    \hat \mu_\rd(x)
    - \sqrt{\frac{\lambda^d}{n}} \hat \Sigma_\rd(x)^{1/2}\,
    q_{\alpha / 2}
  \right].
\end{align}

\begin{theorem}[Confidence intervals with debiasing]
  \label{thm:confidence_debiased}
  Suppose Assumptions~\ref{ass:data}
  and~\ref{ass:estimator} hold, and
  $\E[|Y_i|^{2+\delta} \mid X_i ]$ is bounded almost surely
  with $\delta > 0$. Then
  \begin{align*}
    &\left| \P \big( \mu(x) \in \CI_\rd(x) \big) - (1 - \alpha) \right| \\
    &\quad\lesssim
    \frac{n}{\lambda^d} \frac{1}{\lambda^{2((2 J + 2) \wedge \beta)}}
    + \Biggl(
      \hspace*{-1mm}
      \left(
        \frac{\lambda^d}{n}
      \right)^{1 - \frac{2 \, \I\{\delta < 2\}}{2 + \delta}}
      \hspace*{-3mm}
      + \frac{1}{B}
      + \frac{1}
      {\lambda^{2(1 \wedge \betamu \wedge \betaf \wedge \betasigma)}}
      + \frac{n}{\lambda^d}
      \frac{1}{\lambda^{2(1 \wedge \beta)} B}
    \Biggr)^{\hspace*{-1mm}\frac{1}{5 + 2 \, \I\{\delta < 2\}}}.
  \end{align*}

\end{theorem}

One important benefit of our debiasing technique is made clear in
Theorem~\ref{thm:confidence_debiased}: the restrictions imposed
on the lifetime parameter $\lambda$ are substantially relaxed,
especially in smooth classes with large $\beta$.
As well as the high-level of benefit of relaxed conditions,
this is also useful for practical selection of appropriate lifetimes
for estimation and inference respectively;
see Section~\ref{sec:implementation} for more details.
Nonetheless, such improvements do not come without concession.
The limiting variance of the debiased estimator
is typically larger than that of the unbiased version
in small samples
(the extent of this increase depends on the choice of the
debiasing parameters $a_r$),
leading to wider confidence intervals and larger estimation error,
despite the theoretical asymptotic improvements.
Nonetheless, the empirical results in Section~\ref{sec:implementation}
demonstrate that the debiasing effect can overcome the increased
variance with moderate sample sizes.
Because we employ symmetric confidence intervals, the coverage error depends
on the squared bias
$1 / \lambda^{2((2J+2)\wedge\beta)}$,
whereas the corresponding Berry--Esseen rate would depend on the
(larger) linear bias $1 / \lambda^{(2J+2)\wedge\beta}$.

\section{Implementation and empirical results}%
\label{sec:implementation}

We discuss procedures for selecting the parameters
involved in fitting a debiased Mondrian random forest;
namely the base lifetime parameter $\lambda$,
the number of trees in each forest $B$,
the order of the bias correction $J$,
and the debiasing scale parameters $a_r$ for $0 \leq r \leq J$.
We then provide empirical results with simulated data
to demonstrate the effectiveness of our methods.

\subsection{Reproducibility}

All code used to generate results in this section
(e.g., model training and parameter tuning) is available at
\url{https://github.com/WGUNDERWOOD/MondrianForests.jl}.

\subsection{Tuning parameter selection}
\label{sec:parameter_selection}

The most important parameter is the base Mondrian lifetime $\lambda$,
which plays the role of a complexity parameter and thus governs the overall
bias--variance trade-off of the estimator.
Correct tuning of $\lambda$ is especially important in two main respects:
firstly, in order to use the central limit theorem established in
Theorem~\ref{thm:clt_debiased}, we must have that the bias converges to zero,
requiring $\lambda \gg n^{\frac{1}{d + 2((2 J + 2) \wedge \beta)}}$.
Secondly, the minimax optimality result of Theorem~\ref{thm:minimax} is valid
only in the regime $\lambda \asymp n^{\frac{1}{d + 2\beta}}$,
and so $\lambda$ requires careful determination in practice.
For clarity, in this section we use the notation
$\hat\mu_\rd(x; \lambda, J)$ for the debiased Mondrian random forest estimator
implemented with lifetime $\lambda$ and debiasing order $J$,
as in \eqref{eq:debiased}.
Similarly, we write $\hat\Sigma_\rd(x; \lambda, J)$ for the
associated variance estimator \eqref{eq:Sigma_hat_d}.

For minimax-optimal point estimation when $\beta$ is known
(for example, when the data come from noisy measurements
of a smooth physical system),
choose any sequence $\lambda \asymp n^{\frac{1}{d + 2\beta}}$
and use $\hat\mu_\rd(x; \lambda, J)$ with
$J = \lfloor \flbeta / 2 \rfloor$,
following the theory given in Theorem~\ref{thm:minimax}.
For an explicit example of how to choose the lifetime,
one can instead use
$\hat\mu_\rd\big(x; \hat\lambda_{J-1}, J-1\big)$
so that the leading bias is explicitly characterized by
Lemma~\ref{lem:bias_debiased},
and with $\hat\lambda_{J-1}$ as defined below.
This estimator is however not minimax-optimal as
the debiasing order of $J-1 < J$
does not satisfy the conditions of Theorem~\ref{thm:minimax}.

For performing inference, a more careful procedure is required;
we suggest the following, when $\beta > 2$ is known.
Set $J = \lfloor \flbeta / 2 \rfloor$ as before,
and use $\hat\mu_\rd\big(x; \hat\lambda_{J-1}, J\big)$
and $\hat\Sigma_\rd\big(x; \hat\lambda_{J-1}, J\big)$
to construct a confidence interval \eqref{eq:CI_d},
so that one selects a lifetime tailored
for a more biased estimator than that which is actually used.
This results in an inflated lifetime estimate, guaranteeing
the resulting bias is negligible when it is
plugged into the fully debiased estimator.
This approach to tuning parameter selection
and debiasing for nonparametric
inference corresponds to an application of robust bias correction
\citep{calonico2018jasa,calonico2022bernoulli},
where the point estimator is bias-corrected
and the robust standard error estimator incorporates the additional
variability introduced by the correction.
This gives a refined distributional approximation
but may not exhaust the underlying
smoothness of the regression function.
An alternative approach based on Lepski's method
\citep{lepski1992asymptotically,birge2001alternative}
could be developed with the latter goal in mind,
when $\beta$ is unknown.

It remains to propose a concrete method for computing $\hat\lambda_{J}$ in
finite samples; we suggest a procedure based on minimizing the
asymptotic mean squared error (AMSE) using plug-in selection with
polynomial estimation, building on classical ideas from the nonparametric
smoothing literature.
Expressions for the AMSE are available as direct consequences of
Lemmas~\ref{lem:bias_debiased} and~\ref{lem:variance_debiased},
provided that $J < \lfloor \flbeta/2 \rfloor$
so the H{\"o}lder smoothness is not fully exhausted.

\subsubsection*{Selecting the lifetime parameter
\texorpdfstring{$\lambda$}{lambda} with polynomial estimation}%

For implementation, we propose a simple rule-of-thumb approach. Suppose that
$X_i \sim \Unif\big([0,1]^d\big)$
and that the leading bias of $\hat\mu_\rd(x)$ is well approximated by an
additively separable function so that,
writing $\partial^{2 J + 2}_j \mu(x)$
for $\partial^{2 J + 2}_j \mu(x) / \partial x_j^{2 J + 2}$,
the asymptotic bias is
\begin{align*}
  \ABias(x; \lambda, J)
  = \frac{\bar \omega B_{J+1}(x)}{\lambda^{2 J + 2}}
  = \frac{1}{\lambda^{2 J + 2}}
  \frac{\bar \omega }{J + 2}
  \sum_{j=1}^d
  \partial^{2 J + 2}_j \mu(x).
\end{align*}
Suppose that the model is homoscedastic so
$\sigma^2(x) = \sigma^2$ and
the asymptotic variance of $\hat\mu_\rd$ is
\begin{align*}
  \AVar(x; \lambda, J)
  = \frac{\lambda^d}{n}
  \Sigma_\rd(x)
  &=
  \frac{\lambda^d \sigma^2}{n}
  \sum_{r=0}^{J}
  \sum_{r'=0}^{J}
  \omega_r
  \omega_{r'}
  \left( \ell_{r r'} + \ell_{r' r} \right)^d.
\end{align*}
The asymptotic mean squared error is therefore
\begin{align*}
  \AMSE(x; \lambda, J)
  &=
  \frac{1}{\lambda^{4 J + 4}}
  \frac{\bar \omega^2}{(J + 2)^2}
  \left(
    \sum_{j=1}^d
    \partial^{2 J + 2}_j \mu(x)
  \right)^2
  \hspace*{-1mm}
  + \frac{\lambda^d \sigma^2}{n}
  \sum_{r=0}^{J}
  \sum_{r'=0}^{J}
  \omega_r
  \omega_{r'}
  \left( \ell_{r r'} + \ell_{r' r} \right)^d.
\end{align*}
Minimizing over $\lambda > 0$ yields the AMSE-optimal lifetime parameter
\begin{align*}
  \lambda_{J}
  &=
  \left(
    \frac{
      \frac{(4 J + 4) \bar \omega^2}{(J + 2)^2}
      n \left(
        \sum_{j=1}^d
        \partial^{2 J + 2}_j \mu(x)
      \right)^2
    }{
      d \sigma^2
      \sum_{r=0}^{J}
      \sum_{r'=0}^{J}
      \omega_r
      \omega_{r'}
      \left( \ell_{r r'} + \ell_{r' r} \right)^d
    }
  \right)^{\frac{1}{4 J + 4 + d}}.
\end{align*}
An estimator of $\lambda_J$ is given by the plug-in procedure
\begin{align*}
  \hat\lambda_J
  &=
  \left(
    \frac{
      \frac{(4 J + 4) \bar \omega^2}{(J + 2)^2}
      n \left(
        \sum_{j=1}^d
        \partial^{2 J + 2}_j \hat\mu(x)
      \right)^2
    }{
      d \hat\sigma^2
      \sum_{r=0}^{J}
      \sum_{r'=0}^{J}
      \omega_r
      \omega_{r'}
      \left( \ell_{r r'} + \ell_{r' r} \right)^d
    }
  \right)^{\frac{1}{4 J + 4 + d}}
\end{align*}
for some preliminary estimators
$\partial^{2 J + 2}_j \hat\mu(x)$ and $\hat\sigma^2$.
These can be obtained by fitting a global polynomial regression
to the data $(\bX, \bY)$ of order $2 J + 4$ without interaction terms.
To do this, define the $n \times ((2 J + 4)d + 1)$ design matrix
$\bP$ with rows given by
\begin{align*}
  P(X_i) = \left(
    1, X_{i1}, X_{i1}^2, \ldots, X_{i1}^{2 J + 4},
    X_{i2}, X_{i2}^2, \ldots, X_{i2}^{2 J + 4},
    \ldots,
    X_{id}, X_{id}^2, \ldots, X_{id}^{2 J + 4}
  \right).
\end{align*}
Then the derivative estimator is
\begin{align*}
  \partial^{2 J + 2}_j \hat\mu(x)
  &=
  \partial^{2 J + 2}_j P(x)
  \big( \bP^\T \bP \big)^{-1} \bP^\T \bY \\
  \nonumber
  &=
  (2J + 2)!
  \left(
    0_{1 + (j-1)(2 J + 4) + (2J + 1)},
    1, x_j, x_j^2 / 2,
    0_{(d-j)(2 J + 4)}
  \right)
  \big( \bP^\T \bP \big)^{-1} \bP^\T \bY,
\end{align*}
and the variance estimator $\hat\sigma^2$ is
the based on the residual sum of squared errors of this model:
\begin{align*}
  \hat\sigma^2
  &=
  \frac{1}{n - (2J + 4)d - 1}
  \big(
    \bY^\T \bY
    - \bY^\T \bP \big( \bP^\T \bP \big)^{-1} \bP^\T \bY
  \big).
\end{align*}

\subsubsection*{Choosing the number \texorpdfstring{$B$}{B} of trees
in each forest}%

The next parameter to choose is the number of trees in each forest.
If no debiasing is applied, we suggest taking
$B \asymp \sqrt{n}$ to ensure the coverage error in
Theorem~\ref{thm:confidence} converges to zero.
If debiasing is used then we recommend setting
$B \asymp n^{\frac{2J-1}{2J}}$,
consistent with Theorem~\ref{thm:minimax} and
Theorem~\ref{thm:confidence_debiased}.

\subsubsection*{Setting the debiasing order \texorpdfstring{$J$}{J}}%

Deciding how many orders of bias to remove
requires knowledge of the H{\"o}lder smoothness of $\mu$ and $f$,
which is in practice very difficult to estimate statistically.
As such we recommend removing only the first one or two
bias terms, taking $J \in \{0,1,2\}$
to avoid inflating the variance of the estimator.

\subsubsection*{Selecting the debiasing scalars \texorpdfstring{$a_r$}{ar}}%

As in Section~\ref{sec:debiased},
take a fixed geometric or arithmetic sequence.
For example, $a_r = (1+\gamma)^r$
or $a_r = 1 + \gamma r$ where $\gamma > 0$;
we suggest $a_r = (3/2)^r$.

\subsection{Empirical results}

To demonstrate the empirical properties of our proposed
estimation and inference methodology, we present results with simulated
data. Throughout this section we use the data generating process given by
uniform covariates $X_i \sim \Unif[0, 1]^d$ for $d \in \{1, 2\}$,
a sinusoidal regression function
$\mu(x) = \sum_{j=1}^{d} \sin(\pi x_j)$, and homoscedastic normal errors
$\varepsilon_i \sim \cN(0, \sigma^2)$ with $\sigma = 3/10$.
We thus ensure that the regression function is smooth
and appropriately bounded but is not a polynomial,
so the bias terms given in Lemma~\ref{lem:bias} do not vanish.
The additive structure is not essential; similar performance is expected
for more general smooth models with dependence
between the covariates, assuming a sufficiently large sample size
and densities bounded away from zero.
We focus on estimation at the design point
$x = \left(1/2, \ldots, 1/2\right) \in \R^d$, and use
$n=1000$ samples and $B=800$ trees in each forest.
We demonstrate our procedures both
with and without debiasing by setting $J = 1$ and $J = 0$ respectively,
and when $J = 1$ we use the debiasing scalars $(a_1, a_2) = (1, 3/2)$
suggested in Section~\ref{sec:debiased}, yielding debiasing coefficients of
$(\omega_0, \omega_1) = (-4/5, 9/5)$.
For lifetime selection (LS),
we first show our estimator $\hat \lambda_J$
based on polynomial regression
(Section~\ref{sec:parameter_selection}), and then present
the infeasible oracle lifetime $\lambda_J$
which exactly minimizes the asymptotic mean squared error.
To illustrate robustness with respect to this tuning parameter, we repeat the
same experiments but rescaling $\lambda_J$ by a lifetime multiplier
$\LM \in \{1 \pm \ell/5 : 0 \leq \ell \leq 2\}$.
We further exhibit the robust bias correction (BC) approach
discussed in Section~\ref{sec:parameter_selection} by using a debiased
estimator ($J=1$) with the AMSE-optimal lifetime parameter $\lambda_0$.

For each such estimator, we present the
empirical root mean squared error (RMSE),
bias, standard deviation (SD),
and absolute bias/SD ratio, based on $3000$ repeats.
We also show the estimated standard deviation
$\widehat\SD = \sqrt{\lambda^d/n}\, \hat\Sigma_\rd(x)$,
as well as the estimated variance of the errors
$\hat\sigma^2(x)$. Since oracle properties available,
we give the asymptotic bias
(ABias) and asymptotic standard deviation (ASD).
Finally, we present the empirical coverage rate (CR) of
nominal $95\%$ confidence intervals
along with their empirical average widths (CIW).

We now derive the asymptotic oracle properties of our estimators.
Firstly, by Lemma~\ref{lem:variance},
the asymptotic variance of the estimator without debiasing is
\begin{align*}
  \AVar
  = \frac{\lambda^d}{n}
  \Sigma(x)
  &=
  \frac{\lambda^d}{n}
  \frac{\sigma^2(x)}{f(x)}
  \left(
    \frac{4 - 4 \log 2}{3}
  \right)^d
  \approx \frac{\lambda^d \sigma^2}{n} 0.4091^d.
\end{align*}
By Lemma~\ref{lem:variance_debiased},
$\ell_{0 0} = \frac{2}{3} \left( 1 - \log 2 \right)$,
$\ell_{0 1} = \frac{2}{3} \left( 1 - \frac{2}{3} \log \frac{5}{2} \right)$,
$\ell_{1 0} = 1 - \frac{3}{2} \log \frac{5}{3}$,
and $\ell_{1 1} = 1 - \log 2$, so
the asymptotic variance of its debiased counterpart is
\begin{align*}
  \AVar_\rd
  &=
  \frac{\lambda^d}{n}
  \Sigma_\rd(x)
  =
  \frac{\lambda^d}{n}
  \frac{\sigma^2(x)}{f(x)}
  \sum_{r=0}^{J}
  \sum_{r'=0}^{J}
  \omega_r
  \omega_{r'}
  \left( \ell_{r r'} + \ell_{r' r} \right)^d \\
  &\approx
  \frac{\lambda^d \sigma^2}{n}
  \left(
    0.64 \cdot 0.4091^d
    - 2.88 \cdot 0.4932^d
    + 3.24 \cdot 0.6137^d
  \right).
\end{align*}
We similarly establish the asymptotic biases.
Without debiasing, by Lemma~\ref{lem:bias},
\begin{align*}
  \ABias
  &=
  \frac{1}{\lambda^{2}}
  \sum_{|\nu|=1}
  \partial^{2 \nu} \mu(x)
  \prod_{j=1}^d
  \frac{1}{\nu_j + 1}
  =
  \frac{1}{2 \lambda^2}
  \sum_{j=1}^d
  \frac{\partial^2 \mu(x)}{\partial x_j^2}
  =
  - \frac{\pi^2}{2 \lambda^2}
  \sum_{j=1}^d
  \sin(\pi x_j)
  =
  - \frac{\pi^2 d}{2 \lambda^2}.
\end{align*}
For the debiased estimator, with
$\bar\omega = \omega_0 + \omega_1 a_1^{-4}
= -4/9$,
we recover
\begin{align*}
  \ABias_\rd
  &=
  \frac{\bar\omega B_{2}(x)}{\lambda^{4}}
  =
  -\frac{4}{9 \lambda^{4}}
  \sum_{|\nu|=2}
  \partial^{2 \nu} \mu(x)
  \prod_{j=1}^d
  \frac{1}{\nu_j + 1}
  =
  -\frac{4}{27 \lambda^{4}}
  \sum_{j=1}^d
  \frac{\partial^{4} \mu(x)}{\partial^4 x_j}
  =
  -\frac{4 \pi^4 d}{27 \lambda^{4}}.
\end{align*}

Table~\ref{tab:d1_n1000_B800} gives results in the one-dimensional
setting ($d=1$). Firstly, observe that the
polynomial lifetime estimator appears to be moderately
accurate, displaying some oversmoothing when fitting a
polynomial of order $4$ (for $\hat\lambda_0$)
and some undersmoothing with a polynomial of order $6$
(with $\hat\lambda_1$).
The effects of debiasing on RMSE are clear, with
the appropriately tuned debiased Mondrian forest
($J=1$, $\lambda_1$) providing the best results
(Theorem~\ref{thm:minimax}).
Likewise, the effect of debiasing is apparent when
using an undersmoothed lifetime
($J=1$, $\lambda_0$), with the bias being significantly
reduced (see Lemma~\ref{lem:bias_debiased})
at the expense of a larger standard deviation.
The variance estimator performs well,
with $\widehat\SD$ a good approximation for the finite-sample
SD, and $\hat\sigma^2$ similarly sits close to $\sigma^2 = 0.09$.
The value of robust bias correction ($J=1$, $\lambda_0$)
for statistical inference is clear, with the coverage rates
clustering around the nominal $95\%$ even with perturbed
lifetime values (see Theorem~\ref{thm:confidence_debiased}).
In contrast, the no-debiasing estimator
($J=0$, $\lambda_0$) fails to attain correct coverage,
while its fully debiased counterpart
($J=1$,~$\lambda_1$) lacks robustness,
reaching the nominal level only with larger lifetime values.
Accurate coverage is at the expense of wider
confidence intervals, but the differences are not large.

\begin{table}[H]
  \begin{center}
    \begin{scriptsize}
\begin{tabular}{|c|cc|cc|cccc|cc|ccc|cc|}
  \hline
  & $J$ & LS & LM & $\lambda$ & RMSE & Bias & SD & Bias/SD &
  $\SDhattab$ & $\hat\sigma^2$ & ARMSE &
  ABias & ASD & CR & CIW \\
  \hline
  \multirow{6}{*}{\rotatebox{90}{No debiasing}}&0& $\hat\lambda_{0}$& 1.0&
  14.73& 0.0351& -0.0250& 0.0247& 1.0123& 0.0236& 0.0931& 0.0369& -0.0270&
  0.0232& 82.5\%& 0.093\\
  && $\lambda_{0}$& 1.2& 23.10& 0.0307& -0.0092& 0.0293& 0.3130& 0.0292&
  0.0894& 0.0306& -0.0092& 0.0292& 93.6\%& 0.114\\
  &&& 1.1& 21.18& 0.0300& -0.0109& 0.0280& 0.3888& 0.0280& 0.0897& 0.0300&
  -0.0110& 0.0279& 93.4\%& 0.110\\
  &&& 1.0& 19.25& 0.0297& -0.0131& 0.0267& 0.4909& 0.0267& 0.0901& 0.0298&
  -0.0133& 0.0266& 92.9\%& 0.105\\
  &&& 0.9& 17.33& 0.0300& -0.0160& 0.0253& 0.6326& 0.0254& 0.0907& 0.0301&
  -0.0164& 0.0253& 90.7\%& 0.100\\
  &&& 0.8& 15.40& 0.0312& -0.0201& 0.0238& 0.8438& 0.0241& 0.0916& 0.0316&
  -0.0208& 0.0238& 87.5\%& 0.095\\
  \hline
  \multirow{6}{*}{\rotatebox{90}{Debiasing}}&1& $\hat\lambda_{1}$& 1.0& 11.14&
  0.0301& -0.0031& 0.0300& 0.1036& 0.0302& 0.1002& 0.0296& -0.0026& 0.0287&
  95.0\%& 0.119\\
  && $\lambda_{1}$& 1.2& 7.86& 0.0255& -0.0070& 0.0246& 0.2835& 0.0269& 0.1103&
  0.0245& -0.0038& 0.0242& 95.9\%& 0.106\\
  &&& 1.1& 7.21& 0.0255& -0.0095& 0.0236& 0.4031& 0.0263& 0.1147& 0.0238&
  -0.0053& 0.0232& 95.2\%& 0.103\\
  &&& 1.0& 6.55& 0.0264& -0.0135& 0.0227& 0.5950& 0.0256& 0.1198& 0.0235&
  -0.0078& 0.0221& 94.3\%& 0.100\\
  &&& 0.9& 5.90& 0.0288& -0.0191& 0.0216& 0.8817& 0.0249& 0.1259& 0.0241&
  -0.0119& 0.0210& 90.6\%& 0.097\\
  &&& 0.8& 5.24& 0.0343& -0.0274& 0.0206& 1.3346& 0.0240& 0.1329& 0.0275&
  -0.0191& 0.0198& 82.0\%& 0.094\\
  \hline
  \multirow{6}{*}{\rotatebox{90}{Robust BC}}&1& $\hat\lambda_{0}$& 1.0& 14.73&
  0.0334& -0.0014& 0.0333& 0.0405& 0.0339& 0.0940& 0.0336& -0.0011& 0.0330&
  95.3\%& 0.133\\
  && $\lambda_{0}$& 1.2& 23.10& 0.0420& -0.0004& 0.0420& 0.0105& 0.0419&
  0.0898& 0.0415& -0.0001& 0.0415& 94.8\%& 0.164\\
  &&& 1.1& 21.18& 0.0401& -0.0003& 0.0401& 0.0078& 0.0402& 0.0901& 0.0398&
  -0.0001& 0.0398& 95.0\%& 0.158\\
  &&& 1.0& 19.25& 0.0381& -0.0004& 0.0381& 0.0115& 0.0383& 0.0905& 0.0379&
  -0.0001& 0.0379& 94.7\%& 0.150\\
  &&& 0.9& 17.33& 0.0362& -0.0003& 0.0362& 0.0084& 0.0365& 0.0912& 0.0360&
  -0.0002& 0.0360& 95.0\%& 0.143\\
  &&& 0.8& 15.40& 0.0341& -0.0005& 0.0341& 0.0139& 0.0346& 0.0922& 0.0339&
  -0.0003& 0.0339& 95.3\%& 0.136\\
  \hline
\end{tabular}
     \end{scriptsize}
  \end{center}
  \vspace*{-3mm}
  \caption{Simulation results with
  $d=1$, $n=1000$, and $B=800$, over $3000$ repeats}
  \label{tab:d1_n1000_B800}
  \vspace*{-2mm}
\end{table}

Table~\ref{tab:d2_n1000_B800} presents analogous results in the
two-dimensional setting ($d=2$).
The debiased estimator ($J=1$, $\lambda_1$) again achieves
the best RMSE, and the undersmoothed estimator
($J=1$, $\lambda_0$)
similarly displays the smallest bias/SD ratio.
Coverage rates are generally worse than in Table~\ref{tab:d1_n1000_B800},
mostly due to the increased difficulty posed by the curse
of dimensionality and a reduced effective sample size.
Nonetheless, inference based on robust bias correction continues to exhibit a
pronounced improvement in coverage when compared to standard non-debiased
methods, and again shows a moderate increase in confidence interval widths.

\vspace*{2mm}
\begin{table}[ht]
  \begin{center}
    \begin{scriptsize}
\begin{tabular}{|c|cc|cc|cccc|cc|ccc|cc|}
  \hline
  & $J$ & LS & LM & $\lambda$ & RMSE & Bias & SD & Bias/SD &
  $\SDhattab$ & $\hat\sigma^2$ & ARMSE &
  ABias & ASD & CR & CIW \\
  \hline
  \multirow{6}{*}{\rotatebox{90}{No debiasing}}&0& $\hat\lambda_{0}$& 1.0&
  12.35& 0.0805& -0.0646& 0.0481& 1.3432& 0.0481& 0.0989& 0.0828& -0.0666&
  0.0479& 71.1\%& 0.189\\
  && $\lambda_{0}$& 1.2& 18.39& 0.0758& -0.0310& 0.0692& 0.4481& 0.0627&
  0.0882& 0.0771& -0.0292& 0.0714& 88.2\%& 0.246\\
  &&& 1.1& 16.85& 0.0735& -0.0361& 0.0640& 0.5650& 0.0593& 0.0898& 0.0741&
  -0.0347& 0.0654& 87.0\%& 0.233\\
  &&& 1.0& 15.32& 0.0726& -0.0427& 0.0587& 0.7280& 0.0558& 0.0919& 0.0728&
  -0.0420& 0.0595& 84.9\%& 0.219\\
  &&& 0.9& 13.79& 0.0743& -0.0518& 0.0532& 0.9740& 0.0520& 0.0947& 0.0746&
  -0.0519& 0.0535& 80.8\%& 0.204\\
  &&& 0.8& 12.26& 0.0796& -0.0637& 0.0477& 1.3346& 0.0478& 0.0985& 0.0811&
  -0.0657& 0.0476& 71.6\%& 0.188\\
  \hline
  \multirow{6}{*}{\rotatebox{90}{Debiasing}}&1& $\hat\lambda_{1}$& 1.0& 9.20&
  0.0726& -0.0144& 0.0712& 0.2020& 0.0746& 0.1277& 0.0723& -0.0086& 0.0691&
  95.1\%& 0.292\\
  && $\lambda_{1}$& 1.2& 7.18& 0.0584& -0.0217& 0.0542& 0.3999& 0.0672& 0.1490&
  0.0550& -0.0108& 0.0540& 96.3\%& 0.263\\
  &&& 1.1& 6.58& 0.0577& -0.0283& 0.0503& 0.5620& 0.0644& 0.1602& 0.0518&
  -0.0154& 0.0495& 95.8\%& 0.252\\
  &&& 1.0& 5.99& 0.0596& -0.0381& 0.0459& 0.8299& 0.0613& 0.1733& 0.0503&
  -0.0225& 0.0450& 94.0\%& 0.240\\
  &&& 0.9& 5.39& 0.0664& -0.0516& 0.0418& 1.2332& 0.0578& 0.1879& 0.0530&
  -0.0343& 0.0405& 90.4\%& 0.227\\
  &&& 0.8& 4.79& 0.0797& -0.0704& 0.0373& 1.8873& 0.0538& 0.2044& 0.0656&
  -0.0549& 0.0360& 79.6\%& 0.211\\
  \hline
  \multirow{6}{*}{\rotatebox{90}{Robust BC}}&1& $\hat\lambda_{0}$& 1.0& 12.35&
  0.0889& -0.0053& 0.0888& 0.0598& 0.0854& 0.1047& 0.0928& -0.0014& 0.0927&
  94.9\%& 0.335\\
  && $\lambda_{0}$& 1.2& 18.39& 0.1208& -0.0032& 0.1208& 0.0265& 0.0971&
  0.0925& 0.1381& -0.0003& 0.1381& 89.4\%& 0.380\\
  &&& 1.1& 16.85& 0.1135& -0.0040& 0.1134& 0.0351& 0.0953& 0.0941& 0.1266&
  -0.0004& 0.1266& 90.8\%& 0.373\\
  &&& 1.0& 15.32& 0.1056& -0.0039& 0.1055& 0.0367& 0.0927& 0.0964& 0.1151&
  -0.0005& 0.1151& 92.6\%& 0.363\\
  &&& 0.9& 13.79& 0.0974& -0.0042& 0.0973& 0.0427& 0.0893& 0.0994& 0.1036&
  -0.0008& 0.1036& 93.8\%& 0.350\\
  &&& 0.8& 12.26& 0.0883& -0.0047& 0.0882& 0.0532& 0.0853& 0.1041& 0.0921&
  -0.0013& 0.0921& 94.9\%& 0.334\\
  \hline
\end{tabular}
     \end{scriptsize}
  \end{center}
  \vspace*{-2mm}
  \caption{Simulation results with
  $d=2$, $n=1000$, and $B=800$, over $3000$ repeats}
  \label{tab:d2_n1000_B800}
\end{table}

\section{Computational complexity and application to online learning}%
\label{sec:compute}

We discuss some computational aspects of
(debiased) Mondrian random forests. We firstly consider the batch setting,
where all of the data is available simultaneously, and secondly investigate
the online regime, where data arrives sequentially and the model must be
incrementally updated \citep{lakshminarayanan2014mondrian}.
Mondrian random forests have several properties that make them well suited
for online learning:
(i) in \citet{mourtada2021amf} it was shown that some
online Mondrian forest variants maintain statistical consistency, achieving
the same asymptotic error rates as their batch counterparts under certain
conditions;
(ii) as we will demonstrate (Lemma~\ref{lem:compute_online}),
online Mondrian forest algorithms exploiting the Markov property
of the Mondrian process are computationally efficient,
therefore scaling to large streaming datasets; and
(iii) the random nature of splits in
Mondrian trees allows the forest to naturally adapt to changes in the
underlying data distribution over time (concept drift), without requiring
explicit drift detection or model reset mechanisms.

Some potential applications of online Mondrian forests with uncertainty
quantification include
real-time prediction and monitoring in industrial processes
\citep{gomes2017adaptive},
adaptive pricing and recommendation systems
\citep{krauss2017deep,li2018multi},
online anomaly detection with confidence levels
\citep{martindale2020ensemble},
and streaming data analysis for the natural sciences
\citep{abdulsalam2010classification}.

The inference procedures developed in this
paper extend to the online setting, allowing for uncertainty quantification in
streaming data applications. However, care must be taken in situations where
the underlying distribution may change over time, or where
validity of the inferential
procedures is required to hold uniformly over the data arrival times.
Developing rigorous statistical inference tools for online
Mondrian forests in those more complicated time-dependent regimes is an
interesting direction for future
work.

The core of our computational approaches for batch and online learning
comprise several main ideas; these enable substantial improvements over
naive algorithms based on the equations presented in previous sections.
The first of these is to keep track of which data points are
``local'' to the evaluation point $x$, according to the forest
$(T_{b r}(x) : 1 \leq b \leq B, 0 \leq r \leq J)$.
Define the \emph{union cell} $U(x) \subseteq [0,1]^d$
and \emph{active indices} $I(x) \subseteq \{1, \ldots, n\}$ by
\begin{align}
  \label{eq:compute_local}
  U(x)
  &= \prod_{j=1}^d \bigcup_{b=1}^B \bigcup_{r=0}^J T_{b r}(x)_j
  &&\text{and}
  &I(x)
  &= \{1 \leq i \leq n : X_i \in U(x)\}
\end{align}
respectively,
noting that any data point contributing to $\hat\mu_\rd(x)$
or $\hat\Sigma_\rd(x)$ satisfies $X_i \in U(x)$ and $i \in I(x)$.
As the lifetime parameter $\lambda$ grows, the volume of $U(x)$
and the proportion of contributing samples
$|I(x)| / n$ both converge to zero in expectation,
lowering the effective sample size
and significantly decreasing the amount of computation necessary.
Further, $U(x)$ can be efficiently computed with a divide-and-conquer
approach whenever multiple parallel processors are available.

The second main idea is to observe that the estimators $\hat\mu_\rd(x)$
and $\hat\sigma^2(x)$ can be expressed as ratios of sums.
More precisely, firstly define
\begin{gather}
  \nonumber
  N_{b r}(x)
  = \sum_{i \in I(x)} \I\{X_i \in T_{b r}(x)\},\qquad   S_{b r}(x)
  = \sum_{i \in I(x)} Y_i \, \I\{X_i \in T_{b r}(x)\}, \\
  \label{eq:compute_sums}
  V_{b r}(x)
  = \sum_{i \in I(x)} Y_i^2 \, \I\{X_i \in T_{b r}(x)\},
\end{gather}
which are efficient to update as new samples arrive;
furthermore, they can be computed separately for each
$b$ and $r$ in parallel. Then,
\begin{align}
  \label{eq:compute_mu_sigma2}
  \hat \mu_\rd(x)
  &= \sum_{r=0}^{J} \omega_r \frac{1}{B} \sum_{b=1}^B
  \frac{S_{b r}(x)} {N_{b r}(x)}
  &&\text{and}
  &\hat\sigma^2(x)
  &= \frac{1}{B} \sum_{b=1}^{B} \frac{V_{b 0}(x)}{N_{b 0}(x)}.
\end{align}

The third observation is that the estimators depend on the trees
only through the cell $T_{b r}(x)$. Since \citet{mourtada2020minimax}
characterize the exact distribution of this quantity, it can be sampled
without needing to grow an entire Mondrian tree. Further, the memoryless
property of the exponential distribution
(and thus also of the Mondrian process) means that in the online setting,
only a small fraction of the cells typically need to be updated.

The fourth and final concept is to avoid fitting the relatively computationally
expensive $\hat\Sigma_\rd(x)$ too often. This estimator does not readily admit
a ``ratio of sums'' formulation, and hence is not efficient to update
incrementally. Our recommendation is to instead only update this term
after $K \geq 1$
new data points have arrived on average. Note however that using the active
indices $I(x)$ still permits an improvement over the naive approach, since
\begin{align}
  \label{eq:compute_Sigma_hat}
  \hat\Sigma_\rd(x)
  &=
  \hat\sigma^2(x)
  \frac{n}{\lambda^d}
  \sum_{i \in I(x)}
  \left(
    \sum_{r=0}^J
    \omega_r
    \frac{1}{B}
    \sum_{b=1}^B
    \frac{\I\{X_i \in T_{b r}(x)\}}
    {N_{b r}(x)}
  \right)^2.
\end{align}

Before discussing the online learning setting in more detail,
we present our efficient procedure for batch estimation and inference
in Algorithm~\ref{alg:batch}.

\begin{algorithm}[ht]
  \small
  \iftoggle{journal}{\setstretch{1.3}}{\setstretch{1.2}}
  \caption{Batch learning with Mondrian random forests}
  \label{alg:batch}

  \KwIn{%
    Data $(X_i, Y_i)$ for $1 \leq i \leq n$,
    forest size $B \geq 1$,
    debiasing order $J \geq 0$.
  }

  Select $\lambda$ using one of the methods from
  Section~\ref{sec:parameter_selection}. \\

  Construct the union cell $U(x)$ and active indices $I(x)$
  as in \eqref{eq:compute_local}. \\

  Calculate $N_{b r}(x)$, $S_{b r}(x)$ and $V_{b r}(x)$
  for each $1 \leq b \leq B$ and $0 \leq r \leq J$
  as in \eqref{eq:compute_sums}. \\

  Compute $\hat\mu_\rd(x)$ and $\hat\sigma^2(x)$
  with \eqref{eq:compute_mu_sigma2}. \\

  Calculate $\hat\Sigma_\rd(x)$ and $\CI_\rd(x)$ using
  \eqref{eq:compute_Sigma_hat} and \eqref{eq:CI_d} respectively.

\end{algorithm}
\iftoggle{journal}{\vspace*{-3mm}}{}

The following lemma bounds the average case
time complexity of our batch learning procedure
(Algorithm~\ref{alg:batch}),
under the same assumptions made
throughout the paper.

\begin{lemma}[Computational complexity of batch learning]%
  \label{lem:compute_batch}
  Suppose Assumptions~\ref{ass:data}
  and~\ref{ass:estimator} hold.
  Then the average case time complexity of Algorithm~\ref{alg:batch} is
  \begin{align*}
    \E \left[ \mathcal{T}_{\mathrm b} \right]
    \lesssim
    d (J+1)
    \big(
      n d (J+1) + B
    \big)
    + \frac{n B d (J+1) \log (2 B (J+1))^d}{\lambda^d}.
  \end{align*}
\end{lemma}

We now turn to the online learning setting, making the following assumptions.
Firstly, suppose that
a (debiased) Mondrian random forest with $B$ trees has already been
fitted to a data set with $n$ samples, using a lifetime of $\lambda$, and that
this original data set is still available. Assume that the union cell $U(x)$,
the index set $I(x)$, and the point estimates
$\hat\mu_\rd(x)$ and $\hat\Sigma_\rd(x)$
have been computed, as well as the trees $T_{b r}(x)$ and the
quantities $S_{b r}(x)$, $N_{b r}(x)$ and $V_{b r}(x)$ for $1 \leq b \leq B$
and $0 \leq r \leq J$. A~new data set with $k$ samples then arrives,
where $1 \leq k \leq n$, and we must
produce updated estimates $\hat\mu^*_\rd(x)$, $\hat\Sigma_\rd^*(x)$
and $\CI_\rd^*(x)$ based on all $n + k$ samples.
Our randomized procedure for doing this is
described below, using a star to indicate
updated quantities, and summarized in Algorithm~\ref{alg:online}.

The new sample size is $n^* = n+k$, so
the first step is to update $B$.
As recommended in Section~\ref{sec:implementation}, we take
$B \asymp n^\xi$ for some $\xi \in (0, 1)$; therefore set
$B^* = \big\lfloor (n+k)^\xi B / n^\xi \big\rfloor$.
Next, we update the lifetime parameter $\lambda$. To avoid excessive
computation, we suggest the following:
with probability $1 \wedge (k/K)$, use the methods from
Section~\ref{sec:parameter_selection} to compute a new lifetime parameter
$\lambda^* \geq \lambda$ using all of the data.
Otherwise, note that $\lambda \asymp n^\zeta$ for some $\zeta \in (0, 1/d)$
(for example $\zeta = 1 / (d + 2\beta)$
under the conditions of Theorem~\ref{thm:minimax})
and set $\lambda^* = (n+k)^\zeta \lambda / n^\zeta$.
Next, to update the trees $T_{b r}(x)$,
sample $E_{b r j 1}$ and $E_{b r j 2}$ i.i.d.\ $\Exp(1)$, and set
\begin{align}
  \label{eq:update_trees}
  T_{b r}^*(x)_j^-
  &= T_{b r}(x)_j^- \vee
  \left(x_j - \frac{E_{b r j 1}}{\lambda^* - \lambda}\right),
  &T_{b r}^*(x)_j^+
  &= T_{b r}(x)_j^+ \wedge
  \left(x_j + \frac{E_{b r j 2}}{\lambda^* - \lambda}\right).
\end{align}
Since $B^* \geq B$, we also generate new trees
$T_{b r}^*(x)$ for $B + 1 \leq b \leq B^*$ and $0 \leq r \leq J$ using
\begin{align}
  \label{eq:compute_new_trees}
  T_{b r}^*(x)_j^-
  &= 0 \lor \left( x_j - \frac{E_{b r j 1}}{a_r \lambda}\right),
  & T_{b r}^*(x)_j^+
  &= 1 \land \left( x_j + \frac{E_{b r j 2}}{a_r \lambda}\right).
\end{align}
Computing $U^*(x)$ is simple, applying
\eqref{eq:compute_local} to $T_{b r}^*(x)$.
To update $I(x)$, set
\begin{align}
  \label{eq:compute_indices}
  I^*(x) =
  \begin{cases}
    \{i \in I(x) \cup \{n+1, \ldots, n+k\} : X_i \in U^*(x)\}
    &\text{if }
    U^*(x) \subseteq U(x), \\
    \{1 \leq i \leq n + k : X_i \in U^*(x)\}
    & \text{otherwise}.
  \end{cases}
\end{align}
For $N_{b r}(x)$, and analogously for $S_{b r}(x)$ and $V_{b r}(x)$,
apply the following method:
\begin{align}
  \label{eq:compute_update_sums}
  N_{b r}^*(x)
  &=
  \begin{cases}
    N_{b r}(x) + \sum_{i \in V^*(x), i > n} \I\{X_i \in T_b(x)\}
    &\text{if }
    b \leq B \text{ and } T_{b r}(x) = T_{b r}^*(x) \\
    \sum_{i \in I^*(x)} \I\{X_i \in T_{b r}^*(x)\}
    & \text{otherwise}.
  \end{cases}
\end{align}

Finally, $\hat\mu_\rd^*(x)$ and $\hat\sigma^{2*}(x)$ are computed using
\eqref{eq:compute_mu_sigma2}. With probability
$1 \wedge (k/K)$, recalculate $\hat\Sigma_\rd^*(x)$
with \eqref{eq:compute_Sigma_hat}; otherwise set
$\hat\Sigma_\rd^*(x) = \hat\Sigma_\rd(x)$.
The confidence interval $\CI_\rd^*(x)$ can then
be constructed with \eqref{eq:CI_d}.
The following algorithm summarizes our online methodology.

\begin{algorithm}[ht]
  \small
  \iftoggle{journal}{\setstretch{1.3}}{\setstretch{1.2}}
  \caption{Online learning with Mondrian random forests}
  \label{alg:online}

  \KwIn{%
    Data $(X_i, Y_i)$ for $1 \leq i \leq n$,
    forest size $B \geq 1$, debiasing order $J \geq 0$,
    lifetime $\lambda$,
    forest exponent $\xi \in (0, 1)$, lifetime exponent $\zeta \in (0, 1/d)$,
    active region $U(x)$, active indices $I(x)$,
    trees $T_{b r}(x)$ and $N_{b r}(x)$, $S_{b r}(x)$, $V_{b r}(x)$
    for $1 \leq b \leq B$ and $0 \leq r \leq J$,
    new data $(X_i, Y_i)$ for $n+1 \leq i \leq n+k$,
    recalculation gap $K \geq 1$.
  }

  Get the updated number of trees
  $B^* = \big\lfloor (n+k)^\xi B / n^\xi \big\rfloor$.

  With probability $1 \wedge (k / K)$, select $\lambda^*$ as in
  Section~\ref{sec:parameter_selection};
  otherwise, set $\lambda^* = (n+k)^\zeta \lambda / n^\zeta$.

  Generate the incrementally updated forest $T_{b r}^*(x)$ as in
  \eqref{eq:update_trees} and \eqref{eq:compute_new_trees}.

  Construct the updated union cell $U^*(x)$ and active indices $I^*(x)$
  as in \eqref{eq:compute_indices}.

  Calculate $N_{b r}^*(x)$, $S_{b r}^*(x)$, and $V_{b r}^*(x)$
  as in \eqref{eq:compute_update_sums}
  and derive $\hat\mu^*_\rd(x)$ and $\sigma^{2*}(x)$
  from \eqref{eq:compute_sums}.

  With probability $1 \wedge (k/K)$,
  recalculate $\hat\Sigma^*_\rd(x)$ using \eqref{eq:compute_Sigma_hat};
  otherwise, set $\hat\Sigma^*_\rd(x) = \hat\Sigma_\rd(x)$.

  Compute $\CI^*_\rd(x)$ using \eqref{eq:CI_d} with
  $\hat\mu^*_\rd(x)$ and $\hat\Sigma^*_\rd(x)$.

\end{algorithm}

Lemma~\ref{lem:compute_online} bounds the average case
time complexity of our online computational procedure
presented in Algorithm~\ref{alg:online}.

\begin{lemma}[Computational complexity of online learning]
  \label{lem:compute_online}
  Suppose Assumptions~\ref{ass:data}
  and~\ref{ass:estimator} hold.
  Then the average case time complexity of Algorithm~\ref{alg:online} is
  \begin{align*}
    \E \left[ \mathcal{T}_{\mathrm o} \right]
    &\lesssim
    d (J + 1) \!
    \left( \frac{k n d (J + 1)}{K} + k d + B \right)
    + \frac{d (J+1) \log (2 B (J+1))^d}{\lambda^d}
    \left( \! n + B k + \frac{n B}{K} \right).
  \end{align*}
\end{lemma}

Lemma~\ref{lem:compute_batch} already demonstrated that
Algorithm~\ref{alg:batch}
is more efficient than the naive approach of computing
$\hat\mu_\rd(x)$ and $\hat\Sigma_\rd(x)$
directly with \eqref{eq:debiased} and \eqref{eq:Sigma_hat_d}, respectively,
which each have a time complexity of $n (J+1) B$.
The reason for this is that by first constructing the active indices $I(x)$,
we avoid iterating over the entire sample for each tree in
Algorithm~\ref{alg:batch}. Lemma~\ref{lem:compute_online} formalizes the
improvement achieved by Algorithm~\ref{alg:online} in online settings,
relative to the batch estimation approach of Algorithm~\ref{alg:batch}. Most
importantly, the terms involving the product $n B$
are reduced to $n + B k + n B / K$, offering a substantial speed-up
in large forests when the new sample size $k$ is much smaller
than that of the existing data $n$, and when $K$ is large to avoid
regularly estimating the lifetime $\lambda$ and variance $\hat\Sigma_\rd(x)$.

\section{Conclusion}%
\label{sec:conclusion}

We presented a Berry--Esseen theorem under mild conditions
for the Mondrian random forest estimator,
and showed how it can be used to perform statistical inference
on an unknown nonparametric regression function.
We introduced debiased versions of Mondrian random forests, exploiting
higher-order smoothness, and
demonstrated their advantages for statistical inference and their
minimax optimality properties.
We discussed tuning parameter selection, enabling fully
feasible and practical estimation and inference procedures,
and demonstrated the empirical performance of our proposed methodology.
Finally, we developed efficient algorithms for
batch and online settings.

There are several potential avenues for future work on inference with
Mondrian random forests. The development of data-adaptive partitioning
schemes is one such important direction, and could be implemented
perhaps by allowing the lifetime
parameter $\lambda$ to vary across different covariates, yielding
the $d$-dimensional parameter $(\lambda_1, \ldots, \lambda_d)$.
One approach to designing such methodology might involve adapting sparse,
greedy algorithms for non-parametric regression, similar to those described by
\citet{lafferty2008rodeo}, to the context of axis-aligned partitioning
estimators. Specifically, by examining how changes in each $\lambda_j$ affect
the Mondrian forest estimator, e.g., via an estimate of $
\frac{\partial}{\partial \lambda_j} \mathbb{E}[\hat\mu(x)] $, these parameters
can be dynamically adjusted to
more effectively learn low-dimensional structure in the regression function.
Alternatively, one might formulate a Goldenshluger--Lepski-type procedure
\citep{goldenshluger2008universal}
for multiple tuning parameter selection.
Another potential line of research would consist of proposing further
strategies for debiasing Mondrian random forests (and related estimators);
an approach based on within-cell local polynomial smoothing, for example,
may serve to eliminate both design bias and boundary bias,
as well as allowing for less restrictive
conditions on the covariate density function and the regression function.

\section{Acknowledgments and funding}

The authors would like to thank the Editor,
two anonymous reviewers,
Rajita Chandak, Jianqing Fan,
Henry Reeve, Richard Samworth, and Sanjeev Kulkarni for
insightful comments and suggestions. Klusowski would like to thank the
participants
at the $11$th Bernoulli-IMS World Congress in Probability and Statistics in
Bochum, Germany for valuable feedback.
Underwood would like to thank the attendees
at the University of Pittsburgh,
University of Illinois Urbana-Champaign, and University of Michigan
Statistics Seminars,
as well as the members of the University of Cambridge
Statistical Laboratory.

Cattaneo was supported in part by the National Science Foundation through
DMS-2210561 and SES-2241575.
Klusowski was supported in part by the National Science Foundation
through CAREER DMS-2239448, DMS-2054808, and HDR TRIPODS CCF-1934924.
 
\appendix
\section{Proofs and technical results}%
\iftoggle{journal}{}{\label{sec:proofs}}

In this section we present the full proofs of all our results, and also state
some useful technical preliminary and intermediate lemmas.
We use the
following simplified notation for convenience, whenever it is appropriate:
write $\I_{i b}(x) = \I \left\{ X_i \in T_b(x) \right\}$ and $N_b(x) =
\sum_{i=1}^{n} \I_{i b}(x)$, as well as $\I_b(x) = \I \left\{ N_b(x) \geq 1
\right\}$.
We use $C$ to denote a positive constant whose value may change
from line to line, and write
$a_n = O(b_n)$ for $a_n \lesssim b_n$.
We begin with an overview of the main proof strategies and a discussion of the
challenges involved in Section~\ref{sec:overview_proofs}.
We then give some preliminary lemmas in Section~\ref{sec:lemmas},
and present the proofs for
Section~\ref{sec:inference}
(including Lemma~\ref{lem:bias}, Lemma~\ref{lem:variance},
  Theorem~\ref{thm:rate},
  Theorem~\ref{thm:clt}, Lemma~\ref{lem:variance_estimation}, and
Theorem~\ref{thm:confidence})
in Section~\ref{sec:proofs_inference};
the proofs for Section~\ref{sec:debiased}
(including Lemma~\ref{lem:bias_debiased}, Lemma~\ref{lem:variance_debiased},
  Theorem~\ref{thm:minimax}, Theorem~\ref{thm:clt_debiased},
  Lemma~\ref{lem:variance_estimation_debiased}, and
Theorem~\ref{thm:confidence_debiased})
in Section~\ref{sec:proofs_debiased};
and the proofs for Section~\ref{sec:compute}
(including Lemma~\ref{lem:compute_batch} and Lemma~\ref{lem:compute_online})
in Section~\ref{sec:proofs_compute}.

\subsection{Overview of proof strategies}%
\label{sec:overview_proofs}

This section provides some insight into
the general approach we use to establish our main results.
We focus on the technical innovations
forming the core of our arguments, and refer the reader to
the upcoming sections for full proofs.

\subsubsection{Preliminary technical results}

The starting point for our proofs is a result characterizing
the distribution of the shape of a Mondrian cell $T(x)$.
This property is a consequence of the fact that the restriction of a
Mondrian process to a subcell remains a Mondrian process
\citep{mourtada2020minimax}. We have
\begin{align*}
  |T(x)_j|
  &= \left( \frac{E_{j1}}{\lambda} \wedge x_j \right)
  + \left( \frac{E_{j2}}{\lambda} \wedge (1-x_j) \right)
\end{align*}
for all $1 \leq j \leq d$,
recalling that $T(x)_j$ is the side of the cell $T(x)$
aligned with axis $j$,
and where $E_{j1}$ and $E_{j2}$
are mutually independent $\Exp(1)$ random variables.
Our assumptions that $x \in (0,1)$ and $\lambda \to \infty$
mean that the ``boundary terms'' $x_j$ and $1-x_j$ are
eventually ignorable and so
$|T(x)_j| = (E_{j1} + E_{j2}) / \lambda$
with high probability.
Controlling the size of the largest cell in the forest
containing $x$ is now straightforward with a union
bound, giving
\begin{align*}
  \max_{1 \leq b \leq B}
  \max_{1 \leq j \leq d}
  |T_b(x)_j|
  \lesssim_\P \frac{\log B}{\lambda}.
\end{align*}
This shows that, up to logarithmic terms, none of the cells
in the forest at $x$ are significantly larger than average,
ensuring that the Mondrian random forest estimator is
``localized'' around $x$ on the scale of $1/\lambda$,
an important property for our bias characterization.

Having provided upper bounds for the sizes of Mondrian cells,
we also must establish some lower bounds in order to
ensure a sufficient effective sample size and to
quantify the ``small cells'' phenomenon mentioned previously.
The first step towards this is to bound the first two moments
of the truncated inverse Mondrian cell volume; we show that
\begin{align*}
  \E\left[
    1 \wedge
    \frac{1}{n |T(x)|}
  \right]
  &\asymp
  \frac{\lambda^d}{n}
  &&\text{and}
  &\E\left[
    1 \wedge
    \frac{1}{n^2 |T(x)|^2}
  \right]
  &\asymp
  \frac{\lambda^{2d} (\log n)^d}{n^2}.
\end{align*}
These bounds are computed using the exact distribution of $|T(x)|$.
Note that $\E\left[ 1 / |T(x)|^2 \right] = \infty$ because
$1 / (E_{j1} + E_{j2})$ has only $2 - \delta$ finite moments,
so the truncation is crucial here.
Since we have ``almost two'' moments, this truncation is
at the expense of only a logarithmic term.
Nonetheless, third and higher truncated moments will not enjoy such tight
bounds, demonstrating both the fragility of this result and
the inadequacy of
tools such as the Lyapunov central limit theorem which require
$2 + \delta$ marginal moments.

To conclude this investigation into the ``small cell'' phenomenon,
we apply the previous bounds to
ensure that the empirical effective sample sizes
$N_b(x) = \sum_{i=1}^{n} \I \left\{ X_i \in T_b(x) \right\}$
are approximately of the order $n / \lambda^d$ in an appropriate sense;
we demonstrate that
\begin{align*}
  \E\left[
    \frac{\I\{N_b(x) \geq 1\}}{N_b(x)}
  \right]
  &\lesssim
  \frac{\lambda^d}{n}
  &&\text{and}
  &\E\left[
    \frac{\I\{N_b(x) \geq 1\}}{N_b(x)^2}
  \right]
  &\lesssim
  \frac{\lambda^{2d} (\log n)^d}{n^2},
\end{align*}
as well as ``mixed'' bounds
$\E \left[
  \I\{N_b(x) \geq 1\} \I\{N_{b'}(x) \geq 1\}
  / (N_b(x) N_{b'}(x))
\right]
\lesssim \lambda^{2d} / n^2$
when $b \neq b'$, which arise from covariance terms across
multiple trees.
The proof of this result is involved and technical, and proceeds by
induction. The idea is to construct a class of subcells by taking
all possible intersections of the cells in $T_b$ and $T_{b'}$
(we show two trees here for clarity; there may be more)
and noting that each $N_b(x)$ is the sum of the number of points
in each such ``refined cell'' intersected with $T_b(x)$.
We then swap out each refined cell one at a time and replace
the number of data points it contains with its volume multiplied by $n f(x)$,
showing that the expectation on the left hand side does not increase too much
using a moment bound for inverse binomial random variables
based on Bernstein's inequality.
By induction and independence of the trees,
eventually the problem is reduced to computing moments
of truncated inverse Mondrian cell volumes, as above.

\subsubsection{Bias characterization}

Our first substantial result is the bias characterization
given as Lemma~\ref{lem:bias}, in which we precisely
characterize the probability limit of the conditional bias
\begin{align*}
  \E \left[ \hat \mu(x) \mid \bX, \bT \right]
  - \mu(x)
  &=
  \frac{1}{B}
  \sum_{b=1}^B
  \sum_{i=1}^n \big( \mu(X_i) - \mu(x) \big)
  \frac{\I\{X_i \in T_b(x)\}}{N_b(x)}.
\end{align*}
The first step in this proof is to pass to the ``infinite forest''
limit by taking an expectation conditional on $\bX$, or equivalently
marginalizing over $\bT$, applying the conditional Markov inequality
to see
\begin{align*}
  \big|
  \E \left[ \hat \mu(x) \mid \bX, \bT \right]
  - \E \left[ \hat \mu(x) \mid \bX \right]
  \big|
  &\lesssim_\P
  \frac{1}{\lambda^{1 \wedge \betamu} \sqrt B}.
\end{align*}
While this may seem a crude approximation, it is already known that
fixed-size Mondrian forests have suboptimal bias properties
when compared to forests with a diverging number of trees.
In fact, when $\beta \geq 1$,
the error $1 / \big(\lambda^{1 \wedge \beta} \sqrt B\big)$
exactly accounts for the first-order bias of individual Mondrian trees
\citep{mourtada2020minimax}.

Next we show that $\E \left[ \hat \mu(x) \mid \bX \right]$
converges in probability to its expectation, using
the Efron--Stein theorem to handle this non-linear function of
the i.i.d.\ variables $X_i$.
The important insight here is that replacing a sample
$X_i$ with an independent copy
$\tilde X_i$ can change the value of
$N_b(x)$ by at most one.
Further, this can happen only on the event
$\{ X_i \in T_{b}(x) \} \cup \{ \tilde X_i \in T_{b}(x) \}$,
which occurs with probability on the order $1/\lambda^d$
(the expected cell volume) for each tree $1 \leq b \leq B$.
The H\"older property of $\mu$
and the upper bound on the maximum cell size then give
$|\mu(X_i) - \mu(x)|
\lesssim \max_{1 \leq j \leq d} |T_b(x)_j|^{1 \wedge \betamu}
\lesssim_\P 1 / \lambda^{1 \wedge \betamu}$
whenever $X_i \in T_b(x)$,
so we combine this with moment bounds for the denominator $N_b(x)$ to see
\begin{align*}
  \left|
  \E \left[ \hat \mu(x) \mid \bX \right]
  - \E \left[ \hat \mu(x) \right]
  \right|
  \lesssim_\P
  \frac{1}{\lambda^{1 \wedge \betamu}}
  \sqrt{\frac{\lambda^d}{n}}.
\end{align*}

The next step is to approximate the resulting non-random bias
$\E \left[ \hat \mu(x) \right] - \mu(x)$ as a polynomial in $1/\lambda$.
To this end, we firstly apply a concentration-type result for the binomial
distribution to deduce that
\begin{align*}
  \E \left[ \frac{\I\{N_b(x) \geq 1\}}{N_b(x)} \Bigm| \bT \right]
  \approx
  \frac{1}{n \int_{T_b(x)} f(s) \diffi s}
\end{align*}
in an appropriate sense, and hence,
by conditioning on $\bT$ and $\bX$ without $X_i$,
\begin{align}
  \label{eq:bias_ratio}
  \E \left[ \hat \mu(x) \right]
  - \mu(x)
  &\approx
  \E \left[
    \frac{\int_{T_b(x)} (\mu(s) - \mu(x)) f(s) \diffi s}
    {\int_{T_b(x)} f(s) \diffi s}
  \right].
\end{align}
Next we apply the multivariate version of Taylor's theorem to the integrands
in both the numerator and the denominator in \eqref{eq:bias_ratio},
and then apply the Maclaurin series of $1 / (1+x)$
and the multinomial theorem to recover a single polynomial in $1/\lambda$.
The error term is on the order of $1/\lambda^\beta$ and depends on
the smoothness of $\mu$ and $f$,
and the polynomial coefficients are given by
various expectations involving exponential random variables.
The final step is to verify using symmetry of Mondrian cells that all
the odd monomial coefficients are zero,
and to calculate some explicit examples of the
form of the limiting bias.

\subsubsection{Central limit theorem}

To prove our second main result (Theorem~\ref{thm:clt}),
we apply a version of the Berry--Esseen theorem for i.n.i.d. random variables,
conditional on $(\bX, \bT)$,
which only requires $2 + \delta$ moments.
Define the variables
\begin{align*}
  S_i(x)
  &=
  \sqrt{\frac{n}{\lambda^d}}
  \frac{1}{B} \sum_{b=1}^B
  \frac{\I \{X_i \in T_b(x)\} \varepsilon_i}
  {N_{b}(x)},
\end{align*}
which are independent and zero-mean given $(\bX, \bT)$,
and further satisfy
\begin{align*}
  \sqrt{\frac{n}{\lambda^d}}
  \big(
    \hat\mu(x)
    - \E\left[
      \hat\mu(x) \mid \bX, \bT
    \right]
  \big)
  = \sum_{i=1}^n S_i(x).
\end{align*}
Thus by \citet[Theorem~5.7]{Petrov_1995_Book}, conditional on $(\bX, \bT)$,
taking a marginal expectation,
\begin{align*}
  \sup_{t \in \R}
  \bigg|
  \P \bigg(
    \tilde \Sigma(x)^{-\frac{1}{2}}
    \sum_{i=1}^{n} S_i
    \leq t
  \bigg)
  - \Phi(t)
  \bigg|
  &\lesssim
  \E \bigg[
    1 \wedge
    \bigg(
      \tilde \Sigma(x)^{- \frac{2 + (\delta \wedge 1)}{2}}
      \sum_{i=1}^{n}
      \E \Big[ |S_i|^{2 + (\delta \wedge 1)} \mid \bX, \bT \Big]
    \bigg)
  \bigg].
\end{align*}
Bounding the right-hand side
now reduces to establishing properties of $\tilde\Sigma(x)$
and its large-sample limit $\Sigma(x)$.
To this end, we again use the Efron--Stein theorem
to bound $\Var \big[ \tilde\Sigma(x) \big]$ and then
apply a careful sequence of approximations to control
$\E \big[ \tilde\Sigma(x) \big] - \Sigma(x)$.
The final task is to calculate the limiting variance $\Sigma(x)$.
It is a straightforward but tedious exercise to verify
that each denominator $N_b(x)$ can be replaced by $n f(x) |T_b(x)|$,
yielding
\begin{align*}
  \Sigma(x)
  &=
  \frac{\sigma^2(x)}{f(x)}
  \lim_{\lambda \to \infty}
  \frac{1}{\lambda^d}
  \E \left[
    \frac{|T_{b}(x) \cap T_{b'}(x)|}
    {|T_{b}(x)| \, |T_{b'}(x)|}
  \right]
  = \frac{\sigma^2(x)}{f(x)}
  \left(
    \E \left[
      \frac{(E_{1} \wedge E'_{1}) + (E_{2} \wedge E'_{2})}
      {(E_{1} + E_{2}) (E'_{1} + E'_{2})}
    \right]
  \right)^d,
\end{align*}
where $E_1$, $E_2$, $E'_1$, and $E'_2$ are independent $\Exp(1)$,
by the cell shape distribution and independence of the trees.
This final expectation is calculated by integration, using
various incomplete gamma function identities.

\subsubsection{Confidence intervals}

While Theorem~\ref{thm:clt} gives a distributional approximation for the
infeasible $t$-statistic, in order to construct confidence intervals we must
instead approximate the corresponding feasible $t$-statistic.
To do this, first observe that
if $\tau$ and $\hat \tau$ are real-valued random variables
and $\varepsilon > 0$, then the following anti-concentration inequality holds:
\begin{align*}
  \sup_{t \in \R}
  \left| \P \big( \hat \tau \leq t \big) - \Phi(t) \right|
  &\leq
  \sup_{t \in \R}
  \left| \P \big( \tau \leq t \big) - \Phi(t) \right|
  + \varepsilon \sqrt{2 / \pi}
  + \P \big( |\hat \tau - \tau| > \varepsilon \big).
\end{align*}
We apply this result to
\begin{align*}
  \hat\tau =
  \sqrt{\frac{n}{\lambda^d}}
  \left(
    \frac{\hat \mu(x) - \mu(x)}
    {\sqrt{\smash[b]{\hat \Sigma(x)}}}
    - \frac{\E\left[ \hat \mu(x) \right] - \mu(x)}
    {\sqrt{\Sigma(x)}}
  \right)
  &&\text{and}
  && \tau = \sqrt{\frac{n}{\lambda^d}}
  \frac{\hat \mu- \E\left[ \hat \mu(x) \mid \bX, \bT \right]}
  {\sqrt{\smash[b]{\tilde \Sigma(x)}}},
\end{align*}
bounding $\P \big( |\hat \tau - \tau| > \varepsilon \big)$ using
our established results on $\hat \mu(x)$, $\tilde \Sigma(x)$
and $\hat \Sigma(x)$.
Exploiting symmetry of the resulting confidence interval
permits a quadratic dependence of the coverage error on the bias.

\subsection{Preliminary lemmas}
\label{sec:lemmas}

We begin by bounding the maximum size of any cell
in a Mondrian forest containing $x$.
This result is used regularly throughout many of our other proofs,
and captures the ``localizing'' behavior of the Mondrian random
forest estimator, showing that Mondrian cells have side lengths
at most on the order of $1/\lambda$.
For distributions $P_1$ and $P_2$ on $\R$, we write
$P_1 \leq P_2$ if $P_2$ stochastically dominates $P_1$; that is,
if there exist random variables $X_1 \sim P_1$ and $X_2 \sim P_2$
such that $X_1 \leq X_2$ almost surely.
Likewise, if $X_1$ is a real-valued random variable
and $P_2$ is a law on $\R$,
we write $X_1 \leq P_2$ if there exists $X_2 \sim P_2$
such that $X_1 \leq X_2$ almost surely,
on a possibly enlarged probability space.
Observe that if $n_1, n_2 \in \N$ with $n_1 \leq n_2$
and $p_1, p_2 \in [0, 1]$ with $p_1 \leq p_2$, then
$\Bin(n_1, p_1) \leq \Bin(n_2, p_2)$.

\begin{lemma}[Upper bound on the largest cell in a Mondrian forest]%
  \label{lem:largest_cell}
  Let $T_1, \ldots, T_B \sim \cM\big([0,1]^d\!, \lambda\big)$
  and take $x \in (0,1)^d$. Then for all $t > 0$
  \begin{align*}
    \P \left(
      \max_{1 \leq b \leq B}
      \max_{1 \leq j \leq d}
      |T_b(x)_j|
      \geq \frac{t}{\lambda}
    \right)
    &\leq
    2dB e^{-t/2}.
  \end{align*}

\end{lemma}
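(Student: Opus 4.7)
The plan is to combine the exact distributional description of the Mondrian cell shape already recalled in Section~\ref{sec:overview_proofs} with a simple union bound over trees and coordinate directions. Recall that for a single Mondrian process the side of the cell $T(x)$ in direction $j$ can be written as
\begin{align*}
  |T(x)_j|
  &= \left( \frac{E_{j1}}{\lambda} \wedge x_j \right)
  + \left( \frac{E_{j2}}{\lambda} \wedge (1-x_j) \right),
\end{align*}
where $E_{j1}, E_{j2}$ are i.i.d.\ $\Exp(1)$ variables, mutually independent across $j$. First, I would drop the truncations at $x_j$ and $1-x_j$ to obtain the pointwise bound $|T(x)_j| \leq (E_{j1}+E_{j2})/\lambda$. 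This reduces controlling the side length to controlling the tail of a sum of two independent exponentials.

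Next, I would bound $\P(E_{j1}+E_{j2} \geq t)$ without bothering with the exact $\mathrm{Gamma}(2,1)$ tail: by a trivial union bound,
\begin{align*}
  \P(E_{j1}+E_{j2} \geq t)
  &\leq \P(E_{j1} \geq t/2) + \P(E_{j2} \geq t/2)
  = 2 e^{-t/2},
\end{align*}
so $\P(|T(x)_j| \geq t/\lambda) \leq 2 e^{-t/2}$ for each fixed $j$ and each single tree.

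Finally, I would apply a union bound over $1 \leq b \leq B$ and $1 \leq j \leq d$ to obtain
\begin{align*}
  \P \left(
    \max_{1 \leq b \leq B}
    \max_{1 \leq j \leq d}
    |T_b(x)_j|
    \geq \frac{t}{\lambda}
  \right)
  &\leq \sum_{b=1}^{B} \sum_{j=1}^{d} 2 e^{-t/2}
  = 2 d B \, e^{-t/2},
\end{align*}
which is exactly the claimed inequality. There is no real obstacle here: the two places where care is needed are (i) remembering that the truncations at $x_j$ and $1-x_j$ only make $|T(x)_j|$ smaller, so discarding them is valid for an upper tail bound; and (ii) using the crude $t/2$ split of the exponential sum, rather than the exact gamma tail $(1+t)e^{-t}$, so that the resulting decay rate $e^{-t/2}$ is clean and sufficient for the logarithmic-factor bounds on $\max_{b,j} |T_b(x)_j|$ used later in the paper.
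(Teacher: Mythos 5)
Your proof is correct and follows essentially the same route as the paper: both use the exact Mondrian cell-shape distribution from \citet[Proposition~1]{mourtada2020minimax}, drop the truncations to get $|T_b(x)_j| \leq (E_{bj1}+E_{bj2})/\lambda$, split the sum of exponentials at $t/2$, and conclude by a union bound, yielding the same $2dB\,e^{-t/2}$ factor.
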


\begin{proof}[Lemma~\ref{lem:largest_cell}]
  We use the explicit distribution of the shape of Mondrian cells
  given by \citet[Proposition~1]{mourtada2020minimax}.
  In particular, we have
  $|T_b(x)_j| = \left( \frac{E_{bj1}}{\lambda} \wedge x_j \right)
  + \left( \frac{E_{bj2}}{\lambda} \wedge (1-x_j) \right)$
  where $E_{bj1}$ and $E_{bj2}$
  are independent $\Exp(1)$ random variables for
  $1 \leq b \leq B$ and $1 \leq j \leq d$.
  Thus $|T_b(x)_j| \leq \frac{E_{bj1} + E_{bj2}}{\lambda}$
  and so by a union bound
  \begin{align*}
    \P \left(
      \max_{1 \leq b \leq B}
      \max_{1 \leq j \leq d}
      |T_b(x)_j|
      \geq \frac{t}{\lambda}
    \right)
    &\leq
    \P \left(
      \max_{1 \leq b \leq B}
      \max_{1 \leq j \leq d}
      (E_{bj1} \vee E_{bj2})
      \geq \frac{t}{2}
    \right) \\
    &\leq
    2dB\,
    \P \left(
      E_{bj1}
      \geq \frac{t}{2}
    \right)
    \leq
    2dB e^{-t/2}.
  \end{align*}
\end{proof}

The next result is another ``localization'' result, this time
showing that the union over the forest
of the cells $T_b(x)$ containing $x$ do not contain ``too many''
samples $X_i$.
In other words, the Mondrian random forest estimator fitted at $x$
should only depend on $n/\lambda^d$ (the effective sample size)
data points up to logarithmic terms.

\begin{lemma}[Upper bound on the number of active data points]%
  \label{lem:active_data}
  Suppose that Assumptions~\ref{ass:data} and~\ref{ass:estimator} hold
  and define
  $N_{\cup}(x) =
  \sum_{i=1}^{n} \I \left\{ X_i \in \bigcup_{b=1}^{B} T_b(x) \right\}$.
  Then for $t > 0$ and $n \geq \lambda^d$,
  with $\|f\|_\infty = \sup_{x \in [0,1]^d} f(x)$,
  \begin{align*}
    \P \left( N_{\cup}(x) > t^{d+1}
      \frac{n}{\lambda^d}
      \|f\|_\infty
    \right)
    &\leq
    4 d B e^{-t/4}.
  \end{align*}
\end{lemma}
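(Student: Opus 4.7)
The plan is to prove Lemma~\ref{lem:active_data} by combining the maximum cell size bound from Lemma~\ref{lem:largest_cell} with a standard concentration argument for a binomial random variable. Roughly, when none of the cells $T_b(x)$ has any side longer than $t/\lambda$, the union $\bigcup_b T_b(x)$ is contained in a hypercube of volume at most $(2t/\lambda)^d$, and the number of samples $X_i$ falling in such a hypercube is controlled by a binomial concentration inequality.

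First I would introduce the ``good event''
\[
  G = \left\{
    \max_{1 \leq b \leq B}
    \max_{1 \leq j \leq d}
    |T_b(x)_j| \leq \frac{t}{\lambda}
  \right\},
\]
and apply Lemma~\ref{lem:largest_cell} to obtain $\P(G^c) \leq 2dB e^{-t/2}$. Next, I would observe that since each $T_b(x)$ contains $x$, on the event $G$ we have
$\bigcup_{b=1}^{B} T_b(x) \subseteq \prod_{j=1}^{d} [x_j - t/\lambda,\, x_j + t/\lambda] \cap [0,1]^d$,
a region of Lebesgue measure at most $(2t/\lambda)^d$. Consequently, conditional on $\bT$ and on $G$, the probability that a single sample $X_i$ lies in this region is at most $p := (2t/\lambda)^d \|f\|_\infty$, so $N_\cup(x)$ is stochastically dominated by a $\Bin(n, p)$ random variable with mean $\mu := np = 2^d t^d (n/\lambda^d) \|f\|_\infty$.

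Then I would apply a multiplicative Chernoff bound to show that the event $\{N_\cup(x) > M\}$ with $M := t^{d+1} (n/\lambda^d) \|f\|_\infty = (t/2^d)\mu$ has probability at most $2dB e^{-t/4}$. Specifically, writing $k = M$, the standard Chernoff bound gives
\[
  \P\big(\Bin(n, p) \geq k\big)
  \leq \exp\!\Big(-k\log(k/\mu) + k - \mu\Big)
  = \exp\!\Big(-M\log(t/2^d) + M - \mu\Big).
\]
For $t$ sufficiently large that $\log(t/2^d) \geq 2$, the right-hand side is at most $\exp(-M + \mu) \leq \exp(-M/2)$, and since $M/2 = t \cdot 2^{-d-1}(n/\lambda^d)\|f\|_\infty \cdot t^d$ dominates $t/4$ for large $n$ (using $n/\lambda^d \to \infty$ from Assumption~\ref{ass:estimator}), we obtain the desired tail. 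For small values of $t$ the bound $4dBe^{-t/4} \geq 1$ holds trivially, so combining the two regimes and the union bound $\P(N_\cup(x) > M) \leq \P(G^c) + \P(N_\cup(x) > M, G) \leq 2dBe^{-t/2} + 2dBe^{-t/4} \leq 4dBe^{-t/4}$ completes the argument.

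The main technical obstacle is simply keeping the two regimes of $t$ cleanly aligned with the constants in the exponent so that the final bound is exactly of the form $4dBe^{-t/4}$; this is routine but requires some care in tracking the multiplicative factors of $2^d$ and using the fact that $n/\lambda^d \to \infty$ from Assumption~\ref{ass:estimator} to absorb them into the exponent for ``sufficiently large $n$''.
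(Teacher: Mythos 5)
Your proposal is correct and follows essentially the same structure as the paper's proof: condition on the good event from Lemma~\ref{lem:largest_cell}, stochastically dominate $N_\cup(x)$ by a binomial with mean of order $t^d n/\lambda^d$ on that event, apply a tail bound for the binomial, and take a union bound. The only substantive difference is that you use a multiplicative Chernoff bound where the paper uses Bernstein's inequality; both are standard tools for the same purpose and lead to the same conclusion with essentially identical bookkeeping, including the same implicit appeal to $n/\lambda^d \to \infty$ and the trivial regime of small $t$.
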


\begin{proof}[Lemma~\ref{lem:active_data}]

  Note that
  \begin{align*}
    N_\cup(x) \sim
    \Bin\biggl(n, \int_{\bigcup_{b=1}^{B} T_b(x)} f(s) \diffi s \biggr)
    \leq \Bin\biggl(n, 2^d \max_{1 \leq b \leq B} \max_{1 \leq j \leq d}
    |T_b(x)_j|^d \|f\|_\infty \biggr)
  \end{align*}
  conditionally on $\bT$.
  If $N \sim \Bin(n,p)$ then, by Bernstein's inequality,
  $\P\left( N \geq (1 + t) n p\right)
  \leq \exp\left(-\frac{t^2 n^2 p^2 / 2}{n p(1-p) + t n p / 3}\right)
  \leq \exp\left(-\frac{3t^2 n p}{6 + 2t}\right)$.
  Thus for $t \geq 2$,
  \begin{align*}
    \P \left( N_{\cup}(x) > (1+t) n \frac{2^d t^d}{\lambda^d}
      \|f\|_\infty
      \Bigm| \max_{1 \leq b \leq B} \max_{1 \leq j \leq d}
      |T_j(x)| \leq \frac{t}{\lambda}
    \right)
    &\leq
    \exp\left(- \frac{2^d t^{d} n}{\lambda^d}\right).
  \end{align*}
  By Lemma~\ref{lem:largest_cell},
  $\P \left( \max_{1 \leq b \leq B} \max_{1 \leq j \leq d}
  |T_j(x)| > \frac{t}{\lambda} \right)
  \leq 2 d B e^{-t/2}$.
  Hence
  \begin{align*}
    &\P \left( N_{\cup}(x) > 2^{d+1} t^{d+1} \frac{n}{\lambda^d}
      \|f\|_\infty
    \right) \\
    &\quad\leq
    \P \left( N_{\cup}(x) > 2 t n \frac{2^d t^d}{\lambda^d}
      \|f\|_\infty
      \Bigm| \max_{1 \leq b \leq B} \max_{1 \leq j \leq d}
      |T_j(x)| \leq \frac{t}{\lambda}
    \right)
    + \P \left( \max_{1 \leq b \leq B} \max_{1 \leq j \leq d}
      |T_j(x)| > \frac{t}{\lambda}
    \right) \\
    &\quad\leq
    \exp\left(- \frac{2^d t^{d} n}{\lambda^d}\right)
    + 2 d B e^{-t/2}.
  \end{align*}
  Noting the result is trivial for $t < 4$ and replacing $t$ by $t/2$
  gives that for $n \geq \lambda^d$,
  \begin{align*}
    \P \left( N_{\cup}(x) > t^{d+1}
      \frac{n}{\lambda^d}
      \|f\|_\infty
    \right)
    &\leq
    4 d B e^{-t/4}.
  \end{align*}
\end{proof}

Next we give a series of results culminating in a
generalized moment bound for the denominator appearing
in the Mondrian random forest estimator.
We begin by providing a moment bound for the truncated inverse binomial
distribution, which will be useful for controlling
$\frac{\I_b(x)}{N_b(x)} \leq 1 \wedge \frac{1}{N_b(x)}$
because conditional on $T_b$ we have
$N_b(x) \sim \Bin \left( n, \int_{T_b(x)} f(s) \diffi s \right)$.
Our constants could be suboptimal but they are sufficient
for our applications.

\begin{lemma}[An inverse moment bound for the binomial distribution]%
  \label{lem:binomial_bound}
  For $n \geq 1$ and $p \in [0,1]$, let $N \sim \Bin(n, p)$.
  Take $a_1, \ldots, a_k \geq 0$ and $l_1, \ldots, l_k \geq 1$.
  Then with $L = \sum_{j=1}^{k} l_j$,
  \begin{align*}
    \E\left[
      \prod_{j=1}^k
      \left(
        1 \wedge
        \frac{1}{N + a_j}
      \right)^{l_j}
    \right]
    &\leq
    (9 L)^{2 L}
    \prod_{j=1}^k
    \left(
      1 \wedge
      \frac{1}{n p + a_j}
    \right)^{l_j}.
  \end{align*}
\end{lemma}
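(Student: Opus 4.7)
The plan is to split the argument into two conceptually separate steps: first reduce the multi-index claim to a single-index bound via Hölder's inequality, then establish the single-index bound through a Chernoff-type concentration argument combined with a careful case analysis.

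First, I would apply the generalized Hölder inequality with exponents all equal to $k$ to the product of $k$ factors, yielding
\begin{align*}
    \E\left[\prod_{j=1}^k \left(1 \wedge \frac{1}{N+a_j}\right)\right]
    \leq \prod_{j=1}^k \E\left[\left(1 \wedge \frac{1}{N+a_j}\right)^k\right]^{1/k}.
\end{align*}
This reduces the claim to showing, for each fixed $a \geq 0$, the single-index bound
\begin{align*}
    \E\left[\left(1 \wedge \frac{1}{N+a}\right)^k\right]
    \leq (9k)^k \left(1 \wedge \frac{1}{np+a}\right)^k,
\end{align*}
since taking the $1/k$ root and multiplying over $j = 1, \dots, k$ produces $k$ factors of $(9k)$ that combine to $(9k)^k$, matching the claimed prefactor.

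Next, to establish this single-index bound, write $m = np + a$. If $m \leq 9k$, the right-hand side is at least $1$, and the left-hand side is trivially at most $1$, so the bound holds. Otherwise $m > 9k$, and I would split into two subcases according to whether $a$ is comparable to $m$. If $a \geq m/2$, then $N + a \geq a \geq m/2$ deterministically, giving $1 \wedge 1/(N+a) \leq 2/m$ pointwise, so the $k$-th moment is at most $(2/m)^k \leq (9k)^k/m^k$. If instead $a < m/2$, so $np > m/2$, apply the multiplicative Chernoff bound $\P(N \leq np/2) \leq \exp(-np/8)$; on the event $\{N \geq np/2\}$ bound $1/(N+a) \leq 2/m$, and on the complement bound $1 \wedge 1/(N+a) \leq 1$, yielding
\begin{align*}
    \E\left[\left(1 \wedge \frac{1}{N+a}\right)^k\right]
    \leq \left(\frac{2}{m}\right)^k + e^{-np/8}.
\end{align*}
The tail term is controlled by maximizing $x^k e^{-x/16}$ at $x = 16k$ with value $(16k/e)^k$, and using $np \geq m/2$ to convert this into a bound of the form $(16k/e)^k / m^k$. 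Verifying $2^k + (16k/e)^k \leq (9k)^k$ for all $k \geq 1$ (easy since $16/e < 9$) closes the argument.

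The main obstacle is the coupling between the concentration scale $np$ and the shift $a$ in the denominator: when $a$ is large relative to $np$, concentration of $N$ around $np$ yields no useful information because the denominator $N + a$ is already essentially $a$, not $N$. The resolution is precisely the case split based on whether $a \geq m/2$, which separates the two mechanisms (deterministic dominance by $a$ versus concentration of $N$) and prevents the Chernoff tail from being compared against $1/(np+a)^k$ in regimes where it would blow up. Once this split is in place, the remaining arithmetic to achieve the constant $(9k)^k$ is a routine optimization.
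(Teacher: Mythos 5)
Your proposal is correct and takes a genuinely different route from the paper. The paper's proof applies Bernstein's inequality once to get $\P(N \leq np/4) \leq e^{-9np/40}$, then partitions directly on this event and bounds the full $k$-fold product simultaneously, using the elementary inequality $e^{-x} (1+x/k)^k \leq 1$ to absorb the exponential tail into the product of denominators; the constant $(9k)^k$ emerges from tracking factors of $40k/9$ per index. You instead decouple the $k$ factors up front via generalized Hölder with all exponents equal to $k$, reducing the claim to a $k$-th moment bound for a single index, and then establish that bound by a three-way case split: trivial when $np+a \leq 9k$, deterministic when $a$ dominates, and multiplicative Chernoff plus the standard maximization $\max_x x^k e^{-x/16} = (16k/e)^k$ when $np$ dominates. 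Your route is more modular—the single-index $k$-th-moment estimate is a reusable lemma in its own right, and the case split cleanly isolates the regime where concentration of $N$ is useless because $a$ already dominates the denominator. The paper's route is shorter in total steps but relies on an algebraic sleight of hand (trading the exponential tail for a fractional shift $\frac{9np}{40k}$ in each denominator) that is less transparent. Both arrive at the same constant $(9k)^k$. One small arithmetic point worth pinning down in a write-up: in your sub-case $a < m/2$ on the event $\{N \geq np/2\}$, the bound $N + a \geq np/2 + a = m/2 + a/2 \geq m/2$ is what gives $1/(N+a) \leq 2/m$; this is correct but the intermediate identity $np/2 + a = m/2 + a/2$ should be stated explicitly since the naive lower bound $np/2 + a \geq np/2 > m/4$ would only give $4/m$ and force a slightly larger final constant.
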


\begin{proof}[Lemma~\ref{lem:binomial_bound}]
  By Bernstein's inequality,
  $\P\left( N \leq n p - t \right)
  \leq \exp\left(-\frac{3t^2}{6n p + 2t}\right)$.
  Therefore we have
  $\P\left( N \leq n p/4 \right)
  \leq \exp\left(-\frac{27 n^2 p^2 / 16}{6n p + 3 n p / 2}\right)
  = e^{-9 n p / 40}$.
  Partitioning by this event gives
  \begin{align*}
    \E\left[
      \prod_{j=1}^k
      \left(
        1 \wedge
        \frac{1}{N + a_j}
      \right)^{l_j}
    \right]
    &\leq
    e^{-9 n p / 40}
    \prod_{j=1}^k
    \frac{1}{1 \vee a_j^{l_j}}
    + \prod_{j=1}^k
    \frac{1}{1 \vee (\frac{n p}{4} + a_j)^{l_j}} \\
    &\leq
    \prod_{j=1}^k
    \frac{1}{1 \vee \left(\frac{9 n p}{40 k l_j} + a_j\right)^{l_j}}
    + \prod_{j=1}^k
    \frac{1}{1 \vee (\frac{n p}{4} + a_j)^{l_j}} \\
    &\leq
    2 \prod_{j=1}^k
    \frac{1}{1 \vee \left(\frac{9 n p}{40 k l_j} + a_j\right)^{l_j}}
    \leq
    2 \prod_{j=1}^k
    \frac{(40 k l_j /9)^{l_j}}{1 \vee \left(n p + a_j\right)^{l_j}} \\
    &\leq
    (9 L)^{2 L}
    \prod_{j=1}^k
    \left(
      1 \wedge
      \frac{1}{n p + a_j}
    \right)^{l_j}.
  \end{align*}
\end{proof}

Our next result is probably the most technically involved in the paper,
allowing one to bound moments of
(products of) $\frac{\I_b(x)}{N_b(x)}$ by the corresponding moments of
(products of) $\frac{1}{n |T_b(x)|}$, again based on the heuristic
that $N_b(x)$ is conditionally binomial so concentrates around
its conditional expectation
$n \int_{T_b(x)} f(x) \diffi s \asymp n |T_b(x)|$.
By independence of the trees,
the latter expected products then factorize
since the dependence on the data $X_i$ has been eliminated.
The proof is complicated, and relies on the following induction procedure.
First we consider the common refinement consisting of the
subcells $\cR$ generated by all possible intersections
of $T_b(x)$ over the selected trees
(say $T_{b}(x), T_{b'}(x), T_{b''}(x)$
though there could be arbitrarily many).
Note that $N_b(x)$ is the sum of the number of
samples $X_i$ in each such subcell in $\cR$.
We then apply Lemma~\ref{lem:binomial_bound} repeatedly
to each subcell in $\cR$ in turn, replacing
the number of samples $X_i$ in that subcell with its volume
multiplied by the sample size $n$,
and controlling the error incurred at each step.
We record the subcells which have been ``checked'' in this manner
using the class $\cD \subseteq \cR$ and proceed by finite induction,
beginning with $\cD = \emptyset$ and ending at $\cD = \cR$.

\begin{lemma}[Generalized moment bound for
  Mondrian random forest denominators]%
  \label{lem:moment_denominator}

  Suppose Assumptions~\ref{ass:data}
  and~\ref{ass:estimator} hold.
  Let $T_b \sim \cM\big([0,1]^d, \lambda\big)$
  be independent and $k_b \geq 1$ for $1 \leq b \leq B_0$.
  Then with $k = \sum_{b=1}^{B_0} k_b$,
  for sufficiently large $n$,
  \begin{align*}
    \E\left[
      \prod_{b=1}^{B_0}
      \frac{\I_b(x)}{N_b(x)^{k_b}}
    \right]
    &\leq
    \left( \frac{36k}{\inf_{x \in [0,1]^d} f(x)} \right)^{2^{2k}}
    \prod_{b=1}^{B_0}
    \E \left[
      1 \wedge
      \frac{1}{(n |T_b(x)|)^{k_b}}
    \right].
  \end{align*}
\end{lemma}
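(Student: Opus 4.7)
The plan is to fix the forest $\bT$ and pass to the common refinement
$\cR = \{A_S : \emptyset \neq S \subseteq \{1,\ldots,B_0\}\}$
of the cells, where $A_S = \bigcap_{b \in S} T_b(x) \setminus \bigcup_{b \notin S} T_b(x)$,
supplemented with the complement atom $A_\emptyset = [0,1]^d \setminus \bigcup_b T_b(x)$.
Writing $n_R = \#\{i : X_i \in R\}$ and $p_R = \int_R f(s) \diff s$, the counts
$(n_R)_{R \in \cR \cup \{A_\emptyset\}}$ are jointly multinomial with total $n$ and probabilities $p_R$
conditional on $\bT$, and $N_b(x) = \sum_{S \ni b} n_{A_S}$. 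Since $|\cR| \leq 2^{B_0}$,
the core reduction is to iteratively replace each atom count $n_{A_S}$ by its mean $n p_{A_S}$,
one atom at a time.

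I would set up a finite induction over a growing set $\cD \subseteq \cR$ of processed atoms, tracking
$G(\cD) = \E\bigl[\prod_b (1 \wedge 1/M_b(\cD))^{k_b} \bigm| \bT\bigr]$
with $M_b(\cD) = \sum_{S \ni b,\, A_S \in \cD} n p_{A_S} + \sum_{S \ni b,\, A_S \notin \cD} n_{A_S}$,
so that $G(\emptyset)$ dominates the conditional version of the target quantity and $G(\cR)$ is
deterministic given $\bT$. For a single swap $\cD \to \cD \cup \{A_{S^\star}\}$ I would condition on
$\cF = \sigma(\bT, (n_R)_{R \neq A_{S^\star}, A_\emptyset})$. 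Under $\cF$, the pair
$(n_{A_{S^\star}}, n_{A_\emptyset})$ allocates the remaining $n' := n - \sum_{R \neq A_{S^\star}, A_\emptyset} n_R$
samples binomially with success probability $q = p_{A_{S^\star}}/(p_{A_{S^\star}} + p_{A_\emptyset})$,
and the shifts $a_b := M_b(\cD) - n_{A_{S^\star}}\I\{b \in S^\star\}$ are $\cF$-measurable.
Lemma~\ref{lem:binomial_bound} then applies pathwise and gives
$G(\cD) \leq (9k)^k \cdot \E[\prod_b (1 \wedge 1/(a_b + n' q\, \I\{b \in S^\star\}))^{k_b} \mid \bT]$.
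Iterating over the at most $2^{B_0}$ atoms and taking an outer expectation over $\bT$
collapses the bound into $\prod_b \E[1 \wedge 1/(n \int_{T_b(x)} f)^{k_b}]$
by independence of the Mondrian trees, and the lower bound
$\int_{T_b(x)} f \geq (\inf f)\, |T_b(x)|$ yields the factorized estimate.

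The main obstacle is that the binomial mean $n' q$ produced by each swap does not equal the target
$n p_{A_{S^\star}}$ used to define $M_b(\cD \cup \{A_{S^\star}\})$: the two differ by the factor
$n(p_{A_{S^\star}} + p_{A_\emptyset})/n'$. I would control this on the high-probability event
$\{N_\cup(x) \leq n/2\}$ supplied by Lemma~\ref{lem:active_data}, on which $n' \geq n/2$ and
$q \geq p_{A_{S^\star}}$ (since $p_{A_{S^\star}} + p_{A_\emptyset} \leq 1$), so $n' q \geq n p_{A_{S^\star}}/2$,
absorbing a factor of $2^{k_b}$ per tree into the swap step. On the complementary event I would use
the trivial bound $\I_b(x)/N_b(x)^{k_b} \leq 1$, whose contribution is negligible by the tail estimate
of Lemma~\ref{lem:active_data}. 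Accumulating the $(9k)^k$ factor across the $2^{B_0}$ swaps together
with the $(\inf f)^{-k}$ incurred when passing from $\int_{T_b(x)} f$ to $|T_b(x)|$ yields the overall
constant $(36k/\inf f)^{2^{B_0} k}$.
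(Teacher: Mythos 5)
Your proposal is correct and follows essentially the same route as the paper's proof: iterate over the common refinement of the cells, at each step condition out one atom count and apply Lemma~\ref{lem:binomial_bound} to replace it by (a lower bound on) its conditional binomial mean, control the remaining sample size via Lemma~\ref{lem:active_data} on a high-probability event, then factorize by independence of the trees and pass from $\int_{T_b(x)} f$ to $|T_b(x)| \inf f$. The only cosmetic differences are the exact conditioning set (you condition on all counts except $n_{A_{S^\star}}$ and $n_{A_\emptyset}$, while the paper conditions on a smaller subset) and when the $(\inf f)^{-1}$ factor is incurred; you should also note that the accumulated tail terms from the $2^{B_0}$ swaps need to be absorbed back into the product, as the paper does with a Jensen-type lower bound.
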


\begin{proof}[Lemma~\ref{lem:moment_denominator}]

  Define the common refinement of
  $\left\{ T_b(x) : 1 \leq b \leq {B_0} \right\}$ as
  the class of sets
  \begin{align*}
    \cR
    &= \left\{ \bigcap_{b=1}^{{B_0}} D_b :
      D_b \in
      \big\{ T_b(x), T_b(x)^\c \big\}
    \right\}
    \bigsetminus
    \left\{
      \emptyset,\,
      \bigcap_{b=1}^{{B_0}}
      T_b(x)^\c
    \right\}
  \end{align*}
  where $T_b(x)^\c = [0, 1]^d \setminus T_b(x)$,
  and let $\cD \subset \cR$.
  We will proceed by induction on the elements of $\cD$,
  which represents the subcells we have checked,
  starting from $\cD = \emptyset$ and finishing at $\cD = \cR$.
  For $D \in \cR$ let
  $\cA(D) = \left\{ 1 \leq b \leq {B_0} : D \subseteq T_b(x) \right\}$
  be the indices of the trees which are active on subcell $D$,
  and for $1 \leq b \leq {B_0}$ let
  $\cA(b) = \left\{ D \in \cR : D \subseteq T_b(x) \right\}$
  be the subcells which are contained in $T_b(x)$,
  so that $b \in \cA(D) \iff D \in \cA(b)$.
  For a subcell $D \in \cR$,
  write $N_b(D) = \sum_{i=1}^{n} \I \left\{ X_i \in D \right\}$
  so that $N_b(x) = \sum_{D \in \cA(b)} N_b(D)$.
  Note that for any $D \in \cR \setminus \cD$,
  \begin{align*}
    &\E \left[
      \prod_{b=1}^{B_0}
      \frac{1}{
        1 \vee \left(
          \sum_{D' \in \cA(b) \setminus \cD}
          N_b(D')
          + n \sum_{D' \in \cA(b) \cap \cD}
          |D'|
        \right)^{k_b}
      }
    \right] \\
    &=
    \E \left[
      \prod_{b \notin \cA(D)}
      \frac{1}{
        1 \vee \left(
          \sum_{D' \in \cA(b) \setminus \cD}
          N_b(D')
          + n \sum_{D' \in \cA(b) \cap \cD}
          |D'|
        \right)^{k_b}
      } \right. \\
      &\left.
      \ \times\,\E\hspace*{-1mm}\left[
        \prod_{b \in \cA(D)}
        \frac{1}{
          1 \vee \left(
            \sum_{D' \in \cA(b) \setminus \cD}
            N_b(D')
            + n \sum_{D' \in \cA(b) \cap \cD}
            |D'|
          \right)^{k_b}
        }
        \biggm|
        \bT,
        N_b(D') : D' \in \cR
        \hspace*{-1mm}
        \setminus
        \hspace*{-1mm}
        (\cD \cup \{D\})
      \right]
    \right]\hspace*{-1mm}.
  \end{align*}
  Now the inner conditional expectation is over $N_b(D)$ only.
  Since $f$ is bounded away from zero,
  \begin{align*}
    N_b(D)
    &\sim \Bin\left(
      n - \sum_{D' \in \cR \setminus (\cD \cup \{D\})} N_b(D'), \
      \frac{\int_{D} f(s) \diffi s}
      {1 - \int_{\bigcup \left( \cR \setminus \cD \right) \setminus D}
      f(s) \diffi s}
    \right) \\
    &\geq \Bin\left(
      n - \sum_{D' \in \cR \setminus (\cD \cup \{D\})} N_b(D'), \
      |D| \inf_{x \in [0,1]^d} f(x)
    \right)
  \end{align*}
  conditional on $\bT$ and
  $N_b(D') : D' \in \cR \setminus (\cD \cup \{D\})$.
  Further, by Lemma~\ref{lem:active_data} with $n \geq \lambda^d$,
  \begin{align*}
    \P \left(
      \sum_{D' \in \cR \setminus (\cD \cup \{D\})} N_b(D')
    > t^{d+1} \frac{n}{\lambda^d} \|f\|_\infty \right)
    &\leq
    \P \left( N_{\cup}(x) > t^{d+1}
      \frac{n}{\lambda^d}
      \|f\|_\infty
    \right)
    \leq
    4 d B_0 e^{-t/4}.
  \end{align*}
  Thus
  $N_b(D) \geq \Bin(n/2, |D| \inf_x f(x))$
  conditional on
  $\left\{ \bT, N_b(D') : D' \in \cR \setminus (\cD \cup \{D\}) \right\}$
  with probability at least
  $1 - 4 d B_0 e^{\frac{-\sqrt \lambda}{8 \|f\|_\infty}}$.
  So by Lemma~\ref{lem:binomial_bound},
  \begin{align*}
    &\E \left[
      \prod_{b \in \cA(D)}
      \frac{1}{
        1 \vee \left(
          \sum_{D' \in \cA(b) \setminus \cD}
          N_b(D')
          + n \sum_{D' \in \cA(b) \cap \cD}
          |D'|
        \right)^{k_b}
      }
      \biggm|
      \bT,
      N_b(D') : D' \in \cR \setminus (\cD \cup \{D\})
    \right] \\
    &\quad\leq
    (9k)^{2k} \,
    \E \left[
      \prod_{b \in \cA(D)}
      \frac{1}{
        1 \vee \left(
          \sum_{D' \in \cA(b) \setminus (\cD \cup \{D\})}
          N_b(D')
          + n |D| \inf_x f(x) / 2
          + n \sum_{D' \in \cA(b) \cap \cD}
          |D'|
      \right)^{k_b}}
    \right] \\
    &\qquad+
    4 d B_0 e^{\frac{-\sqrt \lambda}{8 \|f\|_\infty}} \\
    &\quad\leq
    \left( \frac{18k}{\inf_x f(x)} \right)^{2k}
    \E
    \left[
      \prod_{b \in \cA(D)}
      \frac{1}{
        1 \vee \left(
          \sum_{D' \in \cA(b) \setminus (\cD \cup \{D\})}
          N_b(D')
          + n \sum_{D' \in \cA(b) \cap (\cD \cup \{D\})}
          |D'|
      \right)^{k_b}}
    \right] \\
    &\qquad+
    4 d B_0 e^{\frac{-\sqrt \lambda}{8 \|f\|_\infty}}.
  \end{align*}
  Therefore plugging this back into the marginal expectation yields
  \begin{align*}
    &\E\left[
      \prod_{b=1}^{B_0}
      \frac{1}{
        1 \vee \left(
          \sum_{D' \in \cA(b) \setminus \cD}
          N_b(D')
          + n \sum_{D' \in \cA(b) \cap \cD}
          |D'|
        \right)^{k_b}
      }
    \right] \\
    &\quad\leq
    \left( \frac{18k}{\inf_x f(x)} \right)^{2k}
    \E \left[
      \prod_{b=1}^{B_0}
      \frac{1}{
        1 \vee \left(
          \sum_{D' \in \cA(b) \setminus (\cD \cup \{D\})}
          N_b(D')
          + n \sum_{D' \in \cA(b) \cap (\cD \cup \{D\})}
          |D'|
      \right)^{k_b}}
    \right] \\
    &\qquad+
    4 d B_0 e^{\frac{-\sqrt \lambda}{8 \|f\|_\infty}}.
  \end{align*}
  Now we apply induction,
  starting with $\cD = \emptyset$ and
  adding $D \in \cR \setminus \cD$ to $\cD$ until
  $\cD = \cR$.
  This takes at most $|\cR| \leq 2^k$ steps and yields
  \begin{align*}
    \E\left[
      \prod_{b=1}^{B_0}
      \frac{\I_b(x)}{N_b(x)^{k_b}}
    \right]
    &\leq
    \E\left[
      \prod_{b=1}^{B_0}
      \frac{1}{1 \vee N_b(x)^{k_b}}
    \right]
    =
    \E\left[
      \prod_{b=1}^{B_0}
      \frac{1}{1 \vee \left( \sum_{D \in \cA(b)} N_b(D) \right)^{k_b}}
    \right]
    \leq \cdots \\
    &\leq
    \left( \frac{18k}{\inf_x f(x)} \right)^{2^{2k}}
    \left(
      \prod_{b=1}^{B_0}
      \,\E \left[
        \frac{1}{1 \vee (n |T_b(x)|)^{k_b}}
      \right]
      + 4 d B_0 2^{k} e^{\frac{-\sqrt \lambda}{8 \|f\|_\infty}}
    \right),
  \end{align*}
  where the expectation factorizes due to independence of $T_b(x)$.
  The last step is to remove the trailing exponential term.
  To do this, note that by Jensen's inequality,
  \begin{align*}
    \prod_{b=1}^{B_0}
    \,\E \left[
      \frac{1}{1 \vee (n |T_b(x)|)^{k_b}}
    \right]
    &\geq
    \prod_{b=1}^{B_0}
    \frac{1}
    {\E \left[ 1 \vee (n |T_b(x)|)^{k_b} \right]}
    \geq
    \prod_{b=1}^{B_0}
    \frac{1}{n^{k_b}}
    = n^{-k}
  \end{align*}
  while the assumption of $\lambda \gtrsim (\log n)^3$ implies
  $\lambda \geq (\log n)^3 / C^2$ eventually for some $C > 0$, giving
  \begin{align*}
    4 d B_0 2^k e^{\frac{-\sqrt \lambda}{8 \|f\|_\infty}}
    \leq
    4 d B_0 2^k e^{\frac{ - (\log n)^{3/2}}{8 C \|f\|_\infty}}
    \leq
    4 d B_0 2^k e^{- k \log n - \log (4 d B_0 2^k)}
    \leq
    n^{-k}
  \end{align*}
  for sufficiently large $n$
  because $B_0$, $d$, and $k$ are fixed.
\end{proof}

Now that moments of (products of) $\frac{\I_b(x)}{N_b(x)}$
have been bounded by moments of
(products of) $\frac{1}{n |T_b(x)|}$, we establish further
explicit bounds for these in the next result.
Note that the problem has been reduced to determining
properties of Mondrian cells, so once again we return to the
exact cell shape distribution given by \citet{mourtada2020minimax},
and evaluate the appropriate expectations by integration.
Note that the truncation by taking the minimum with one inside the expectation
is essential here, as otherwise second moment of the inverse Mondrian cell
volume is not even finite. As such, there is a ``penalty'' of $(\log n)^d$
when bounding truncated second moments,
and the upper bound for the $k$th moment is significantly
larger than the naive assumption of $(\lambda^d / n)^k$
whenever $k > 2$.
This ``small cell'' phenomenon in which the inverse volumes of Mondrian cells
have heavy tails is a recurring challenge in our analysis.

\begin{lemma}[Inverse moments of the volume of a Mondrian cell]%
  \label{lem:moment_cell}

  Suppose Assumption~\ref{ass:estimator} holds
  and let $T \sim \cM\big([0,1]^d, \lambda\big)$.
  Then with $k \geq 1$, for sufficiently large $n$,
  \begin{align*}
    \E\left[
      1 \wedge
      \frac{1}{(n |T(x)|)^k}
    \right]
    &\leq
    \left(
      \frac{2}{2-k}
      \frac{\lambda^{d k}}{n^k}
    \right)^{\I \left\{ k < 2 \right\}}
    \left(
      \frac{3 \lambda^{2d} (\log n)^d}{n^2}
    \right)^{\I \left\{ k \geq 2 \right\}}
    \prod_{j=1}^{d} \frac{1}{x_j (1-x_j)}.
  \end{align*}
\end{lemma}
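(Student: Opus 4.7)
The plan is to use the explicit joint distribution of Mondrian cell side lengths established by \citet{mourtada2020minimax}, namely $|T(x)_j| = (E_{j1}/\lambda \wedge x_j) + (E_{j2}/\lambda \wedge (1-x_j))$ where all $2d$ exponentials are independent, together with the product structure $|T(x)| = \prod_{j=1}^d |T(x)_j|$ and the mutual independence across coordinates. The truncation by $1$ outside the expectation does not factorize over coordinates, so I will treat the cases $k=1$ and $k\geq 2$ separately.

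For $k=1$, I would drop the truncation, use $1 \wedge \frac{1}{n|T(x)|} \leq \frac{1}{n|T(x)|}$, and apply independence to reduce to
\begin{align*}
\E\left[1 \wedge \frac{1}{n|T(x)|}\right]
\leq \frac{1}{n} \prod_{j=1}^d \E\left[\frac{1}{|T(x)_j|}\right],
\end{align*}
then control each factor by the tail-integration identity $\E[1/|T(x)_j|] = \int_0^\infty \P(|T(x)_j| < s) s^{-2} \diff s$. The essential one-dimensional estimate is $\P(|T(x)_j| < s) \leq \P(U_j < s)\P(V_j < s) \leq \min(1, \lambda s)^2$ for $s$ below the boundary scale, which follows because $U_j + V_j < s$ with $U_j, V_j \geq 0$ forces both $U_j < s$ and $V_j < s$, together with independence of $E_{j1}$ and $E_{j2}$. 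Piecewise integration over the three regimes $s \leq x_j \wedge (1-x_j)$, $x_j \wedge (1-x_j) < s \leq x_j \vee (1-x_j)$, and $s > x_j \vee (1-x_j)$ (on which $\P \leq 1$ is used trivially) should produce the target bound $\E[1/|T(x)_j|] \lesssim \lambda/(x_j(1-x_j))$ for $n$ sufficiently large that $\lambda x_j \wedge \lambda(1-x_j) \geq 1$.

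For $k \geq 2$ I would first exploit the truncation to reduce to $k=2$: if $n|T(x)| \leq 1$ then $1 \wedge (n|T(x)|)^{-k} = 1 = 1 \wedge (n|T(x)|)^{-2}$, while if $n|T(x)| > 1$ then $(n|T(x)|)^{-k} \leq (n|T(x)|)^{-2}$, so it suffices to bound $\E[1 \wedge (n|T(x)|)^{-2}]$. I would then decompose
\begin{align*}
\E\left[1 \wedge \frac{1}{(n|T(x)|)^2}\right]
= \P\big(n|T(x)| \leq 1\big)
+ \E\left[\frac{1}{(n|T(x)|)^2} \I\{n|T(x)| > 1\}\right],
\end{align*}
and recast the second term via Fubini as $2\int_1^\infty u^{-3} \P(n|T(x)| \leq u) \diff u$. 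The probability $\P(|T(x)| \leq u/n)$ is controlled by iterating the one-dimensional estimate $\P(|T(x)_j| < t) \leq (\lambda t)^2 \wedge 1$ together with the explicit densities of the $|T(x)_j|$ to obtain a tail bound of the form $\P(|T(x)| \leq s) \lesssim (\lambda^d s)^2 \cdot \polylog(1/(\lambda^d s))$. Integrating this against $u^{-3}$ on $[1, \sqrt{2}n/\lambda^d]$ (the range where the quadratic estimate is active) yields an integrand proportional to $u^{-1}$, giving the logarithmic factor; on the outer range the integrand contributes only a negligible polynomial term.

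The main obstacle is the tension between the product structure (which factorizes under independence) and the outer truncation (which does not), and in particular the emergence of the $\log n$ factor, which reflects the fact that $\E[|T(x)_j|^{-k}] = \infty$ without truncation for $k \geq 2$. This polylogarithmic term must be extracted cleanly from the tail integral. The boundary-correction factor $\prod_j 1/(x_j(1-x_j))$ arises naturally when piecing together the estimates over the three regimes above, and relies on the large-$n$ assumption $\lambda \min(x_j, 1-x_j) \gtrsim 1$ to avoid degeneracies when $x$ approaches the boundary of $[0,1]^d$.
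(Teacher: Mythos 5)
Your proposal follows the same high-level blueprint as the paper (exact Mondrian cell-shape distribution, independence across coordinates, reducing $k\geq 3$ to $k=2$ via the trivial monotonicity $1\wedge y^{-k}\leq 1\wedge y^{-2}$), but the technical vehicle for handling the truncation differs. The paper introduces a scalar threshold $t$, works on the event $\{\min_j(E_{j1}+E_{j2})\geq t\}$ (whose indicator factorizes over $j$), reduces to the one-dimensional computation $\E\big[\I\{S_j\geq t\}(S_j^k\wedge 1)^{-1}\big]$ per coordinate, and then optimizes $t$ ($t\to 0$ for $k=1$, $t=1/n^2$ for $k=2$). You instead keep the product $|T(x)|=\prod_j|T(x)_j|$ intact, split on $\{n|T(x)|\leq 1\}$, and apply Fubini/tail integration to $\E[Y^{-2}\I\{Y>1\}]$. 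For $k=1$ your route is clean and your one-dimensional estimate $\E[1/|T(x)_j|]\lesssim\lambda$ is correct and in fact slightly sharper (in the $x$-dependence) than the stated bound. What the paper's route buys is that it never needs a lower-tail estimate for the full product $\prod_j|T(x)_j|$ --- each coordinate is handled separately under a shared truncation --- whereas your route requires one.

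For $k=2$ there is a gap you should be careful about. You correctly anticipate that $\P(|T(x)|\leq s)\lesssim(\lambda^d s)^2\cdot\polylog$, but you should pin down the polylog: the density of $\Gamma(2,1)$ near $0$ is $\propto s$, so by convolving in log-coordinates the density of $\prod_{j=1}^d S_j$ near $0$ is $\propto s\,(\log(1/s))^{d-1}$, giving $\P\big(\prod_j S_j\leq s\big)\asymp s^2(\log(1/s))^{d-1}$. Feeding this into $\int_1^{\sim n/\lambda^d}u^{-3}\P(n|T(x)|\leq u)\,\mathrm{d}u$, the $u^{-1}$ integrand accumulates one more logarithm, so your approach yields $\frac{\lambda^{2d}}{n^2}(\log n)^d$, not $\frac{\lambda^{2d}\log n}{n^2}$ as stated in the lemma. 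So when you write ``giving the logarithmic factor'' you should check whether a single power of $\log n$ is actually what you get --- it is not, when $d\geq 2$. Note that the paper's own factorization, carried out literally (a product over $j=1,\dots,d$ of $\E[\I\{S_j\geq t\}(S_j^2\wedge 1)^{-1}]\lesssim \log(1/t)$), also produces $(\log(1/t))^d$; the single power of $\log n$ appearing in the final displayed bound and the lemma statement therefore looks like a slip that neither proof can avoid for $d\geq 2$. This does not affect the downstream rates in the paper since only polylog control is ever needed, but you should be upfront that your method delivers $(\log n)^d$ and that this is what the argument actually supports.
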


\begin{proof}[Lemma~\ref{lem:moment_cell}]

  By \citet[Proposition~1]{mourtada2020minimax}, we have
  \begin{align*}
    |T(x)| = \prod_{j=1}^{d}
    \left\{
      \left(\frac{1}{\lambda} E_{j1} \right) \wedge x_j
      + \left( \frac{1}{\lambda} E_{j2} \right) \wedge (1-x_j)
    \right\},
  \end{align*}
  where $E_{j1}$ and $E_{j2}$
  are mutually independent $\Exp(1)$ random variables.
  Thus for any $0<t<1$,
  using the fact that $E_{j1} + E_{j2} \sim \Gam(2, 1)$,
  \begin{align*}
    \E \left[
      \frac{1}{1 \vee (n |T(x)|)^k}
    \right]
    &\leq
    \frac{1}{n^k}
    \E \left[
      \frac{\I\{\min_j (E_{j1} + E_{j2}) \geq t\}}{|T(x)|^k}
    \right]
    + \P \left(\min_{1 \leq j \leq d} (E_{j1} + E_{j2}) < t\right) \\
    &\leq
    \frac{1}{n^k}
    \prod_{j=1}^d
    \E \left[
      \frac{\I\{E_{j1} + E_{j2} \geq t\}}
      {\left(\frac{1}{\lambda} E_{j1} \wedge x_j
      + \frac{1}{\lambda} E_{j2} \wedge (1-x_j)\right)^k}
    \right]
    + d\, \P \left(E_{j1} < t\right) \\
    &\leq
    \frac{\lambda^{d k}}{n^k}
    \prod_{j=1}^d
    \frac{1}{x_j(1-x_j)}
    \E \left[
      \frac{\I\{E_{j1} + E_{j2} \geq t\}}
      {(E_{j1} +  E_{j2})^k \wedge 1}
    \right]
    + d (1 - e^{-t}) \\
    &\leq
    \frac{\lambda^{d k}}{n^k}
    \prod_{j=1}^d
    \frac{1}{x_j(1-x_j)}
    \left(
      \int_{t}^{1}
      \frac{e^{-s}}{s^{k-1}}
      \diffi s
      + \int_{1}^{\infty}
      s e^{-s}
      \diffi s
    \right)
    + d t \\
    &\leq
    \frac{\lambda^{d k}}{n^k}
    \prod_{j=1}^d
    \frac{1}{x_j(1-x_j)}
    \left(
      \int_{t}^{1}
      s^{1-k}
      \diffi s
      + 1
    \right)
    + d t \\
    &=
    d t
    + \frac{\lambda^{d k}}{n^k}
    \prod_{j=1}^d
    \frac{1}{x_j(1-x_j)}
    \times
    \begin{cases}
      1 + \frac{1}{2-k} - \frac{t^{2-k}}{2-k} & \text{if } 1 \leq k < 2, \\
      1 - \log t & \text{if } k = 2.
    \end{cases}
  \end{align*}
  If $k>2$ we simply use
  $\frac{1}{1 \vee (n |T(x)|)^k} \leq \frac{1}{1 \vee (n |T(x)|)^{2}}$.
  Now if $1 \leq k < 2$ we let $t \to 0$, giving
  \begin{align*}
    \E \left[
      \frac{1}{1 \vee (n |T(x)|)^k}
    \right]
    &\leq
    \frac{2}{2-k}
    \frac{\lambda^{d k}}{n^k}
    \prod_{j=1}^d
    \frac{1}{x_j(1-x_j)},
  \end{align*}
  and if $k = 2$ then we set $t = 1/n^2$ so that for sufficiently large $n$,
  \begin{align*}
    \E \left[
      \frac{1}{1 \vee (n |T(x)|)^2}
    \right]
    &\leq
    \frac{d}{n^2}
    + \frac{\lambda^{2d} \big(1 + 2 (\log n)^d\big)}{n^2}
    \prod_{j=1}^d
    \frac{1}{x_j(1-x_j)}
    \leq
    \frac{3 \lambda^{2d} (\log n)^d}{n^2}
    \prod_{j=1}^d
    \frac{1}{x_j(1-x_j)}.
  \end{align*}
  Lower bounds which match up to constants for $1 \leq k < 2$
  are easily obtained by noting
  $\E \left[ 1 \wedge \frac{1}{(n|T(x)|)^k} \right]
  \geq \E \left[ 1 \wedge \frac{1}{n|T(x)|} \right]^k$
  by Jensen's inequality and
  \begin{align*}
    \E \left[ 1 \wedge \frac{1}{n|T(x)|} \right]
    &\geq \frac{1}{1 + n \E \left[ |T(x)| \right]}
    \geq \frac{1}{1 + 2^d n / \lambda^d}
    \gtrsim \frac{\lambda^d}{n}.
  \end{align*}
  To obtain a lower bound when $k=2$, note that
  \begin{align*}
    \E \left[
      \frac{1}{1 \vee (n |T(x)|)^2}
    \right]
    &\geq
    \E \left[
      1 \wedge
      \frac{\lambda^d}{n^2}
      \prod_{j=1}^{d}
      \frac{1}{(E_{j1} + E_{j2})^2}
    \right]
    \geq
    \frac{\lambda^d}{n^2}
    \E \left[
      \frac{1}{(E_{1} + E_{2})^2 \vee n^{-1/d}}
    \right]^d \\
    &\geq
    \frac{\lambda^d}{n^2}
    \left(
      \int_{n^{-\frac{1}{2d}}}^{1} \frac{e^{-s}}{s} \diffi s
    \right)^d
    \geq
    \frac{\lambda^d}{n^2}
    \frac{1}{e}
    \left(
      \int_{n^{-\frac{1}{2d}}}^{1} \frac{1}{s} \diffi s
    \right)^d
    \geq
    \frac{\lambda^d}{n^2}
    \frac{1}{e}
    \left(
      \frac{1}{2d}
      \log n
    \right)^d.
  \end{align*}
\end{proof}

The ongoing endeavor to bound
moments of (products of) $\frac{\I_b(x)}{N_b(x)}$
is concluded with the next result,
chaining together the previous two lemmas to provide an
explicit bound with no expectations on the right-hand side.

\begin{lemma}[Simplified generalized moment bound for
  Mondrian random forest denominators]%
  \label{lem:simple_moment_denominator}
  Grant Assumptions~\ref{ass:data}
  and~\ref{ass:estimator}.
  Let $T_b \sim \cM\big([0,1]^d, \lambda\big)$
  and $k_b \geq 1$ for $1 \leq b \leq B_0$.
  Then with $k = \sum_{b=1}^{B_0} k_b$,
  for sufficiently large $n$,
  \begin{align*}
    &\E\left[
      \prod_{b=1}^{B_0}
      \frac{\I_b(x)}{N_b(x)^{k_b}}
    \right] \\
    &\quad\leq
    \left( \frac{36k}{\inf_{x \in [0,1]^d} f(x)} \right)^{2^{2k}}
    \left(
      \prod_{j=1}^{d} \frac{1}{x_j (1-x_j)}
    \right)^{B_0}
    \prod_{b=1}^{B_0}
    \left(
      \frac{2}{2-k_b}
      \frac{\lambda^{d k_b}}{n^{k_b}}
    \right)^{\I \left\{ k_b < 2 \right\}}
    \left(
      \frac{3 \lambda^{2d} (\log n)^d}{n^2}
    \right)^{\I \left\{ k_b \geq 2 \right\}}.
  \end{align*}
\end{lemma}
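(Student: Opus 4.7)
The plan is to prove Lemma~\ref{lem:simple_moment_denominator} by chaining together the two previous results: first apply Lemma~\ref{lem:moment_denominator} to reduce the expectation involving the data-dependent quantities $\I_b(x)/N_b(x)^{k_b}$ to a product of purely geometric moments of the Mondrian cell volumes, and then apply Lemma~\ref{lem:moment_cell} to each of these factors in turn.

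More concretely, I would first invoke Lemma~\ref{lem:moment_denominator} to write
\begin{align*}
  \E\left[\prod_{b=1}^{B_0} \frac{\I_b(x)}{N_b(x)^{k_b}}\right]
  \leq \left(\frac{36k}{\inf_x f(x)}\right)^{2^{B_0} k}
  \prod_{b=1}^{B_0} \E\left[1 \wedge \frac{1}{(n|T_b(x)|)^{k_b}}\right].
\end{align*}
Then for each index $b$, I would apply Lemma~\ref{lem:moment_cell} (valid for $n$ large enough that both prior lemmas apply simultaneously), yielding
\begin{align*}
  \E\left[1 \wedge \frac{1}{(n|T_b(x)|)^{k_b}}\right]
  \leq \left(\prod_{j=1}^{d} \frac{1}{x_j(1-x_j)}\right)
  \left(\frac{\lambda^d}{n}\right)^{\I\{k_b=1\}}
  \left(\frac{3\lambda^{2d}\log n}{n^2}\right)^{\I\{k_b\geq 2\}}.
\end{align*}
Multiplying these $B_0$ bounds produces the factor $\left(\prod_{j=1}^d 1/(x_j(1-x_j))\right)^{B_0}$ together with the desired product of $\lambda^d/n$ and $\lambda^{2d}\log n/n^2$ terms, along with an additional harmless multiplicative constant at most $3^{B_0}$.

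The only remaining issue is constant bookkeeping: the factor $3^{B_0}$ coming from repeated use of Lemma~\ref{lem:moment_cell} must be absorbed into the stated prefactor $(36k/\inf_x f(x))^{2^{B_0}k}$ so that the final bound retains the form $\left(\lambda^{2d}\log n/n^2\right)^{\I\{k_b\geq 2\}}$ without an explicit 3. This is routine since $3^{B_0} \leq 3^k \leq 4^k \leq 2^{2^{B_0}k}$ for all $B_0,k \geq 1$, and this slack is clearly available in the prefactor (essentially doubling the constant from 18 in the derivation of Lemma~\ref{lem:moment_denominator} to 36 in its statement already leaves room for this absorption). I do not anticipate any substantive obstacle here — all of the technically delicate work, namely the induction over the common refinement of cells and the inverse-moment calculations for exponential-based cell volumes, has already been handled in the two cited lemmas — so this proof is essentially a two-line composition followed by a constants check.
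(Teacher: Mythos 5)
Your proposal is correct and matches the paper's proof exactly: the paper's proof of Lemma~\ref{lem:simple_moment_denominator} is a one-line chaining of Lemma~\ref{lem:moment_denominator} and Lemma~\ref{lem:moment_cell}. Your observation about absorbing the factor $3^{B_0}$ into the slack between the $18k$ appearing at the end of the proof of Lemma~\ref{lem:moment_denominator} and the $36k$ appearing in its statement is accurate and worth noting, since the statement of Lemma~\ref{lem:simple_moment_denominator} indeed drops the explicit $3$ from Lemma~\ref{lem:moment_cell}.
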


\begin{proof}[Lemma~\ref{lem:simple_moment_denominator}]
  This follows directly from
  Lemmas~\ref{lem:moment_denominator} and~\ref{lem:moment_cell}.
\end{proof}

Our final preliminary lemma is concerned with further properties of
the inverse truncated binomial distribution, again with the aim
of analyzing $\frac{\I_b(x)}{N_b(x)}$.
This time, instead of merely upper bounding the moments,
we aim to give convergence results for those moments,
again in terms of moments of $\frac{1}{n |T_b(x)|}$.
This time we only need to handle the first
and second moment, so this result does not strictly generalize
Lemma~\ref{lem:binomial_bound} except in simple cases.
The proof is by Taylor's theorem and the Cauchy--Schwarz inequality,
using explicit expressions for moments of the binomial distribution
and bounds from Lemma~\ref{lem:binomial_bound}.

\begin{lemma}[Expectation inequalities for the binomial distribution]%
  \label{lem:binomial_expectation}
  Let $N \sim \Bin(n, p)$ and take $a, b \geq 1$. Then
  \begin{align*}
    0
    &\leq
    \E \left[
      \frac{1}{N+a}
    \right]
    - \frac{1}{n p+a}
    \leq
    \frac{2^{19}}{(n p+a)^2}, \\
    0
    &\leq
    \E \left[
      \frac{1}{(N+a)(N+b)}
    \right]
    - \frac{1}{(n p+a)(n p+b)}
    \leq
    \frac{2^{27}}{(n p +a)(n p +b)}
    \left(
      \frac{1}{n p + a}
      + \frac{1}{n p + b}
    \right).
  \end{align*}

\end{lemma}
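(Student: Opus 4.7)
My plan is to establish both inequalities through a common algebraic identity and then apply Cauchy--Schwarz together with Lemma~\ref{lem:binomial_bound}. I would start from the elementary identity $\frac{1}{N+a} - \frac{1}{np+a} = \frac{np-N}{(N+a)(np+a)}$ and iterate it once inside the remainder to obtain the second-order Taylor-like expansion
$$\frac{1}{N+a} = \frac{1}{np+a} + \frac{np-N}{(np+a)^2} + \frac{(np-N)^2}{(np+a)^2(N+a)}.$$
Taking expectation and using $\E[N]=np$ annihilates the linear term, yielding
$$\E\left[\frac{1}{N+a}\right] - \frac{1}{np+a} = \frac{1}{(np+a)^2}\,\E\left[\frac{(np-N)^2}{N+a}\right].$$
The lower bound then follows immediately because the right-hand side is non-negative (equivalently, from Jensen's inequality applied to the convex function $x \mapsto 1/(x+a)$). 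For the upper bound I would apply Cauchy--Schwarz, bound the fourth central moment by $\E[(N-np)^4] \leq 3(np(1-p))^2 + np(1-p) \leq 4(np+a)^2$ (using $a \geq 1$ and $np(1-p) \leq np+a$), and invoke Lemma~\ref{lem:binomial_bound} with $k=2$ and $a_1=a_2=a$ to get $\E[1/(N+a)^2] \leq 324/(np+a)^2$. Multiplying these two bounds shows $\E[(np-N)^2/(N+a)]$ is dominated by an absolute constant, giving the first stated inequality.

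For the two-denominator inequality, I would apply the iterated identity above separately to $1/(N+a)$ and $1/(N+b)$, multiply the two expansions, and discard all odd-order terms in $(np-N)$: the linear pieces have zero expectation, and after using $(np+a)(np+b) - (N+a)(N+b) = (np-N)\bigl((np+a)+(N+b)\bigr)$ the cross-term $u'v' = (np-N)^2/((np+a)^2(np+b)^2)$ is bounded directly by $\E[(np-N)^2] = np(1-p)$. The surviving remainders split into three expectations of the form $\E[(np-N)^2/\mathrm{denom}]$ where the denominator involves either $N+a$, $N+b$, or both factors. For the single-factor remainders the argument from the first part gives a bound of order $\frac{36}{(np+a)(np+b)^2}$ and $\frac{36}{(np+a)^2(np+b)}$ respectively, matching the two terms on the right. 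For the mixed case I would apply Cauchy--Schwarz with Lemma~\ref{lem:binomial_bound} invoked at $k=4$ with $a_1=a_2=a$, $a_3=a_4=b$ to control $\E[1/((N+a)^2(N+b)^2)]$ by $C/((np+a)^2(np+b)^2)$, producing a bound of order $(np+a)^{-3/2}(np+b)^{-3/2}$, which by AM--GM (applied to $x=(np+a)^{-1}$, $y=(np+b)^{-1}$) is dominated by $\frac{1}{2(np+a)^2(np+b)} + \frac{1}{2(np+a)(np+b)^2}$, matching the symmetric form in the statement.

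The main obstacle will be verifying that $\E[(np-N)^2/(N+a)]$ is bounded by an absolute constant rather than by something that grows with $np$: naively the integrand is large when $N$ is small or when $N$ is far from $np$, and one needs both tail events to be suppressed simultaneously. The chosen Cauchy--Schwarz decomposition handles both regimes at once because the $(np+a)^2$ growth of $\E[(N-np)^4]$ exactly cancels the $(np+a)^{-2}$ decay from the inverse second moment supplied by Lemma~\ref{lem:binomial_bound}. This cancellation is what produces absolute constants rather than $(np+a)$-dependent bounds, and explains why the explicit numerical constants $2^{19}$ and $2^{27}$ in the statement are highly non-sharp: a constant on the order of a few hundred suffices, but using powers of two simplifies the bookkeeping and comfortably absorbs all the factors from Lemma~\ref{lem:binomial_bound}, the fourth-moment bound, and the AM--GM step.
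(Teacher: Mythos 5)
Your treatment of the first inequality is correct and is a mild simplification of the paper's argument: the paper applies Taylor's theorem with Lagrange remainder around $np$, which introduces an intermediate point $\xi$ and then requires the bound $(\xi+a)^{-3} \leq (np+a)^{-3} + (N+a)^{-3}$ before Cauchy--Schwarz; your iterated algebraic identity gives the exact remainder $\frac{(np-N)^2}{(np+a)^2(N+a)}$ directly, avoiding $\xi$ and yielding a single Cauchy--Schwarz step. Both routes then invoke the fourth central moment bound and Lemma~\ref{lem:binomial_bound}, so the strategy is the same and the constant you obtain (about $36$) comfortably fits inside $2^{19}$.

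Your treatment of the second inequality has a genuine gap. When you multiply the two three-term expansions of $1/(N+a)$ and $1/(N+b)$, the instruction to ``discard all odd-order terms in $(np-N)$'' is wrong for the cubic pieces. The purely linear terms $u_0 v_1$ and $u_1 v_0$ are deterministic constants times $(N-np)$ and hence do have zero expectation, but the cubic terms $u_1 v_2 = \frac{(np-N)^3}{(np+a)^2(np+b)^2(N+b)}$ and $u_2 v_1$ do not: these are odd in $(np-N)$ but also contain $N$ in the denominator, and in any case $\E[(N-np)^3] = np(1-p)(1-2p) \neq 0$ unless $p = 1/2$. These terms must be bounded, not dropped (a Cauchy--Schwarz argument via $\E[(N-np)^6]$ and a $k=4$ invocation of Lemma~\ref{lem:binomial_bound} will do it, but you do not carry it out). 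Your list of ``three expectations of the form $\E[(np-N)^2/\mathrm{denom}]$'' is also not the full accounting: beyond $u_0 v_0$ there are six terms with nonvanishing expectation, namely $u_1 v_1$, $u_0 v_2$, $u_2 v_0$, $u_1 v_2$, $u_2 v_1$, and $u_2 v_2$, of which only the first three are genuinely of second order in $(np-N)$.

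The lower bound in the second inequality is also not established by your decomposition. In the single-variable case, the algebraic remainder $\frac{(np-N)^2}{(np+a)^2(N+a)}$ is manifestly non-negative, so non-negativity is free. But the product of two such expansions produces a remainder that is a sum of terms with mixed signs (the cubic terms in particular), so non-negativity is no longer apparent. The paper gets it via the Lagrange form of the remainder, which is pointwise non-negative because $x \mapsto 1/((x+a)(x+b))$ is convex; you could instead invoke Jensen's inequality directly, as you already note for the first part, but you must say so explicitly, since it does not follow from the algebra you set out.
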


\begin{proof}[Lemma~\ref{lem:binomial_expectation}]

  For the first result,
  Taylor's theorem with Lagrange remainder
  applied to $N \mapsto \frac{1}{N+a}$ around $n p$ gives
  \begin{align*}
    \E \left[
      \frac{1}{N+a}
    \right]
    &=
    \E \left[
      \frac{1}{n p+a}
      - \frac{N - n p}{(n p+a)^2}
      + \frac{(N - n p)^2}{(\xi+a)^3}
    \right]
  \end{align*}
  for some $\xi$ between $n p$ and $N$.
  The second term on the right-hand side is zero-mean,
  clearly showing the non-negativity part of the result,
  and applying the Cauchy--Schwarz inequality
  to the remaining term gives
  \begin{align*}
    \E \left[
      \frac{1}{N+a}
    \right]
    - \frac{1}{n p+a}
    &\leq
    \E \left[
      \frac{(N - n p)^2}{(n p+a)^3}
      + \frac{(N - n p)^2}{(N+a)^3}
    \right] \\
    &\leq
    \frac{\E\big[(N - n p)^2\big]}{(n p+a)^3}
    + \sqrt{
      \E\big[(N - n p)^4\big]
      \E \left[
        \frac{1}{(N+a)^6}
    \right]}.
  \end{align*}
  Now we use
  $\E\big[(N - n p)^4\big] \leq n p(1+3n p)$
  and apply Lemma~\ref{lem:binomial_bound} to see that
  \begin{align*}
    \E \left[
      \frac{1}{N+a}
    \right]
    - \frac{1}{n p+a}
    &\leq
    \frac{n p}{(n p+a)^3}
    + \sqrt{\frac{54^6 n p(1+3 n p)}{(n p + a)^6}}
    \leq
    \frac{2^{19}}{(n p+a)^2}.
  \end{align*}
  For the second result,
  Taylor's theorem applied to $N \mapsto \frac{1}{(N+a)(N+b)}$
  around $n p$ gives
  \begin{align*}
    \E \left[
      \frac{1}{(N+a)(N+b)}
    \right]
    &=
    \E \left[
      \frac{1}{(n p+a)(n p + b)}
      - \frac{(N - n p)(2 n p + a + b)}{(n p + a)^2 (n p + b)^2}
    \right] \\
    &\quad+
    \E \left[
      \frac{(N - n p)^2}{(\xi+a)(\xi+b)}
      \left(
        \frac{1}{(\xi + a)^2}
        + \frac{1}{(\xi + a)(\xi + b)}
        + \frac{1}{(\xi + b)^2}
      \right)
    \right]
  \end{align*}
  for some $\xi$ between $n p$ and $N$.
  The second term on the right-hand side is zero-mean,
  clearly showing the non-negativity part of the result,
  and applying the Cauchy--Schwarz inequality
  to the remaining term gives
  \begin{align*}
    \E \left[
      \frac{1}{(N+a)(N+b)}
    \right]
    - \frac{1}{n p+a}
    &\leq
    \E \left[
      \frac{2 (N - n p)^2}{(N+a)(N+b)}
      \left(
        \frac{1}{(N + a)^2}
        + \frac{1}{(N + b)^2}
      \right)
    \right] \\
    &\quad+
    \E \left[
      \frac{2 (N - n p)^2}{(n p +a)(n p +b)}
      \left(
        \frac{1}{(n p + a)^2}
        + \frac{1}{(n p + b)^2}
      \right)
    \right] \\
    &\leq
    \sqrt{
      4 \E \left[ (N - n p)^4 \right]
      \E \left[
        \frac{1}{(N + a)^6 (N+b)^2}
        + \frac{1}{(N + b)^6 (N+a)^2}
    \right]} \\
    &\quad+
    \frac{2 \E\big[(N - n p)^2\big]}{(n p +a)(n p +b)}
    \left(
      \frac{1}{(n p + a)^2}
      + \frac{1}{(n p + b)^2}
    \right).
  \end{align*}
  Now we use
  $\E\big[(N - n p)^4\big] \leq n p(1+3n p)$
  and apply Lemma~\ref{lem:binomial_bound} to see that
  \begin{align*}
    \E \left[
      \frac{1}{(N+a)(N+b)}
    \right]
    - \frac{1}{n p+a}
    &\leq
    \sqrt{
      \frac{4n p (1 + 3n p) \cdot 72^8}{(n p + a)^2 (n p + b)^2}
      \left(
        \frac{1}{(n p + a)^4}
        + \frac{1}{(n p + b)^4}
    \right)} \\
    &\quad+
    \frac{2 n p}{(n p +a)(n p +b)}
    \left(
      \frac{1}{(n p + a)^2}
      + \frac{1}{(n p + b)^2}
    \right) \\
    &\leq
    \frac{2^{27}}{(n p + a) (n p + b)}
    \left(
      \frac{1}{n p + a}
      + \frac{1}{n p + b}
    \right).
  \end{align*}
\end{proof}

\subsection{Proofs for Section~\ref{sec:inference}}
\label{sec:proofs_inference}

We give rigorous proofs of the bias and variance
characterizations, rate of convergence, central limit theorem,
variance estimation, and confidence interval validity
results for the Mondrian random forest estimator.

\begin{proof}[Lemma~\ref{lem:bias}]

  We begin by showing that
  $\E \left[ \hat \mu(x) \mid \bX, \bT \right]$
  converges to
  $\E \left[ \hat \mu(x) \mid \bX \right]$.

  \proofparagraph{Removing the dependence on the trees}

  By measurability and with $\mu(X_i) = \E[Y_i \mid X_i]$ almost surely,
  \begin{align*}
    \E \left[ \hat \mu(x) \mid \bX, \bT \right]
    - \mu(x)
    &=
    \frac{1}{B}
    \sum_{b=1}^B
    \sum_{i=1}^n \big( \mu(X_i) - \mu(x) \big)
    \frac{\I_{i b}(x)}{N_b(x)}.
  \end{align*}
  Now conditional on $\bX$,
  the terms in the outer sum depend only on $T_b$ so are i.i.d.
  Since $\mu \in \cH^{\betamu}$,
  \begin{align*}
    \Var \left[
      \E \left[ \hat \mu(x) \mid \bX, \bT \right]
      - \mu(x)
      \mid \bX
    \right]
    &\leq
    \frac{1}{B}
    \E \left[
      \left(
        \sum_{i=1}^n \big( \mu(X_i) - \mu(x) \big)
        \frac{\I_{i b}(x)}{N_b(x)}
      \right)^2
      \Bigm| \bX
    \right] \\
    &\lesssim
    \frac{1}{B}
    \E \left[
      \max_{1 \leq i \leq n}
      \left\{
        \I_{i b}(x)
        \big\| X_i - x \big\|_2^{2(1 \wedge \betamu)}
      \right\}
      \left(
        \sum_{i=1}^n
        \frac{\I_{i b}(x)}{N_b(x)}
      \right)^2
      \Bigm| \bX
    \right] \\
    &\lesssim
    \frac{1}{B}
    \sum_{j=1}^{d}
    \E \left[
      |T(x)_j|^{2(1 \wedge \betamu)}
    \right]
    \lesssim
    \frac{1}{\lambda^{2(1 \wedge \betamu)} B},
  \end{align*}
  where we used the law of $T(x)_j$ from
  \citet[Proposition~1]{mourtada2020minimax}.
  Hence
  \begin{align*}
    \E \left[
      \big(
        \E \left[ \hat \mu(x) \mid \bX, \bT \right]
        - \E \left[ \hat \mu(x) \mid \bX \right]
      \big)^2
    \right]
    &\lesssim
    \frac{1}{\lambda^{2(1 \wedge \betamu)} B}.
  \end{align*}

  \proofparagraph{Showing the conditional bias converges}

  Now $\E \left[ \hat\mu(x) \mid \bX \right]$
  is a non-linear function of the i.i.d.\ random variables $X_i$,
  so we use the Efron--Stein inequality
  \citep{efron1981jackknife} to bound its variance.
  Let $\tilde X_{i j} = X_i$ if $i \neq j$ and be an
  independent copy of $X_j$, denoted $\tilde X_j$, if $i = j$.
  Write $\tilde \bX_j = (\tilde X_{1j}, \ldots, \tilde X_{n j})$
  and similarly
  $\tilde \I_{i j b}(x) = \I \big\{ \tilde X_{i j} \in T_b(x) \big\}$
  and $N_{j b}(x) = \sum_{i=1}^{n} \tilde \I_{i j b}(x)$.
  \begin{align}
    \nonumber
    &\Var \left[
      \sum_{i=1}^{n}
      \big( \mu(X_i) - \mu(x) \big)
      \E \left[
        \frac{\I_{i b}(x)}{N_b(x)}
        \Bigm| \bX
      \right]
    \right] \\
    \nonumber
    &\quad\leq
    \frac{1}{2}
    \sum_{j=1}^{n}
    \E \left[
      \left(
        \sum_{i=1}^{n}
        \big( \mu(X_i) - \mu(x) \big)
        \E \left[
          \frac{\I_{i b}(x)}{N_b(x)}
          \Bigm| \bX
        \right]
        - \sum_{i=1}^{n}
        \left( \mu(\tilde X_{i j}) - \mu(x) \right)
        \E \left[
          \frac{\tilde \I_{i j b}(x)}{\tilde N_{j b}(x)}
          \Bigm| \tilde \bX_j
        \right]
      \right)^2
    \right] \\
    \nonumber
    &\quad\leq
    \frac{1}{2}
    \sum_{j=1}^{n}
    \E \left[
      \left(
        \sum_{i=1}^{n}
        \left(
          \big( \mu(X_i) - \mu(x) \big)
          \frac{\I_{i b}(x)}{N_b(x)}
          - \left( \mu(\tilde X_{i j}) - \mu(x) \right)
          \frac{\tilde \I_{i j b}(x)}{\tilde N_{j b}(x)}
        \right)
      \right)^2
    \right] \\
    \nonumber
    &\quad\leq
    \sum_{j=1}^{n}
    \E \left[
      \left(
        \sum_{i \neq j}
        \big( \mu(X_i) - \mu(x) \big)
        \left(
          \frac{\I_{i b}(x)}{N_b(x)} - \frac{\I_{i b}(x)}{\tilde N_{j b}(x)}
        \right)
      \right)^2
    \right] \\
    \label{eq:bias_efron_stein}
    &\qquad+ 2 \sum_{j=1}^{n}
    \E \left[
      \left( \mu(X_j) - \mu(x) \right)^2
      \frac{\I_{j b}(x)}{N_b(x)^2}
    \right].
  \end{align}
  For the first term in \eqref{eq:bias_efron_stein} to be non-zero,
  we must have $|N_b(x) - \tilde N_{j b}(x)| = 1$.
  Writing $N_{-j b}(x) = \sum_{i \neq j} \I_{i b}(x)$,
  we may assume by symmetry that
  $\tilde N_{j b}(x) = N_{-j b}(x)$ and $N_b(x) = N_{-j b}(x) + 1$,
  and also that $\I_{j b}(x) = 1$.
  Hence since $f$ is bounded and $\mu \in \cH^{\betamu}$,
  writing $\I_{-j b}(x) = \I \left\{ N_{-j b}(x) \geq 1 \right\}$,
  by the H{\"o}lder inequality, Lemma~\ref{lem:largest_cell}
  and Lemma~\ref{lem:simple_moment_denominator},
  \begin{align*}
    &\sum_{j=1}^{n}
    \E \left[
      \left(
        \sum_{i \neq j}
        \left( \mu(X_i) - \mu(x) \right)
        \left(
          \frac{\I_{i b}(x)}{N_b(x)} - \frac{\I_{i b}(x)}{\tilde N_{j b}(x)}
        \right)
      \right)^2
    \right] \\
    &\quad\lesssim
    \sum_{j=1}^{n}
    \E \left[
      \max_{1 \leq l \leq d}
      |T_b(x)_l|^{2(1 \wedge \betamu)}
      \left(
        \frac{\sum_{i \neq j}\I_{i b}(x) \I_{j b}(x)}
        {N_{-j b}(x)(N_{-j b}(x) + 1)}
      \right)^2
    \right]
    \lesssim
    \E \left[
      \max_{1 \leq l \leq d}
      |T_b(x)_l|^{2(1 \wedge \betamu)}
      \frac{\I_{b}(x)}{N_{b}(x)}
    \right] \\
    &\lesssim
    \E \left[
      \max_{1 \leq l \leq d}
      |T_b(x)_l|^{6(1 \wedge \betamu)}
    \right]^{1/3}
    \E \left[
      \frac{\I_{b}(x)}{N_{b}(x)^{3/2}}
    \right]^{2/3}
    \lesssim
    \frac{1}{\lambda^{2(1 \wedge \betamu)}}
    \frac{\lambda^d}{n}.
  \end{align*}
  For the second term in \eqref{eq:bias_efron_stein} we
  again use $\mu \in \cH^{\betamu}$ to see
  \begin{align*}
    \sum_{j=1}^{n}
    \E \left[
      \left( \mu(X_j) - \mu(x) \right)^2
      \frac{\I_{j b}(x)}{N_b(x)^2}
    \right]
    &\lesssim
    \E \left[
      \max_{1 \leq l \leq d}
      |T_b(x)_l|^{2(1 \wedge \betamu)}
      \frac{\I_{b}(x)}{N_b(x)}
    \right]
    \lesssim
    \frac{1}{\lambda^{2(1 \wedge \betamu)}}
    \frac{\lambda^{d}}{n}
  \end{align*}
  in the same manner. Hence
  \begin{align*}
    \Var \left[
      \sum_{i=1}^{n}
      \left( \mu(X_i) - \mu(x) \right)
      \E \left[
        \frac{\I_{i b}(x)}{N_b(x)}
        \Bigm| \bX
      \right]
    \right]
    &\lesssim
    \frac{1}{\lambda^{2(1 \wedge \betamu)}}
    \frac{\lambda^{d}}{n},
  \end{align*}
  and so by the above and the previous part,
  \begin{align*}
    \E \left[
      \big(
        \E \left[ \hat \mu(x) \mid \bX, \bT \right]
        - \E \left[ \hat \mu(x) \right]
      \big)^2
    \right]
    &\lesssim
    \frac{1}{\lambda^{2(1 \wedge \betamu)} B}
    + \frac{1}{\lambda^{2(1 \wedge \betamu)}}
    \frac{\lambda^{d}}{n}.
  \end{align*}

  \proofparagraph{Computing the limiting bias}

  It remains to compute the limiting value of
  $\E \left[ \hat \mu(x) \right] - \mu(x)$.
  Let $\bX_{-i} = (X_1, \ldots, X_{i-1}, X_{i+1}, \ldots, X_n)$
  and $N_{-i b}(x) = \sum_{j=1}^n \I\{j \neq i\} \I\{X_j \in T_b(x)\}$.
  Then
  \begin{align*}
    \E \left[ \hat \mu(x) \right]
    - \mu(x)
    &=
    \E \left[
      \sum_{i=1}^{n}
      \left( \mu(X_i) - \mu(x) \right)
      \frac{\I_{i b}(x)}{N_b(x)}
    \right]
    =
    \sum_{i=1}^{n}
    \E \left[
      \E \left[
        \frac{\left( \mu(X_i) - \mu(x) \right)\I_{i b}(x)}
        {N_{-i b}(x) + 1}
        \bigm| \bT, \bX_{-i}
      \right]
    \right] \\
    &=
    n \,
    \E \left[
      \frac{\int_{T_b(x)} \left( \mu(s) - \mu(x) \right) f(s) \diffi s}
      {N_{-i b}(x) + 1}
    \right].
  \end{align*}
  By Lemma~\ref{lem:binomial_expectation}, as
  $N_{-i b}(x) \sim \Bin\left(n-1, \int_{T_b(x)} f(s) \diffi s \right)$
  given $\bT$ and $f$ is bounded away from zero,
  \begin{align*}
    \left|
    \E \left[
      \frac{1}{N_{-i b}(x) + 1}
      \Bigm| \bT
    \right]
    - \frac{1}{(n-1) \int_{T_b(x)} f(s) \diffi s + 1}
    \right|
    &\lesssim
    \frac{1}{n^2 \left( \int_{T_b(x)} f(s) \diffi s \right)^2}
    \wedge 1
    \lesssim
    \frac{1}{n^2 |T_b(x)|^2}
    \wedge 1
  \end{align*}
  and also
  \begin{align*}
    \left|
    \frac{1}{(n-1) \int_{T_b(x)} f(s) \diffi s + 1}
    - \frac{1}{n \int_{T_b(x)} f(s) \diffi s}
    \right|
    &\lesssim
    \frac{1}{n^2 \left( \int_{T_b(x)} f(s) \diffi s\right)^2}
    \wedge 1
    \lesssim
    \frac{1}{n^2 |T_b(x)|^2}
    \wedge 1.
  \end{align*}
  So by Lemma~\ref{lem:largest_cell}
  and Lemma~\ref{lem:moment_cell},
  since $\mu \in \cH^{\betamu}$ and $f$ is bounded below,
  using the H{\"o}lder inequality,
  \begin{align*}
    &\left|
    \E \left[ \hat \mu(x) \right]
    - \mu(x)
    - \E \left[
      \frac{\int_{T_b(x)} \left( \mu(s) - \mu(x) \right) f(s) \diffi s}
      {\int_{T_b(x)} f(s) \diffi s}
    \right]
    \right| \\
    &\quad\lesssim
    \E \left[
      \frac{n \int_{T_b(x)}
        \left| \mu(s) - \mu(x) \right|
      f(s) \diffi s}
      {n^2 |T_b(x)|^2 \vee 1}
    \right]
    \lesssim
    \E \left[
      \frac{\max_{1 \leq l \leq d} |T_b(x)_l|^{1 \wedge \betamu} }
      {n |T_b(x)| \vee 1}
    \right] \\
    &\quad\lesssim
    \E \left[
      \max_{1 \leq l \leq d} |T_b(x)_l|^{3(1 \wedge \betamu)}
    \right]^{1/3}
    \E \left[
      \frac{1} {n^{3/2} |T_b(x)|^{3/2} \vee 1}
    \right]^{2/3}
    \lesssim
    \frac{1}{\lambda^{1 \wedge \betamu}}
    \frac{\lambda^d}{n}.
  \end{align*}
  Next set
  $A = \frac{1}{f(x) |T_b(x)|} \int_{T_b(x)} (f(s) - f(x)) \diffi s
  \geq \inf_{s \in [0,1]^d} \frac{f(s)}{f(x)} - 1 > -1$.
  Use the Maclaurin series of $\frac{1}{1+x}$
  up to order $\flbeta - 1$ to see
  $\frac{1}{1 + A} = \sum_{k=0}^{\flbeta - 1} (-1)^k A^k
  + O \left( A^{\flbeta} \right)$.
  Hence
  \begin{align*}
    \E \left[
      \frac{\int_{T_b(x)} \left( \mu(s) - \mu(x) \right) f(s) \diffi s}
      {\int_{T_b(x)} f(s) \diffi s}
    \right]
    &=
    \E \left[
      \frac{\int_{T_b(x)} \left( \mu(s) - \mu(x) \right) f(s) \diffi s}
      {f(x) |{T_b(x)}|}
      \frac{1}{1 + A}
    \right] \\
    &=
    \E \left[
      \frac{\int_{T_b(x)} \left( \mu(s) - \mu(x) \right) f(s) \diffi s}
      {f(x) |{T_b(x)}|}
      \left(
        \sum_{k=0}^{\flbeta - 1}
        (-1)^k
        A^k
        + O \left( |A|^{\flbeta} \right)
      \right)
    \right].
  \end{align*}
  Note that since $\mu \in \cH^{\betamu}$
  and $f \in \cH^{\betaf}$,
  by integrating the tail probability given in Lemma~\ref{lem:largest_cell},
  the Maclaurin remainder term is bounded by
  \begin{align*}
    &\E \left[
      \frac{\int_{T_b(x)} \left| \mu(s) - \mu(x) \right| f(s) \diffi s}
      {f(x) |{T_b(x)}|}
      |A|^{\flbeta}
    \right] \\
    &\qquad=
    \E \left[
      \frac{\int_{T_b(x)} \left| \mu(s) - \mu(x) \right| f(s) \diffi s}
      {f(x) |{T_b(x)}|}
      \left(
        \frac{1}{f(x) |{T_b(x)}|} \int_{T_b(x)} (f(s) - f(x)) \diffi s
      \right)^{\flbeta}
    \right] \\
    &\qquad\lesssim
    \E \left[
      \max_{1 \leq l \leq d}
      |T_b(x)_l|^{1 \wedge \betamu}
      \left(
        \max_{1 \leq l \leq d}
        |T_b(x)_l|^{1 \wedge \betaf}
      \right)^{\flbeta}
    \right] \\
    &\qquad\lesssim
    \E \left[
      \max_{1 \leq l \leq d}
      |T_b(x)_l|^{1 \wedge \betamu + \flbeta (1 \wedge \betaf)}
    \right]
    \lesssim
    \E \left[
      \max_{1 \leq l \leq d}
      |T_b(x)_l|^{\beta}
    \right]
    \lesssim
    \frac{d}{\lambda^\beta}
    \int_0^\infty
    e^{-t} t^\beta \diffi t
    \lesssim
    \frac{1}{\lambda^\beta},
  \end{align*}
  where we used that
  $1 \wedge \betamu + \flbeta (1 \wedge \betaf) \geq \beta$.
  Hence to summarize the progress so far, we have
  \begin{align*}
    &\left|
    \E \left[
      \hat \mu(x)
    \right]
    - \mu(x)
    - \sum_{k=0}^{\flbeta-1}
    (-1)^k \,
    \E \left[
      \frac{\int_{T_b(x)} \left( \mu(s) - \mu(x) \right) f(s) \diffi s}
      {f(x)^{k+1} |T_b(x)|^{k+1}}
      \left(
        \int_{T_b(x)} (f(s) - f(x)) \diffi s
      \right)^k
    \right]
    \right| \\
    &\qquad\lesssim
    \frac{1}{\lambda^{1 \wedge \betamu}}
    \frac{\lambda^d}{n}
    + \frac{1}{\lambda^\beta}.
  \end{align*}
  We continue to evaluate this expectation.
  First, by Taylor's theorem and with
  $\nu$ a multi-index,
  since $f \in \cH^{\betaf}$,
  \begin{align*}
    \left(
      \int_{T_b(x)} (f(s) - f(x)) \diffi s
    \right)^k
    &=
    \left(
      \sum_{|\nu| = 1}^{\flbeta_f}
      \frac{\partial^\nu f(x)}{\nu !}
      \int_{T_b(x)}
      (s - x)^\nu
      \diffi s
    \right)^k
    + O \left(
      |T_b(x)|^k \max_{1 \leq l \leq d} |T_b(x)_l|^{\betaf}
    \right).
  \end{align*}
  Next, by the multinomial theorem
  with a multi-index $u$ indexed by $\nu$ with $|\nu| \geq 1$,
  \begin{align*}
    \left(
      \sum_{|\nu| = 1}^{\flbeta_f}
      \frac{\partial^\nu f(x)}{\nu !}
      \int_{T_b(x)}
      (s - x)^\nu
      \diffi s
    \right)^k
    &=
    \sum_{|u| = k}
    \binom{k}{u}
    \left(
      \frac{\partial^\nu f(x)}{\nu !}
      \int_{T_b(x)} (s-x)^\nu \diffi s
    \right)^u
  \end{align*}
  where $\binom{k}{u}$ is a multinomial coefficient.
  By Taylor's theorem with
  $\mu \in \cH^{\betamu}$ and
  $f \in \cH^{\betaf}$,
  and using
  $\betamu \wedge (1 \wedge \betamu + \betaf) \geq \beta$,
  \begin{align*}
    &\int_{T_b(x)} \left( \mu(s) - \mu(x) \right) f(s) \diffi s \\
    &\quad=
    \sum_{|\nu'|=1}^{\flbeta_\mu}
    \sum_{|\nu''|=0}^{\flbeta_f}
    \frac{\partial^{\nu'} \mu(x)}{\nu' !}
    \frac{\partial^{\nu''} f(x)}{\nu'' !}
    \int_{T_b(x)} (s-x)^{\nu' + \nu''} \diffi s
    + O \left( |T_b(x)| \max_{1 \leq l \leq d} |T_b(x)_l|^\beta \right).
  \end{align*}
  Now by integrating the tail probabilities in Lemma~\ref{lem:largest_cell},
  $ \E \left[ \max_{1 \leq l \leq d} |T_b(x)_l|^\beta \right]
  \lesssim \frac{1}{\lambda^\beta}$.
  Therefore by Lemma~\ref{lem:moment_cell},
  writing $T_b(x)^\nu$ for $\int_{T_b(x)} (s-x)^\nu \diffi s$,
  \begin{align*}
    &\sum_{k=0}^{\flbeta-1}
    (-1)^k \,
    \E \left[
      \frac{\int_{T_b(x)} \left( \mu(s) - \mu(x) \right) f(s) \diffi s}
      {f(x)^{k+1} |T_b(x)|^{k+1}}
      \left(
        \int_{T_b(x)} (f(s) - f(x)) \diffi s
      \right)^k
    \right] \\
    &\,=
    \sum_{k=0}^{\flbeta-1}
    (-1)^k \,
    \E \left[
      \frac{
        \sum_{|\nu'|=1}^{\flbeta_\mu}
        \sum_{|\nu''|=0}^{\flbeta_f}
        \frac{\partial^{\nu'} \mu(x)}{\nu' !}
        \frac{\partial^{\nu''} f(x)}{\nu'' !}
        T_b(x)^{\nu' + \nu''}
      }{f(x)^{k+1} |T_b(x)|^{k+1}}
      \sum_{|u| = k}
      \binom{k}{u}
      \left(
        \frac{\partial^\nu f(x)}{\nu !}
        T_b(x)^\nu
      \right)^u
    \right]
    + O \left(
      \frac{1}{\lambda^\beta}
    \right) \\
    &\,=
    \sum_{|\nu'|=1}^{\flbeta_\mu}
    \sum_{|\nu''|=0}^{\flbeta_f}
    \sum_{|u|=0}^{\flbeta-1}
    \frac{\partial^{\nu'} \mu(x)}{\nu' !}
    \frac{\partial^{\nu''} f(x)}{\nu'' !}
    \left( \frac{\partial^\nu f(x)}{\nu !} \right)^u
    \hspace*{-1mm}
    \binom{|u|}{u}
    \frac{(-1)^{|u|}}{f(x)^{|u|+1}}
    \E \hspace*{-1mm} \left[
      \frac{ T_b(x)^{\nu' + \nu''} (T_b(x)^\nu)^u}{|T_b(x)|^{|u|+1}}
    \right]
    \hspace*{-1mm}
    + O \left(
      \frac{1}{\lambda^\beta}
    \right) \hspace*{-1mm} .
  \end{align*}
  Now we show this is a polynomial in $\lambda$.
  For $1 \leq j \leq d$, define the independent variables
  $E_{1j*} \sim \Exp(1) \wedge (\lambda x_j)$
  and $E_{2j*} \sim \Exp(1) \wedge (\lambda (1-x_j))$
  so
  $T_b(x) = \prod_{j=1}^{d} [x_j - E_{1j*} / \lambda, x_j + E_{2j*} / \lambda]$.
  Then
  \begin{align*}
    T_b(x)^\nu
    &=
    \int_{T_b(x)} (s-x)^\nu \diffi s
    = \prod_{j=1}^d
    \int_{x_j - E_{1j*}/\lambda}^{x_j+E_{2j*}/\lambda}
    (s - x_j)^{\nu_j} \diffi s
    = \prod_{j=1}^d
    \int_{-E_{1j*}}^{E_{2j*}} (s / \lambda)^{\nu_j} 1/\lambda \diffi s \\
    &=
    \lambda^{-d - |\nu|}
    \prod_{j=1}^d
    \int_{-E_{1j*}}^{E_{2j*}} s^{\nu_j} \diffi s
    = \lambda^{-d - |\nu|}
    \prod_{j=1}^d
    \frac{E_{2j*}^{\nu_j + 1} + (-1)^{\nu_j} E_{1j*}^{\nu_j + 1}}
    {\nu_j + 1}.
  \end{align*}
  So by independence over $j$,
  \begin{align}
    \nonumber
    &\E \left[
      \frac{ T_b(x)^{\nu' + \nu''} (T_b(x)^\nu)^u}{|T_b(x)|^{|u|+1}}
    \right] \\
    \label{eq:bias_calc}
    &\quad=
    \lambda^{- |\nu'| - |\nu''| - |\nu| \cdot u}
    \prod_{j=1}^d
    \E \left[
      \frac{E_{2j*}^{\nu'_j + \nu''_j + 1}
      + (-1)^{\nu'_j + \nu''_j} E_{1j*}^{\nu'_j + \nu''_j + 1}}
      {(\nu'_j + \nu''_j + 1) (E_{2j*} + E_{1j*})}
      \frac{\left(E_{2j*}^{\nu_j + 1}
      + (-1)^{\nu_j} E_{1j*}^{\nu_j + 1}\right)^u}
      {(\nu_j + 1)^u (E_{2j*} + E_{1j*})^{|u|}}
    \right].
  \end{align}
  The final step is to replace $E_{1j*}$
  by $E_{1j} \sim \Exp(1)$ and similarly for $E_{2j*}$.
  Note that for a positive constant $C$,
  \begin{align*}
    \P \left(
      \bigcup_{j=1}^{d}
      \left(
        \left\{
          E_{1j*} \neq E_{1j}
        \right\}
        \cup
        \left\{
          E_{2j*} \neq E_{2j}
        \right\}
      \right)
    \right)
    &\leq
    2d\,
    \P \left(
      \Exp(1) \geq \lambda \min_{1 \leq j \leq d}
      (x_j \wedge (1-x_j))
    \right)
    \leq
    2d e^{-C \lambda}.
  \end{align*}
  Further, the quantity inside the expectation in \eqref{eq:bias_calc}
  is bounded almost surely by one and so
  the error incurred by replacing
  $E_{1j*}$ and $E_{2j*}$ by $E_{1j}$ and $E_{2j}$
  in \eqref{eq:bias_calc}
  is at most $2 d e^{-C \lambda} \lesssim \lambda^{-\beta}$.
  Thus the limiting bias is
  \begin{align}
    \label{eq:bias_proof}
    &\E \left[ \hat \mu(x) \right]
    - \mu(x)
    =
    \sum_{|\nu'|=1}^{\flbeta_\mu}
    \sum_{|\nu''|=0}^{\flbeta_f}
    \sum_{|u|=0}^{\flbeta - 1}
    \frac{\partial^{\nu'} \mu(x)}{\nu' !}
    \frac{\partial^{\nu''} f(x)}{\nu'' !}
    \left( \frac{\partial^\nu f(x)}{\nu !} \right)^u
    \binom{|u|}{u}
    \frac{(-1)^{|u|}}{f(x)^{|u|+1}}
    \, \lambda^{- |\nu'| - |\nu''| - |\nu| \cdot u} \\
    \nonumber
    &\ \times
    \prod_{j=1}^d
    \E \left[
      \frac{E_{2j}^{\nu'_j + \nu''_j + 1}
      + (-1)^{\nu'_j + \nu''_j} E_{1j}^{\nu'_j + \nu''_j + 1}}
      {(\nu'_j + \nu''_j + 1) (E_{2j} + E_{1j})}
      \frac{\left(E_{2j}^{\nu_j + 1}
      + (-1)^{\nu_j} E_{1j}^{\nu_j + 1}\right)^u}
      {(\nu_j + 1)^u (E_{2j} + E_{1j})^{|u|}}
    \right]
    + O \left( \frac{1}{\lambda^{1 \wedge \betamu}}
      \frac{\lambda^d}{n}
    + \frac{1}{\lambda^\beta} \right),
  \end{align}
  recalling that $u$ is a multi-index which is indexed by the multi-index $\nu$.
  This is a polynomial in $\lambda$ of degree at most $\flbeta$,
  since higher-order terms can be absorbed into $O(1 / \lambda^\beta)$,
  which has finite coefficients depending only on
  the derivatives up to order $\flbeta \leq \flbeta_\mu$
  and $\flbeta - 1 \leq \flbeta_f$
  of $\mu$ and $f$ respectively at $x$.
  Now we show that the odd-degree terms in this polynomial are all zero.
  Note that a term is of odd degree if and only if
  $|\nu'| + |\nu''| + |\nu| \cdot u$ is odd.
  This implies that there exists $1 \leq j \leq d$ such that
  exactly one of either
  $\nu'_j + \nu''_j$ is odd or
  $\sum_{|\nu|=1}^{\flbeta-1} \nu_j u_\nu$ is odd.

  If $\nu'_j + \nu''_j$ is odd, then
  $\sum_{|\nu|=1}^{\flbeta-1} \nu_j u_\nu$ is even, so
  $|\{\nu : \nu_j u_\nu \text{ is odd}\}|$ is even.
  Consider the effect of swapping $E_{1j}$ and $E_{2j}$,
  an operation which by independence preserves their joint law,
  in each of
  \begin{align}
    \label{eq:bias_odd_1}
    \frac{E_{2j}^{\nu'_j + \nu''_j + 1}
    + (-1)^{\nu'_j + \nu''_j} E_{1j}^{\nu'_j + \nu''_j + 1}}
    {E_{2j} + E_{1j}}
  \end{align}
  and
  \begin{align}
    \label{eq:bias_odd_2}
    \frac{\left(E_{2j}^{\nu_j + 1}
    + (-1)^{\nu_j} E_{1j}^{\nu_j + 1}\right)^u}
    {(E_{2j} + E_{1j})^{|u|}}
    =
    \prod_{\substack{|\nu| = 1 \\
    \nu_j u_\nu \text{ even}}}^{\flbeta - 1}
    \frac{\left(E_{2j}^{\nu_j + 1}
    + (-1)^{\nu_j} E_{1j}^{\nu_j + 1}\right)^{u_\nu}}
    {(E_{2j} + E_{1j})^{u_\nu}}
    \prod_{\substack{|\nu| = 1 \\
    \nu_j u_\nu \text{ odd}}}^{\flbeta - 1}
    \frac{\left(E_{2j}^{\nu_j + 1}
    + (-1)^{\nu_j} E_{1j}^{\nu_j + 1}\right)^{u_\nu}}
    {(E_{2j} + E_{1j})^{u_\nu}}.
  \end{align}
  Clearly $\nu'_j + \nu''_j$ being odd inverts the
  sign of \eqref{eq:bias_odd_1}.
  For \eqref{eq:bias_odd_2},
  each term in the first product has either
  $\nu_j$ even or $u_\nu$ even, so its sign is preserved.
  Every term in the second product of \eqref{eq:bias_odd_2}
  has its sign inverted due to both $\nu_j$ and $u_\nu$ being odd,
  but there are an even number of terms,
  preserving the overall sign.
  Therefore the expected product
  of \eqref{eq:bias_odd_1} and \eqref{eq:bias_odd_2} is zero by symmetry.

  If however $\nu'_j + \nu''_j$ is even, then
  $\sum_{|\nu|=1}^{\flbeta - 1} \nu_j u_\nu$ is odd so
  $|\{\nu : \nu_j u_\nu \text{ is odd}\}|$ is odd.
  Clearly the sign of \eqref{eq:bias_odd_1} is preserved.
  Again the sign of the first product in \eqref{eq:bias_odd_2}
  is preserved, and the sign of every term in \eqref{eq:bias_odd_2}
  is inverted. However there are now an odd number of terms in the
  second product, so its overall sign is inverted.
  Therefore the expected product
  of \eqref{eq:bias_odd_1} and \eqref{eq:bias_odd_2} is again zero.

  \proofparagraph{Calculating the second-order bias}

  Next we calculate some special cases, beginning with
  the form of the leading second-order bias,
  where the exponent in $\lambda$ is
  $|\nu'| + |\nu''| + u \cdot |\nu| = 2$,
  proceeding by cases on the values of $|\nu'|$, $|\nu''|$, and $|u|$.
  Firstly, if $|\nu'| = 2$ then $|\nu''| = |u| = 0$.
  Note that if any $\nu'_j = 1$ then the expectation in \eqref{eq:bias_proof}
  is zero.
  Hence we can assume $\nu'_j \in \{0, 2\}$, yielding
  \begin{align*}
    \frac{1}{2 \lambda^2}
    \sum_{j=1}^d
    \frac{\partial^2 \mu(x)}{\partial x_j^2}
    \frac{1}{3}
    \E \left[
      \frac{E_{2j}^{3} + E_{1j}^{3}} {E_{2j} + E_{1j}}
    \right]
    &=
    \frac{1}{2 \lambda^2}
    \sum_{j=1}^d
    \frac{\partial^2 \mu(x)}{\partial x_j^2}
    \frac{1}{3}
    \E \left[
      E_{1j}^{2}
      + E_{2j}^{2}
      - E_{1j} E_{2j}
    \right]
    =
    \frac{1}{2 \lambda^2}
    \sum_{j=1}^d
    \frac{\partial^2 \mu(x)}{\partial x_j^2},
  \end{align*}
  where we used that $E_{1j}$ and $E_{2j}$ are independent $\Exp(1)$.
  Next we consider $|\nu'| = 1$ and $|\nu''| = 1$, so $|u| = 0$.
  Note that if $\nu'_j = \nu''_{j'} = 1$ with $j \neq j'$ then the
  expectation in \eqref{eq:bias_proof} is zero.
  So we need only consider $\nu'_j = \nu''_j = 1$, giving
  \begin{align*}
    \frac{1}{\lambda^2}
    \frac{1}{f(x)}
    \sum_{j=1}^{d}
    \frac{\partial \mu(x)}{\partial x_j}
    \frac{\partial f(x)}{\partial x_j}
    \frac{1}{3}
    \E \left[
      \frac{E_{2j}^{3} + E_{1j}^{3}}
      {E_{2j} + E_{1j}}
    \right]
    &=
    \frac{1}{\lambda^2}
    \frac{1}{f(x)}
    \sum_{j=1}^{d}
    \frac{\partial \mu(x)}{\partial x_j}
    \frac{\partial f(x)}{\partial x_j}.
  \end{align*}
  Finally we have the case where $|\nu'| = 1$, $|\nu''| = 0$
  and $|u|=1$. Then $u_\nu = 1$ for some $|\nu| = 1$ and zero otherwise.
  Note that if $\nu'_j = \nu_{j'} = 1$ with $j \neq j'$ then the
  expectation is zero. So we need only consider $\nu'_j = \nu_j = 1$, giving
  \begin{align*}
    &- \frac{1}{\lambda^2}
    \frac{1}{f(x)}
    \sum_{j=1}^{d}
    \frac{\partial \mu(x)}{\partial x_j}
    \frac{\partial f(x)}{\partial x_j}
    \frac{1}{4}
    \E \left[
      \frac{(E_{2j}^2 - E_{1j}^2)^2}
      {(E_{2j} + E_{1j})^2}
    \right] \\
    &\quad=
    - \frac{1}{4 \lambda^2}
    \frac{1}{f(x)}
    \sum_{j=1}^{d}
    \frac{\partial \mu(x)}{\partial x_j}
    \frac{\partial f(x)}{\partial x_j}
    \E \left[
      E_{1j}^2
      + E_{2j}^2
      - 2 E_{1j} E_{2j}
    \right]
    =
    - \frac{1}{2 \lambda^2}
    \frac{1}{f(x)}
    \sum_{j=1}^{d}
    \frac{\partial \mu(x)}{\partial x_j}
    \frac{\partial f(x)}{\partial x_j}.
  \end{align*}
  Hence the second-order bias term is
  \begin{align*}
    \frac{1}{2 \lambda^2}
    \sum_{j=1}^d
    \frac{\partial^2 \mu(x)}{\partial x_j^2}
    + \frac{1}{2 \lambda^2}
    \frac{1}{f(x)}
    \sum_{j=1}^{d}
    \frac{\partial \mu(x)}{\partial x_j}
    \frac{\partial f(x)}{\partial x_j}.
  \end{align*}

  \proofparagraph{Calculating the bias if the data is uniformly distributed}

  If $X_i \sim \Unif\big([0,1]^d\big)$ then $f(x) = 1$ and
  the bias expansion from \eqref{eq:bias_proof} becomes
  \begin{align*}
    \sum_{|\nu'|=1}^{\flbeta}
    \lambda^{- |\nu'|}
    \frac{\partial^{\nu'} \mu(x)}{\nu' !}
    \prod_{j=1}^d
    \E \left[
      \frac{E_{2j}^{\nu'_j + 1}
      + (-1)^{\nu'_j} E_{1j}^{\nu'_j + 1}}
      {(\nu'_j + 1) (E_{2j} + E_{1j})}
    \right].
  \end{align*}
  Note that this is zero if any $\nu_j'$ is odd.
  Therefore we can group these terms based on the exponent of $\lambda$ to see
  \begin{align*}
    \frac{B_r(x)}{\lambda^{2r}}
    &=
    \frac{1}{\lambda^{2r}}
    \sum_{|\nu|=r}
    \frac{\partial^{2 \nu} \mu(x)}{(2 \nu) !}
    \prod_{j=1}^d
    \frac{1}{2\nu_j + 1}
    \E \left[
      \frac{E_{2j}^{2\nu_j + 1} + E_{1j}^{2\nu_j + 1}}
      {E_{2j} + E_{1j}}
    \right].
  \end{align*}
  Since $\int_0^\infty \frac{e^{-t}}{a+t} \diffi t = e^a \Gamma(0,a)$
  and $\int_0^\infty s^a \Gamma(0, a) \diffi s = \frac{a!}{a+1}$,
  with $\Gamma(0, a) = \int_a^\infty \frac{e^{-t}}{t} \diffi t$
  the upper incomplete gamma function,
  the expectation is easily calculated as
  \begin{align*}
    \E \left[
      \frac{E_{2j}^{2\nu_j + 1} + E_{1j}^{2\nu_j + 1}}
      {E_{2j} + E_{1j}}
    \right]
    &=
    2
    \int_{0}^{\infty}
    s^{2\nu_j + 1}
    e^{-s}
    \int_{0}^{\infty}
    \frac{e^{-t}}
    {s + t}
    \diffi t
    \diffi s
    =
    2
    \int_{0}^{\infty}
    s^{2\nu_j + 1}
    \Gamma(0, s)
    \diffi s
    =
    \frac{(2 \nu_j + 1)!}{\nu_j + 1},
  \end{align*}
  so
  \begin{align*}
    \frac{B_r(x)}{\lambda^{2r}}
    &=
    \frac{1}{\lambda^{2r}}
    \sum_{|\nu|=r}
    \frac{\partial^{2 \nu} \mu(x)}{(2 \nu) !}
    \prod_{j=1}^d
    \frac{1}{2\nu_j + 1}
    \frac{(2 \nu_j + 1)!}{\nu_j + 1}
    =
    \frac{1}{\lambda^{2r}}
    \sum_{|\nu|=r}
    \partial^{2 \nu} \mu(x)
    \prod_{j=1}^d
    \frac{1}{\nu_j + 1}.
  \end{align*}
\end{proof}

\begin{proof}[Lemma~\ref{lem:variance}]
  By Lemma~\ref{lem:variance_debiased} with $J=0$,
  $a_0 = 1$, and $\omega_0 = 1$.
\end{proof}

\begin{proof}[Theorem~\ref{thm:rate}]
  By Theorem~\ref{thm:minimax} with $J=0$, $a_0 = 1$, and $\omega_0 = 1$.
\end{proof}

\begin{proof}[Theorem~\ref{thm:clt}]
  By Theorem~\ref{thm:clt_debiased} with $J=0$,
  $a_0 = 1$, and $\omega_0 = 1$.
\end{proof}

\begin{proof}[Lemma~\ref{lem:variance_estimation}]
  By Lemma~\ref{lem:variance_estimation_debiased}
  with $J=0$, $a_0 = 1$, and $\omega_0 = 1$.
\end{proof}

\begin{proof}[Theorem~\ref{thm:confidence}]
  By Theorem~\ref{thm:confidence_debiased}
  with $J=0$, $a_0 = 1$, and $\omega_0 = 1$,
  replacing $\beta$ by $2 \wedge \beta$.
\end{proof}

\subsection{Proofs for Section~\ref{sec:debiased}}
\label{sec:proofs_debiased}

We give rigorous proofs of the bias and variance
characterizations, minimax optimality, central limit theorem,
variance estimation, and confidence interval validity
results for the debiased Mondrian random forest estimator.

The bias characterization of Lemma~\ref{lem:bias_debiased} with debiasing is
a purely algebraic consequence of the original bias characterization and the
construction of the debiased Mondrian random forest estimator.

\begin{proof}[Lemma~\ref{lem:bias_debiased}]

  By the definition of the debiased estimator and Lemma~\ref{lem:bias},
  as $J$ and $a_r$ are fixed,
  \begin{align*}
    &\E \left[
      \Bigg(
        \E \left[ \hat \mu_\rd(x) \mid \bX, \bT \right]
        -
        \sum_{l=0}^{J}
        \omega_l
        \Bigg(
          \mu(x)
          + \sum_{r=1}^{\lfloor \flbeta / 2 \rfloor}
          \frac{B_r(x)}{a_l^{2r} \lambda^{2r}}
        \Bigg)
      \Bigg)^2
    \right] \\
    &\quad=
    \E \left[
      \Bigg(
        \sum_{l=0}^{J}
        \omega_l
        \Bigg(
          \E \left[ \hat \mu_l(x) \mid \bX, \bT \right]
          -
          \mu(x)
          - \sum_{r=1}^{\lfloor \flbeta / 2 \rfloor}
          \frac{B_r(x)}{a_l^{2r} \lambda^{2r}}
        \Bigg)
      \Bigg)^2
    \right] \\
    &\quad\lesssim
    \sum_{l=0}^{J}
    \E \left[
      \Bigg(
        \E \left[ \hat \mu_l(x) \mid \bX, \bT \right]
        -
        \mu(x)
        - \sum_{r=1}^{\lfloor \flbeta / 2 \rfloor}
        \frac{B_r(x)}{a_l^{2r} \lambda^{2r}}
      \Bigg)^2
    \right]
    \lesssim
    \frac{1}{\lambda^{2 \beta}}
    + \frac{1}{\lambda^{2(1 \wedge \beta)} B}
    + \frac{1}{\lambda^{2(1 \wedge \beta)}}
    \frac{\lambda^d}{n}.
  \end{align*}
  It remains to evaluate the resulting bias.
  Recalling that $A_{r s} = a_{r-1}^{2 - 2s}$
  and $A \omega = e_0$, we have
  \begin{align*}
    \sum_{l=0}^J
    \omega_l
    \left(
      \mu(x)
      + \sum_{r=1}^{\lfloor \flbeta / 2 \rfloor}
      \frac{B_r(x)}{a_l^{2r} \lambda^{2r}}
    \right)
    &=
    \mu(x)
    \sum_{l=0}^J
    \omega_l
    +
    \sum_{r=1}^{\lfloor \flbeta / 2 \rfloor}
    \frac{B_r(x)}{\lambda^{2r}}
    \sum_{l=0}^J
    \frac{\omega_l}{a_l^{2r}} \\
    &=
    \mu(x)
    (A \omega)_1
    + \sum_{r=1}^{\lfloor \flbeta / 2 \rfloor \wedge J}
    \frac{B_r(x)}{\lambda^{2r}}
    (A \omega)_{r+1}
    + \sum_{r = (\lfloor \flbeta / 2 \rfloor \wedge J) + 1}
    ^{\lfloor \flbeta / 2 \rfloor}
    \frac{B_r(x)}{\lambda^{2r}}
    \sum_{l=0}^J
    \frac{\omega_l}{a_l^{2r}} \\
    &=
    \mu(x)
    + \I\{\lfloor \flbeta / 2 \rfloor \geq J + 1\}
    \frac{B_{J+1}(x)}{\lambda^{2J + 2}}
    \sum_{l=0}^J
    \frac{\omega_l}{a_l^{2J + 2}}
    + O \left( \frac{1}{\lambda^{2J + 4}} \right) \\
    &=
    \mu(x)
    + \I\{2J + 2 < \beta\}
    \frac{\bar\omega B_{J+1}(x)}{\lambda^{2J + 2}}
    + O \left( \frac{1}{\lambda^{2J + 4}} \right).
  \end{align*}
\end{proof}

\begin{proof}[Lemma~\ref{lem:variance_debiased}]
  Firstly, note that with $\sigma^2_i = \sigma^2(X_i)$ for brevity,
  \begin{align*}
    \tilde \Sigma_\rd(x)
    &=
    \frac{n}{\lambda^d}
    \Var \left[
      \sum_{r=0}^{J}
      \omega_r
      \frac{1}{B}
      \sum_{b=1}^B
      \sum_{i=1}^n
      \frac{ Y_i \I\big\{ X_i \in T_{b r}(x) \big\}}
      {N_{b r}(x)}
      \Bigm| \bX, \bT
    \right] \\
    &=
    \frac{n}{\lambda^d}
    \sum_{i=1}^n
    \sum_{r=0}^{J}
    \sum_{r'=0}^{J}
    \omega_r
    \omega_{r'}
    \frac{1}{B^2}
    \sum_{b=1}^B
    \sum_{b'=1}^B
    \frac{\I_{i b r}(x) \I_{i b' r'}(x) \sigma_i^2}
    {N_{b r}(x) N_{b' r'}(x)}.
  \end{align*}

  \proofparagraph{bounding the variance of $\tilde\Sigma_\rd(x)$}

  \begin{align}
    \nonumber
    \Var \left[
      \tilde \Sigma_\rd(x)
    \right]
    &=
    \Var \left[
      \frac{n}{\lambda^d}
      \sum_{i=1}^{n}
      \sum_{r=0}^{J}
      \sum_{r'=0}^{J}
      \omega_{r}
      \omega_{r'}
      \frac{1}{B^2}
      \sum_{b=1}^{B}
      \sum_{b'=1}^{B}
      \frac{\I_{i b r}(x) \I_{i b' r'}(x) \sigma_i^2}
      {N_{b r}(x) N_{b' r'}(x)}
    \right] \\
    &\nonumber
    \lesssim
    \frac{n^2}{\lambda^{2d}}
    \frac{1}{B^4}
    \Var \left[
      \sum_{i=1}^{n}
      \sum_{b=1}^{B}
      \sum_{b'=1}^{B}
      \frac{\I_{i b r}(x) \I_{i b' r'}(x) \sigma_i^2}
      {N_{b r}(x) N_{b' r'}(x)}
    \right] \\
    &\nonumber
    \lesssim
    \frac{n^2}{\lambda^{2d}}
    \frac{1}{B^4}
    \E \left[
      \Var \left[
        \sum_{i=1}^{n}
        \sum_{b=1}^{B}
        \sum_{b'=1}^{B}
        \frac{\I_{i b r}(x) \I_{i b' r'}(x) \sigma_i^2}
        {N_{b r}(x) N_{b' r'}(x)}
        \Bigm| \bX
      \right]
    \right] \\
    &\label{eq:clt_total_variance}
    \quad+
    \frac{n^2}{\lambda^{2d}}
    \frac{1}{B^4}
    \Var \left[
      \E \left[
        \sum_{i=1}^{n}
        \sum_{b=1}^{B}
        \sum_{b'=1}^{B}
        \frac{\I_{i b r}(x) \I_{i b' r'}(x) \sigma_i^2}
        {N_{b r}(x) N_{b' r'}(x)}
        \Bigm| \bX
      \right]
    \right].
  \end{align}
  For the first term in \eqref{eq:clt_total_variance},
  \begin{align*}
    &\E \left[
      \Var \left[
        \sum_{i=1}^{n}
        \sum_{b=1}^{B}
        \sum_{b'=1}^{B}
        \frac{\I_{i b r}(x) \I_{i b' r'}(x) \sigma_i^2}
        {N_{b r}(x) N_{b' r'}(x)}
        \Bigm| \bX
      \right]
    \right]
    = \sum_{i=1}^{n}
    \sum_{j=1}^{n}
    \sum_{b=1}^{B}
    \sum_{b'=1}^{B}
    \sum_{\tilde b=1}^{B}
    \sum_{\tilde b'=1}^{B} \\
    &\ \E \left[
      \sigma^2_i
      \sigma^2_j
      \left(
        \frac{\I_{i b r}(x)} {N_{b r}(x)}
        \frac{\I_{i b' r'}(x)} {N_{b' r'}(x)}
        \hspace*{-1mm}
        - \E \left[
          \frac{\I_{i b r}(x)} {N_{b r}(x)}
          \frac{\I_{i b' r'}(x)} {N_{b' r'}(x)}
          \hspace*{-1mm}
          \Bigm|
          \hspace*{-1mm}
          \bX
        \right]
      \right)
      \hspace*{-1mm}
      \bigg(
        \frac{\I_{j \tilde b r}(x)} {N_{\tilde b r}(x)}
        \frac{\I_{j \tilde b' r'}(x)} {N_{\tilde b' r'}(x)}
        \hspace*{-1mm}
        - \E \bigg[
          \frac{\I_{j \tilde b r}(x)} {N_{\tilde b r}(x)}
          \frac{\I_{j \tilde b' r'}(x)} {N_{\tilde b' r'}(x)}
          \hspace*{-1mm}
          \Bigm|
          \hspace*{-1mm}
          \bX
        \bigg]
      \bigg)
    \right].
  \end{align*}
  Since $T_{b r}$ is independent of $T_{b' r'}$ given
  $\bX$, the summands are zero
  whenever $|\{b, b', \tilde b, \tilde b'\}| = 4$.
  Further, by the Cauchy--Schwarz inequality
  and Lemma~\ref{lem:simple_moment_denominator},
  \begin{align*}
    &\frac{n^2}{\lambda^{2d}}
    \frac{1}{B^4}
    \E \left[
      \Var \left[
        \sum_{i=1}^{n}
        \sum_{b=1}^{B}
        \sum_{b'=1}^{B}
        \frac{\I_{i b r}(x)} {N_{b r}(x)}
        \frac{\I_{i b' r'}(x)} {N_{b' r'}(x)}
        \sigma^2(X_i)
        \Bigm| \bX
      \right]
    \right] \\
    &\quad\lesssim
    \frac{n^2}{\lambda^{2d}}
    \frac{1}{B^3}
    \sum_{b=1}^{B}
    \sum_{b'=1}^{B}
    \E \left[
      \left(
        \sum_{i=1}^{n}
        \frac{\I_{i b r}(x)} {N_{b r}(x)}
        \frac{\I_{i b' r'}(x)} {N_{b' r'}(x)}
      \right)^2
    \right]
    \lesssim
    \frac{n^2}{\lambda^{2d}}
    \frac{1}{B^3}
    \sum_{b=1}^{B}
    \sum_{b'=1}^{B}
    \E \left[
      \frac{\I_{b r}(x)} {N_{b r}(x)}
      \frac{\I_{b' r'}(x)} {N_{b' r'}(x)}
    \right] \\
    &\quad\lesssim
    \frac{n^2}{\lambda^{2d}}
    \frac{1}{B^3}
    \left(
      B^2
      \frac{\lambda^{2d}}{n^2}
      + B
      \frac{\lambda^{2d} (\log n)^d}{n^2}
    \right)
    \lesssim
    \frac{1}{B}
    + \frac{(\log n)^d}{B^2}
    \lesssim
    \frac{1}{B}.
  \end{align*}
  For the second term in \eqref{eq:clt_total_variance},
  the random variable inside the variance is a nonlinear
  function of the i.i.d.\ variables $X_i$,
  so we apply the Efron--Stein inequality
  \citep{efron1981jackknife}.
  Let $\hat X_{i j} = X_i$ if $i \neq j$ and be an
  independent copy of $X_j$, denoted $\hat X_j$, if $i = j$,
  and define $\sigma^2_{i j} = \sigma^2(\hat X_{i j})$.
  Write
  $\hat \I_{i j b r}(x) = \I \big\{ \hat X_{i j} \in T_{b r}(x) \big\}$
  and
  $\hat \I_{j b r}(x) = \I \big\{ \hat X_{j} \in T_{b r}(x) \big\}$,
  and also
  $\hat N_{j b r}(x) = \sum_{i=1}^{n} \hat \I_{i j b r}(x)$,
  We use the leave-one-out notation
  $N_{-j b r}(x) = \sum_{i \neq j} \I_{i b r}(x)$
  and also write
  $N_{-j b r \cap b' r'} = \sum_{i \neq j} \I_{i b r}(x) \I_{i b' r'}(x)$.
  \begin{align}
    \nonumber
    &\frac{n^2}{\lambda^{2d}}
    \frac{1}{B^4}
    \Var \left[
      \E \left[
        \sum_{i=1}^{n}
        \sum_{b=1}^{B}
        \sum_{b'=1}^{B}
        \frac{\I_{i b r}(x)} {N_{b r}(x)}
        \frac{\I_{i b' r'}(x)} {N_{b' r'}(x)}
        \sigma^2_i
        \Bigm| \bX
      \right]
    \right]
    \lesssim
    \frac{n^2}{\lambda^{2d}}
    \Var \left[
      \E \left[
        \sum_{i=1}^{n}
        \frac{\I_{i b r}(x)} {N_{b r}(x)}
        \frac{\I_{i b' r'}(x)} {N_{b' r'}(x)}
        \sigma^2_i
        \Bigm| \bX
      \right]
    \right] \\
    \nonumber
    &\quad\lesssim
    \frac{n^2}{\lambda^{2d}}
    \sum_{j=1}^{n}
    \E \left[
      \left(
        \sum_{i=1}^n
        \left(
          \frac{\I_{i b r}(x) \I_{i b' r'}(x) \sigma_i^2}
          {N_{b r}(x) N_{b' r'}(x)}
          - \frac{\hat \I_{i j b r}(x) \hat \I_{i j b' r'}(x)
          \hat \sigma_{i j}^2}
          {\hat N_{j b r}(x) \hat N_{j b' r'}(x)}
        \right)
      \right)^2
    \right] \\
    \nonumber
    &\quad\lesssim
    \frac{n^2}{\lambda^{2d}}
    \sum_{j=1}^{n}
    \E \left[
      \left(
        \left|
        \frac{1}
        {N_{b }(x) N_{b' r'}(x)}
        - \frac{1}
        {\hat N_{j b r}(x) \hat N_{j b' r'}(x)}
        \right|
        \sum_{i \neq j}
        \I_{i b r}(x) \I_{i b' r'}(x) \sigma_i^2
      \right)^2
    \right] \\
    \nonumber
    &\qquad+
    \frac{n^2}{\lambda^{2d}}
    \sum_{j=1}^{n}
    \E \left[
      \left(
        \frac{\I_{j b r}(x) \I_{j b' r'}(x) \sigma_j^2}
        {N_{b r}(x) N_{b' r'}(x)}
        - \frac{\hat \I_{j b r}(x) \hat \I_{j b' r'}(x)
        \hat \sigma_{j j}^2}
        {\hat N_{j b r}(x) \hat N_{j b' r'}(x)}
      \right)^2
    \right] \\
    \label{eq:berry_esseen_efron_stein}
    &\quad\lesssim
    \frac{n^2}{\lambda^{2d}}
    \sum_{j=1}^{n}
    \E \left[
      N_{-j b r \cap b' r'}(x)^2
      \left|
      \frac{1}
      {N_{b r}(x) N_{b' r'}(x)}
      - \frac{1}
      {\hat N_{j b r}(x) \hat N_{j b' r'}(x)}
      \right|^2
      \hspace*{-1mm}
      + \frac{\I_{j b r}(x) \I_{j b' r'}(x)}
      {N_{b r}(x)^2 N_{b' r'}(x)^2}
    \right].
  \end{align}
  For the first term in \eqref{eq:berry_esseen_efron_stein},
  note that since $|N_{b r}(x) - \hat N_{j b r}(x)| \leq
  \I_{j b r}(x) + \hat \I_{j b r}(x)$
  and similarly $|N_{b' r'}(x) - \hat N_{j b' r'}(x)| \leq
  \I_{j b' r'}(x) + \hat \I_{j b' r'}(x)$,
  \begin{align*}
    &\left|
    \frac{1}{N_{b r}(x) N_{b' r'}(x)}
    - \frac{1} {\hat N_{j b r}(x) \hat N_{j b' r'}(x)}
    \right| \\
    &\quad\leq
    \frac{1}{N_{b r}(x)}
    \left|
    \frac{1} {N_{b' r'}(x)} - \frac{1} {\hat N_{j b' r'}(x)}
    \right|
    + \frac{1}{\hat N_{j b' r'}(x)}
    \left|
    \frac{1} {N_{b r}(x)} - \frac{1} {\hat N_{j b r}(x)}
    \right| \\
    &\quad\leq
    \frac{\I_{j b r}(x) + \hat \I_{j b r}(x)}
    {N_{-j b r}(x) N_{-j b' r'}(x)^2}
    + \frac{\I_{j b' r'}(x) + \hat \I_{j b' r'}(x)}
    {N_{-j b' r'}(x) N_{-j b r}(x)^2}.
  \end{align*}
  Therefore by Lemma~\ref{lem:simple_moment_denominator},
  \begin{align*}
    &\frac{n^2}{\lambda^{2d}}
    \frac{1}{B^4}
    \Var \left[
      \E \left[
        \sum_{i=1}^{n}
        \sum_{b=1}^{B}
        \sum_{b'=1}^{B}
        \frac{\I_{i b r}(x) \I_{i b' r'}(x) \sigma^2(X_i)}
        {N_{b r}(x) N_{b' r'}(x)}
        \Bigm| \bX
      \right]
    \right] \\
    &\quad\lesssim
    \frac{n^2}{\lambda^{2d}}
    \sum_{j=1}^{n}
    \E \left[
      \frac{\I_{j b r}(x) \I_{b r \cap b' r'}(x)}
      {N_{b r}(x)^2 N_{b' r'}(x)^2}
    \right]
    \lesssim
    \frac{n^2}{\lambda^{2d}}
    \E \left[
      \frac{\I_{b r}(x) \I_{b' r'}(x)}
      {N_{b r}(x)^{3/2} N_{b' r'}(x)^{3/2}}
    \right]
    \lesssim
    \frac{n^2}{\lambda^{2d}}
    \frac{\lambda^{3d}}{n^3}
    \lesssim
    \frac{\lambda^{d}}{n}.
  \end{align*}
  We deduce that
  \begin{align*}
    \Var \left[
      \tilde \Sigma_\rd(x)
    \right]
    &\lesssim
    \frac{1}{B}
    + \frac{\lambda^d}{n}.
  \end{align*}

  \proofparagraph{controlling the expectation of $\tilde\Sigma_\rd(x)$}

  \begin{align*}
    \E \left[ \tilde\Sigma_\rd(x) \right]
    &=
    \frac{n}{\lambda^d}
    \sum_{i=1}^{n}
    \sum_{r=0}^{J}
    \sum_{r'=0}^{J}
    \omega_r
    \omega_{r'}
    \frac{1}{B^2}
    \sum_{b=1}^B
    \sum_{b'=1}^B
    \E \left[
      \frac{\I_{i b r}(x) \I_{i b' r'}(x) \sigma^2(X_i)}
      {N_{b r}(x) N_{b' r'}(x)}
    \right].
  \end{align*}
  Firstly, by Lemma~\ref{lem:simple_moment_denominator},
  the diagonal terms in the forest are
  \begin{align*}
    \left|
    \frac{n}{\lambda^d}
    \sum_{i=1}^{n}
    \sum_{r=0}^{J}
    \sum_{r'=0}^{J}
    \omega_r
    \omega_{r'}
    \frac{1}{B^2}
    \sum_{b=1}^B
    \E \left[
      \frac{\I_{i b r}(x) \I_{i b r'}(x) \sigma^2(X_i)}
      {N_{b r}(x) N_{b r'}(x)}
    \right]
    \right|
    \lesssim
    \frac{n}{\lambda^d}
    \frac{1}{B}
    \E \left[
      \frac{\I_{b r}(x)} {N_{b r}(x)}
    \right]
    \lesssim
    \frac{1}{B},
  \end{align*}
  so it suffices to take $b \neq b'$ since
  \begin{align*}
    \E \left[ \tilde\Sigma_\rd(x) \right]
    &=
    \frac{n^2}{\lambda^d}
    \sum_{r=0}^{J}
    \sum_{r'=0}^{J}
    \omega_r
    \omega_{r'} \,
    \E \left[
      \frac{\I_{i b r}(x) \I_{i b' r'}(x) \sigma^2(X_i)}
      {N_{b r}(x) N_{b' r'}(x)}
    \right]
    + O \left( \frac{1}{B} \right).
  \end{align*}
  Next, note that
  \begin{align*}
    \E \left[
      \frac{\I_{i b r}(x) \I_{i b' r'}(x) \sigma^2(X_i)}
      {N_{b r}(x) N_{b' r'}(x)}
    \right]
    &=
    \sigma^2(x)
    \E \left[
      \frac{\I_{i b r}(x) \I_{i b' r'}(x)}
      {N_{b r}(x) N_{b' r'}(x)}
    \right]
    + \E \left[
      \frac{\I_{i b r}(x) \I_{i b' r'}(x)
      \big(\sigma^2(X_i) - \sigma^2(x) \big)}
      {N_{b r}(x) N_{b' r'}(x)}
    \right].
  \end{align*}
  Since $\sigma^2 \in \cH^{\betasigma}$,
  we have by Lemma~\ref{lem:largest_cell} and
  Lemma~\ref{lem:simple_moment_denominator} that
  \begin{align*}
    \frac{n^2}{\lambda^d}
    \E \left[
      \frac{\I_{i b r}(x) \I_{i b' r'}(x)
      \big|\sigma^2(X_i) - \sigma^2(x) \big|}
      {N_{b r}(x) N_{b' r'}(x)}
    \right]
    &\lesssim
    \frac{n^2}{\lambda^d}
    \frac{1}{n}
    \E \left[
      \frac{\I_{b' r'}(x) \max_j |T_{b r}(x)_j|} {N_{b' r'}(x)}
    \right]
    \lesssim
    \frac{1}{\lambda^{1 \wedge \betasigma}}.
  \end{align*}
  Therefore
  \begin{align*}
    \frac{n^2}{\lambda^d}
    \E \left[
      \frac{\I_{i b r}(x) \I_{i b' r'}(x) \sigma^2(X_i)}
      {N_{b r}(x) N_{b' r'}(x)}
    \right]
    &=
    \sigma^2(x)
    \frac{n^2}{\lambda^d}
    \E \left[
      \frac{\I_{i b r}(x) \I_{i b' r'}(x)}
      {N_{b r}(x) N_{b' r'}(x)}
    \right]
    + O \left( \frac{1}{\lambda^{1 \wedge \betasigma}} \right).
  \end{align*}
  Next, by conditioning on
  $T_{b r}$, $T_{b' r'}$, $N_{-i b r}(x)$, and $N_{-i b' r'}(x)$,
  \begin{align*}
    \E \left[
      \frac{\I_{i b r}(x) \I_{i b' r'}(x)}
      {N_{b r}(x) N_{b' r'}(x)}
    \right]
    &= \E \left[
      \frac{\int_{T_{b r}(x) \cap T_{b' r'}(x)} f(\xi) \diffi \xi}
      {(N_{-i b r}(x)+1) (N_{-i b' r'}(x)+1)}
    \right] \\
    &=
    \E \hspace*{-0.5mm} \left[
      \frac{f(x) |T_{b r}(x) \cap T_{b' r'}(x)|}
      {(N_{-i b r}(x)+1) (N_{-i b' r'}(x)+1)}
    \right]
    \hspace*{-0.8mm}
    +
    \hspace*{-0.5mm}
    \E \hspace*{-0.5mm} \left[
      \frac{\int_{T_{b r}(x) \cap T_{b' r'}(x)}
      (f(\xi) - f(x)) \diffi \xi}
      {(N_{-i b r}(x)+1) (N_{-i b' r'}(x)+1)}
    \right] \\
    &=
    f(x) \,
    \E \left[
      \frac{|T_{b r}(x) \cap T_{b' r'}(x)|}
      {(N_{-i b r}(x)+1) (N_{-i b' r'}(x)+1)}
    \right]
    + O \left(
      \frac{\lambda^d}{n^2}
      \frac{1}{\lambda^{1 \wedge \betaf}}
    \right)
  \end{align*}
  by an argument based on Lemma~\ref{lem:largest_cell},
  the H\"older property of $f(x)$,
  and the proof of Lemma~\ref{lem:simple_moment_denominator}.
  Hence
  \begin{align*}
    \frac{n^2}{\lambda^d}
    \E \left[
      \frac{\I_{i b r}(x) \I_{i b' r'}(x) \sigma^2(X_i)}
      {N_{b r}(x) N_{b' r'}(x)}
    \right]
    &=
    \sigma^2(x)
    f(x)
    \frac{n^2}{\lambda^d}
    \E \left[
      \frac{|T_{b r}(x) \cap T_{b' r'}(x)|}
      {(N_{-i b r}(x)+1) (N_{-i b' r'}(x)+1)}
    \right] \\
    &\quad+
    O \left(
      \frac{1}{\lambda^{1 \wedge \betaf \wedge \betasigma}}
    \right).
  \end{align*}
  Apply Lemma~\ref{lem:binomial_expectation}
  to approximate the expectation with
  $N_{-i b' r' \setminus b r}(x) =
  \sum_{j \neq i} \I\{X_j \in T_{b' r'}(x) \setminus T_{b r}(x)\}$:
  \begin{align*}
    &\E \left[
      \frac{|T_{b r}(x) \cap T_{b' r'}(x)|}
      {(N_{-i b r}(x)+1) (N_{-i b' r'}(x)+1)}
    \right] \\
    &\quad=
    \E \left[
      \frac{|T_{b r}(x) \cap T_{b' r'}(x)|}
      {N_{-i b r}(x)+1}
      \,\E \left[
        \frac{1}
        {N_{-i b' r' \cap b r}(x)+N_{-i b' r' \setminus b r}(x)+1}
        \Bigm| \bT, N_{-i b' r' \cap b r}(x), N_{-i b r \setminus b' r'}(x)
      \right]
    \right].
  \end{align*}
  Now conditional on
  $\bT$, $N_{-i b' r' \cap b r}(x)$, and $N_{-i b r \setminus b' r'}(x)$,
  \begin{align*}
    N_{-i b' r' \setminus b r}(x)
    &\sim \Bin\left(
      n - 1 - N_{-i b r}(x), \
      \frac{\int_{T_{b' r'}(x) \setminus T_{b r}(x)} f(\xi) \diffi \xi}
      {1 - \int_{T_{b r}(x)}
      f(\xi) \diffi \xi}
    \right).
  \end{align*}
  We bound these parameters above and below.
  Firstly, by applying Lemma~\ref{lem:active_data} with $B=1$,
  we have
  \begin{align*}
    \P \left( N_{-i b r}(x) >
      t^{d+1}
      \frac{n}{\lambda^d}
    \right)
    &\leq
    4 d e^{- t / (4 \|f\|_\infty(1 + 1/a_r))}
    \leq
    e^{- t / C}
  \end{align*}
  for some $C > 0$ and sufficiently large $t$.
  Next, note if $f$ is $\betaf$-H\"older with constant $L$,
  by Lemma~\ref{lem:largest_cell},
  \begin{align*}
    &\P \left(
      \left|
      \frac{\int_{T_{b' r'}(x) \setminus T_{b r}(x)} f(\xi) \diffi \xi}
      {1 - \int_{T_{b r}(x)} f(\xi)
      \diffi \xi}
      - f(x) |T_{b' r'}(x) \setminus T_{b r}(x)|
      \right|
      > 2L \,
      |T_{b' r'}(x) \setminus T_{b r}(x)|
      \sum_{j=1}^d |T_{b' r'}(x)_j|^{1 \wedge \betaf}
    \right) \\
    &\quad\leq
    \P \left(
      \frac{\int_{T_{b' r'}(x) \setminus T_{b r}(x)}
      |f(\xi) - f(x)| \diffi \xi}
      {1 - \int_{T_{b r}(x)} f(\xi)
      \diffi \xi}
      > 2L \,
      |T_{b' r'}(x) \setminus T_{b r}(x)|
      \sum_{j=1}^d |T_{b' r'}(x)_j|^{1 \wedge \betaf}
    \right) \\
    &\quad\leq
    \P \left(
      \frac{1}{1 - \int_{T_{b r}(x)} f(\xi)
      \diffi \xi}
      > 2
    \right)
    \leq
    \P \left(
      \int_{T_{b r}(x)} f(\xi) \diffi \xi
      > \frac{1}{2}
    \right) \\
    &\quad\leq
    \P \left(
      |T_{b r}(x)|
      > \frac{1}{2\|f\|_\infty}
    \right)
    \lesssim
    \P \left(
      \max_{1 \leq j \leq d}
      |T_{b r}(x)_j|^{1 \wedge \betaf}
      > \frac{1}{2 \|f\|_\infty}
    \right)
    \lesssim
    2 d e^{- \lambda / (4 \|f\|_\infty)}
    \lesssim e^{-\lambda/C},
  \end{align*}
  increasing $C$ as necessary.
  Thus with probability at least $1 - e^{-t/C} - e^{-\lambda/C}$,
  \begin{align*}
    N_{-i b' r' \setminus b r}(x)
    &\leq \Bin\Bigg(
      n, \,
      |T_{b' r'}(x) \setminus T_{b r}(x)|
      \Bigg( f(x) + 2 L \sum_{j=1}^{d} |T_{b' r'}(x)_j|^{1 \wedge \betaf} \Bigg)
    \Bigg) \\
    N_{-i b' r' \setminus b r}(x)
    &\geq
    \Bin\Bigg(
      n
      \left( 1 - \frac{t^{d+1}}{\lambda^d}
      - \frac{1}{n} \right), \,
      |T_{b' r'}(x) \setminus T_{b r}(x)|
      \Bigg( f(x) - 2 L \sum_{j=1}^{d} |T_{b' r'}(x)_j|^{1 \wedge \betaf} \Bigg)
    \Bigg).
  \end{align*}
  So by Lemma~\ref{lem:binomial_expectation} conditionally on
  $\bT$, $N_{-i b' r' \cap b r}(x)$, and $N_{-i b r \setminus b' r'}(x)$,
  taking $t = 4 C \log n$ and recalling $\lambda \gtrsim (\log n)^3$,
  with probability at least $1 - n^{-3}$,
  \begin{align*}
    &\left|
    \E \left[
      \frac{1}
      {N_{-i b' r' \cap b r}(x)+N_{-i b' r' \setminus b r}(x)+1}
      \Bigm| \bT, N_{-i b' r' \cap b r}(x), N_{-i b r \setminus b' r'}(x)
    \right]
    \right.
    \\
    &\left.
    \qquad-
    \frac{1}
    {N_{-i b' r' \cap b r}(x) + n f(x) |T_{b' r'}(x) \setminus T_{b r}(x)|+1}
    \right|
    \lesssim
    \frac{1 + n |T_{b' r'}(x)_j|^{1 \wedge \betaf}
    |T_{b' r'}(x) \setminus T_{b r}(x)|}
    {\left(N_{-i b' r' \cap b r}(x)
    + n |T_{b' r'}(x) \setminus T_{b r}(x)|+1\right)^2}.
  \end{align*}
  Therefore by the same approach as the proof of
  Lemma~\ref{lem:moment_denominator},
  \begin{align*}
    &
    \left|
    \E \left[
      \frac{|T_{b r}(x) \cap T_{b' r'}(x)|}
      {(N_{-i b r}(x)+1) (N_{-i b' r'}(x)+1)}
      - \frac{|T_{b r}(x) \cap T_{b' r'}(x)|}
      {(N_{-i b r}(x) \! + \! 1)
        (N_{-i b' r' \cap b r}(x)+n f(x)
      |T_{b' r'}(x) \setminus T_{b r}(x)|+1)}
    \right]
    \right| \\
    &\quad\lesssim
    \E \left[
      \frac{|T_{b r}(x) \cap T_{b' r'}(x)|}{N_{-i b r}(x)+1}
      \frac{1 + n |T_{b' r'}(x)_j|^{1 \wedge \betaf}
      |T_{b' r'}(x) \setminus T_{b r}(x)|}
      {\left(N_{-i b' r' \cap b r}(x)
      + n |T_{b' r'}(x) \setminus T_{b r}(x)|+1\right)^2}
    \right]
    + n^{-3} \\
    &\quad\lesssim
    \E \left[
      \frac{|T_{b r}(x) \cap T_{b' r'}(x)|}
      {n |T_{b r}(x)|+1}
      \frac{1 + n |T_{b' r'}(x)_j|^{1 \wedge \betaf}
      |T_{b' r'}(x) \setminus T_{b r}(x)|}
      {(n |T_{b' r'}(x)| + 1)^2}
    \right]
    + n^{-3} \\
    &\quad\lesssim
    \E \left[
      \frac{1}{n}
      \frac{1}
      {(n |T_{b r}(x)| + 1) (n |T_{b' r'}(x)| + 1)}
      + \frac{1}{n}
      \frac{|T_{b' r'}(x)_j|^{1 \wedge \betaf}}
      {n |T_{b' r'}(x)| + 1}
    \right]
    + n^{-3} \\
    &\quad\lesssim
    \frac{\lambda^{2d}}{n^3}
    + \frac{1}{n \lambda^{1 \wedge \betaf}}
    \frac{\lambda^d}{n}
    \lesssim
    \frac{\lambda^d}{n^2}
    \left(
      \frac{\lambda^{d}}{n}
      + \frac{1}{\lambda^{1 \wedge \betaf}}
    \right).
  \end{align*}
  Now apply the same argument to the other
  term in the expectation, to see that
  \begin{align*}
    &\left|
    \E \left[
      \frac{1}
      {N_{-i b r \cap b' r'}(x)+N_{-i b r \setminus b' r'}(x)+1}
      \Bigm| \bT, N_{-i b r \cap b' r'}(x), N_{-i b' r' \setminus b r}(x)
    \right]
    \right.
    \\
    &\left.
    \qquad-
    \frac{1}
    {N_{-i b r \cap b' r'}(x) + n f(x) |T_{b r}(x) \setminus T_{b' r'}(x)|+1}
    \right|
    \lesssim
    \frac{1 + n |T_{b r}(x)_j|^{1 \wedge \betaf}
    |T_{b r}(x) \setminus T_{b' r'}(x)|}
    {\left(N_{-i b r \cap b' r'}(x)
    + n |T_{b r}(x) \setminus T_{b' r'}(x)|+1\right)^2}.
  \end{align*}
  with probability at least $1 - n^{-3} - e^{-\lambda/C}$,
  and so likewise again with $t = 4 C \log n$,
  \begin{align*}
    &\frac{n^2}{\lambda^d}
    \left|
    \E \left[
      \frac{|T_{b r}(x) \cap T_{b' r'}(x)|}{N_{-i b r}(x)+1}
      \frac{1}
      {N_{-i b' r' \cap b r}(x)+n f(x) |T_{b' r'}(x) \setminus T_{b r}(x)|+1}
    \right]
    \right.
    \\
    &\left.
    \quad-
    \E \left[
      \frac{|T_{b r}(x) \cap T_{b' r'}(x)|}
      {N_{-i b r \cap b' r'}(x) + n f(x) |T_{b r}(x) \setminus T_{b' r'}(x)|+1}
      \frac{1}
      {N_{-i b' r' \cap b r}(x)+n f(x) |T_{b' r'}(x) \setminus T_{b r}(x)|+1}
    \right]
    \right| \\
    &\lesssim
    \frac{n^2}{\lambda^d} \,
    \E \left[
      \frac{1 + n |T_{b r}(x)_j|^{1 \wedge \betaf}
      |T_{b r}(x) \setminus T_{b' r'}(x)|}
      {\left(N_{-i b r \cap b' r'}(x)
      + n |T_{b r}(x) \setminus T_{b' r'}(x)|+1\right)^2}
      \frac{|T_{b r}(x) \cap T_{b' r'}(x)|}
      {N_{-i b' r' \cap b r}(x)+n f(x) |T_{b' r'}(x) \setminus T_{b r}(x)|+1}
    \right] \\
    &\quad+
    \frac{n^2}{\lambda^d}
    n^{-3}
    \lesssim
    \frac{\lambda^d}{n}
    + \frac{1}{\lambda^{1 \wedge \betaf}}.
  \end{align*}
  Thus far we have proven that
  \begin{align*}
    &\frac{n^2}{\lambda^d}
    \E \left[
      \frac{\I_{i b r}(x) \I_{i b' r'}(x) \sigma^2(X_i)}
      {N_{b r}(x) N_{b' r'}(x)}
    \right]
    = \sigma^2(x)
    f(x)
    \frac{n^2}{\lambda^d} \\
    &\quad\times
    \E \left[
      \frac{|T_{b r}(x) \cap T_{b' r'}(x)|}
      {N_{-i b r \cap b' r'}(x) + n f(x) |T_{b r}(x) \setminus T_{b' r'}(x)|+1}
      \frac{1}
      {N_{-i b' r' \cap b r}(x)+n f(x) |T_{b' r'}(x) \setminus T_{b r}(x)|+1}
    \right] \\
    &\qquad+
    O \left(
      \frac{1}{\lambda^{1 \wedge \betaf \wedge \betasigma}}
      + \frac{\lambda^d}{n}
    \right).
  \end{align*}
  Next we remove the $N_{-i b r \cap b' r'}(x)$ terms.
  As before, with probability at least $1 - e^{-t/C} - e^{-\lambda/C}$,
  conditional on $\bT$,
  \begin{align*}
    N_{-i b r \cap b' r'}(x)
    &\leq \Bin\Bigg(
      n, \,
      |T_{b r}(x) \cap T_{b' r'}(x)|
      \Bigg( f(x) + 2L \sum_{j=1}^d |T_{b r}(x)_j|^{1 \wedge \betaf} \Bigg)
    \Bigg), \\
    N_{-i b r \cap b' r'}(x)
    &\geq
    \Bin\Bigg(
      n
      \left( 1 - \frac{t^{d+1}}{\lambda^d}
      - \frac{1}{n} \right), \,
      |T_{b r}(x) \cap T_{b' r'}(x)|
      \Bigg( f(x) - 2L \sum_{j=1}^d |T_{b r}(x)_j|^{1 \wedge \betaf} \Bigg)
    \Bigg).
  \end{align*}
  So by Lemma~\ref{lem:binomial_expectation}
  conditionally on $\bT$, with $t = 4 C \log n$ and
  with probability at least $1 - n^{-3}$,
  \begin{align*}
    &
    \left|
    \E \left[
      \frac{1}
      {N_{-i b r \cap b' r'}(x) + n f(x) |T_{b r}(x) \setminus T_{b' r'}(x)|+1}
      \frac{1}
      {N_{-i b' r' \cap b r}(x)+n f(x) |T_{b' r'}(x) \setminus T_{b r}(x)|+1}
      \Bigm| \bT
    \right]
    \right.
    \\
    &\left.
    \qquad-
    \frac{1}
    {n f(x) |T_{b r}(x)|+1}
    \frac{1}
    {n f(x) |T_{b' r'}(x)|+1}
    \right| \\
    &\quad\lesssim
    \frac{1 + n |T_{b r}(x)_j|^{1 \wedge \betaf}
    |T_{b r}(x) \cap T_{b' r'}(x)|}
    {(n |T_{b r}(x)| + 1)(n |T_{b' r'}(x)| + 1)}
    \left(
      \frac{1}{n |T_{b r}(x)| + 1}
      + \frac{1}{n |T_{b' r'}(x)| + 1}
    \right).
  \end{align*}
  Now by Lemma~\ref{lem:moment_cell},
  \begin{align*}
    &\frac{n^2}{\lambda^d}
    \left|
    \E \left[
      \frac{|T_{b r}(x) \cap T_{b' r'}(x)|}
      {N_{-i b r \cap b' r'}(x) + n f(x) |T_{b r}(x) \setminus T_{b' r'}(x)|+1}
      \frac{1}
      {N_{-i b' r' \cap b r}(x)+n f(x) |T_{b' r'}(x) \setminus T_{b r}(x)|+1}
    \right]
    \right.
    \\
    &\left.
    \qquad-
    \E \left[
      \frac{|T_{b r}(x) \cap T_{b' r'}(x)|}
      {n f(x) |T_{b r}(x)|+1}
      \frac{1}
      {n f(x) |T_{b' r'}(x)|+1}
    \right]
    \right| \\
    &\quad\lesssim
    \frac{n^2}{\lambda^d}
    \E \left[
      \frac{1 + n |T_{b r}(x)_j|^{1 \wedge \betaf}
      |T_{b r}(x) \cap T_{b' r'}(x)|}
      {(n |T_{b r}(x)| + 1)(n |T_{b' r'}(x)| + 1)}
      \left(
        \frac{|T_{b r}(x) \cap T_{b' r'}(x)|}{n |T_{b r}(x)| + 1}
        + \frac{|T_{b r}(x) \cap T_{b' r'}(x)|}{n |T_{b' r'}(x)| + 1}
      \right)
    \right]
    + \frac{1}{n \lambda^d} \\
    &\quad\lesssim
    \frac{n^2}{\lambda^d}
    \frac{1}{n^3}
    \E \left[
      \frac{1 + n |T_{b r}(x)_j|^{1 \wedge \betaf}
      |T_{b r}(x) \cap T_{b' r'}(x)|}
      {|T_{b r}(x)| |T_{b' r'}(x)|}
    \right]
    + \frac{1}{n \lambda^d} \\
    &\quad\lesssim
    \frac{1}{n \lambda^d}
    \E \left[
      \frac{1}{|T_{b r}(x)| |T_{b' r'}(x)|}
    \right]
    + \frac{1}{\lambda^d}
    \E \left[
      \frac{|T_{b r}(x)_j|^{1 \wedge \betaf}}{|T_{b' r'}(x)|}
    \right]
    + \frac{1}{n \lambda^d}
    \lesssim
    \frac{\lambda^d}{n}
    + \frac{1}{\lambda^{1 \wedge \betaf}}.
  \end{align*}
  This allows us to deduce that
  \begin{align*}
    \frac{n^2}{\lambda^d}
    \E \left[
      \frac{\I_{i b r}(x) \I_{i b' r'}(x) \sigma^2(X_i)}
      {N_{b r}(x) N_{b' r'}(x)}
    \right]
    &=
    \sigma^2(x)
    f(x)
    \frac{n^2}{\lambda^d}
    \E \left[
      \frac{|T_{b r}(x) \cap T_{b' r'}(x)|}
      {(n f(x) |T_{b r}(x)|+1)(n f(x) |T_{b' r'}(x)|+1)}
    \right] \\
    &\quad+
    O \left(
      \frac{1}{\lambda^{1 \wedge \betaf \wedge \betasigma}}
      + \frac{\lambda^d}{n}
    \right),
  \end{align*}
  and so
  \begin{align*}
    \E \left[ \tilde\Sigma_\rd(x) \right]
    &=
    \sigma^2(x)
    f(x)
    \frac{n^2}{\lambda^d}
    \sum_{r=0}^{J}
    \sum_{r'=0}^{J}
    \omega_r
    \omega_{r'}
    \E \left[
      \frac{|T_{b r}(x) \cap T_{b' r'}(x)|}
      {(n f(x) |T_{b r}(x)|+1)(n f(x) |T_{b' r'}(x)|+1)}
    \right] \\
    &\quad+
    O \left(
      \frac{1}{\lambda^{1 \wedge \betaf \wedge \betasigma}}
      + \frac{\lambda^d}{n}
      + \frac{1}{B}
    \right).
  \end{align*}

  \proofparagraph{calculating the limiting variance $\Sigma_\rd(x)$}

  Now that we have reduced the variance to an expression
  only involving the sizes of Mondrian cells,
  we can exploit their exact distribution to compute this expectation.
  Recall from \citet[Proposition~1]{mourtada2020minimax}
  that we can write
  \begin{align*}
    &|T_{b r}(x)|
    = \prod_{j=1}^{d}
    \left(
      \frac{E_{1j}}{a_r \lambda} \wedge x_j
      + \frac{E_{2j}}{a_r \lambda} \wedge (1 - x_j)
    \right),
    \qquad
    |T_{b' r'}(x)|
    =
    \prod_{j=1}^{d}
    \left(
      \frac{E_{3j}}{a_{r'} \lambda} \wedge x_j
      +  \frac{E_{4j}}{a_{r'} \lambda} \wedge (1 - x_j)
    \right), \\
    &|T_{b r }(x)\cap T_{b' r'}(x)|
    = \prod_{j=1}^{d}
    \left(
      \frac{E_{1j}}{a_r \lambda} \wedge
      \frac{E_{3j}}{a_{r'} \lambda}
      \wedge x_j
      +  \frac{E_{2j}}{a_r \lambda} \wedge
      \frac{E_{4j}}{a_{r'} \lambda}
      \wedge (1 - x_j)
    \right)
  \end{align*}
  where $E_{1j}$, $E_{2j}$, $E_{3j}$, and $E_{4j}$
  are independent and $\Exp(1)$.
  Define their non-truncated versions as
  \begin{align*}
    |\tilde T_{b r}(x)|
    &=
    a_r^{-d}
    \lambda^{-d}
    \prod_{j=1}^{d}
    \left( E_{1j} + E_{2j} \right),
    &|\tilde T_{b' r'}(x)|
    &=
    a_{r'}^{-d}
    \lambda^{-d}
    \prod_{j=1}^{d}
    \left( E_{3j} + E_{4j} \right), \\
    |\tilde T_{b r}(x) \cap \tilde T_{b' r'}(x)|
    &=
    \lambda^{-d}
    \prod_{j=1}^{d}
    \left(
      \frac{E_{1j}}{a_r}
      \wedge
      \frac{E_{3j}}{a_{r'}}
      + \frac{E_{2j}}{a_r}
      \wedge
      \frac{E_{4j}}{a_{r'}}
    \right),
  \end{align*}
  and note that
  \begin{align*}
    &\P \left(
      \big( \tilde T_{b r}(x), \tilde T_{b' r'}(x),
      \tilde T_{b r}(x) \cap T_{b' r'}(x) \big)
      \neq
      \big( T_{b r}(x), T_{b' r'}(x), T_{b r}(x) \cap T_{b' r'}(x) \big)
    \right) \\
    &\quad\leq
    \sum_{j=1}^{d}
    \big(
      \P(E_{1j} \geq a_r \lambda x_j)
      + \P(E_{3j} \geq a_{r'} \lambda x_j)
      + \P(E_{2j} \geq a_r \lambda (1 - x_j))
      + \P(E_{4j} \geq a_{r'} \lambda (1 - x_j))
    \big) \\
    &\quad\leq
    e^{-C \lambda}
  \end{align*}
  for some $C > 0$ and sufficiently large $\lambda$.
  Hence by the Cauchy--Schwarz inequality
  and Lemma~\ref{lem:moment_cell},
  \begin{align*}
    &
    \frac{n^2}{\lambda^d}
    \left|
    \E \left[
      \frac{|T_{b r}(x) \cap T_{b' r'}(x)|}
      {n f(x) |T_{b r}(x)|+1}
      \frac{1}
      {n f(x) |T_{b' r'}(x)|+1}
    \right]
    - \E \left[
      \frac{|\tilde T_{b r}(x) \cap T_{b' r'}(x)|}
      {n f(x) |\tilde T_{b r}(x)|+1}
      \frac{1}
      {n f(x) |\tilde T_{b' r'}(x)|+1}
    \right]
    \right| \\
    &\quad\lesssim
    \frac{n^2}{\lambda^d}
    e^{-C \lambda}
    \lesssim
    \frac{1}{n \lambda^d}
  \end{align*}
  as $\lambda \gtrsim (\log n)^3$. Therefore
  \begin{align*}
    \frac{n^2}{\lambda^d}
    \E \left[
      \frac{\I_{i b r}(x) \I_{i b' r'}(x) \sigma^2(X_i)}
      {N_{b r}(x) N_{b' r'}(x)}
    \right]
    &=
    \sigma^2(x)
    f(x)
    \frac{n^2}{\lambda^d}
    \E \left[
      \frac{|\tilde T_{b r}(x) \cap \tilde T_{b' r'}(x)|}
      {(n f(x) |\tilde T_{b r}(x)|+1)(n f(x) |\tilde T_{b' r'}(x)|+1)}
    \right] \\
    &\quad+
    O \left(
      \frac{1}{\lambda^{1 \wedge \betaf \wedge \betasigma}}
      + \frac{\lambda^d}{n}
    \right).
  \end{align*}
  Now we remove the superfluous units in the denominators.
  Firstly, by independence of the trees,
  \begin{align*}
    &
    \frac{n^2}{\lambda^d}
    \left|
    \E \left[
      \frac{|\tilde T_{b r}(x) \cap \tilde T_{b' r'}(x)|}
      {(n f(x) |\tilde T_{b r}(x)|+1)(n f(x) |\tilde T_{b' r'}(x)|+1)}
    \right]
    - \E \left[
      \frac{|\tilde T_{b r}(x) \cap \tilde T_{b' r'}(x)|}
      {(n f(x) |\tilde T_{b r}(x)|+1)(n f(x) |\tilde T_{b' r'}(x)|)}
    \right]
    \right| \\
    &\quad\lesssim
    \frac{n^2}{\lambda^d}
    \E \left[
      \frac{|\tilde T_{b r}(x) \cap \tilde T_{b' r'}(x)|}
      {n |\tilde T_{b r}(x)|}
      \frac{1}
      {n^2 |\tilde T_{b' r'}(x)|^2}
    \right]
    \lesssim
    \frac{1}{n \lambda^d}
    \E \left[
      \frac{1}{|T_{b r}(x)|}
    \right]
    \E \left[
      \frac{1}{|T_{b' r'}(x)|}
    \right]
    \lesssim
    \frac{\lambda^d}{n}.
  \end{align*}
  Secondly, we have in exactly the same manner that
  \begin{align*}
    \frac{n^2}{\lambda^d}
    \left|
    \E \left[
      \frac{|\tilde T_{b r}(x) \cap T_{b' r'}(x)|}
      {(n f(x) |\tilde T_{b r}(x)|+1)(n f(x) |\tilde T_{b' r'}(x)|)}
    \right]
    - \E \left[
      \frac{|\tilde T_{b r}(x) \cap T_{b' r'}(x)|}
      {n^2 f(x)^2 |\tilde T_{b r}(x)| |\tilde T_{b' r'}(x)|}
    \right]
    \right|
    &\lesssim
    \frac{\lambda^d}{n}.
  \end{align*}
  Therefore
  \begin{align*}
    \frac{n^2}{\lambda^d}
    \E \left[
      \frac{\I_{i b r}(x) \I_{i b' r'}(x) \sigma^2(X_i)}
      {N_{b r}(x) N_{b' r'}(x)}
    \right]
    &=
    \frac{\sigma^2(x)}{f(x)}
    \frac{1}{\lambda^d}
    \E \left[
      \frac{|\tilde T_{b r}(x) \cap \tilde T_{b' r'}(x)|}
      {|\tilde T_{b r}(x)| |\tilde T_{b' r'}(x)|}
    \right]
    + O \left(
      \frac{1}{\lambda^{1 \wedge \betaf \wedge \betasigma}}
      + \frac{\lambda^d}{n}
    \right).
  \end{align*}
  It remains to compute this integral.
  By independence over $1 \leq j \leq d$,
  \begin{align*}
    &\E \left[
      \frac{|\tilde T_{b r}(x) \cap \tilde T_{b' r'}(x)|}
      {|\tilde T_{b r}(x)| |\tilde T_{b' r'}(x)|}
    \right] \\
    &\quad=
    a_r^d a_{r'}^d \lambda^d
    \prod_{j=1}^d
    \E \left[
      \frac{ (E_{1j} / a_r) \wedge (E_{3j} / a_{r'})
      + (E_{2j} a_r) \wedge (E_{4j} / a_{r'}) }
      { \left( E_{1j} + E_{2j} \right) \left( E_{3j} + E_{4j} \right)}
    \right] \\
    &\quad=
    2^d a_r^d a_{r'}^d \lambda^d
    \prod_{j=1}^d
    \E \left[
      \frac{ (E_{1j} / a_r) \wedge (E_{3j} / a_{r'})}
      { \left( E_{1j} + E_{2j} \right) \left( E_{3j} + E_{4j} \right) }
    \right] \\
    &\quad=
    2^d a_r^d a_{r'}^d \lambda^d
    \prod_{j=1}^d
    \int_{0}^{\infty}
    \int_{0}^{\infty}
    \int_{0}^{\infty}
    \int_{0}^{\infty}
    \frac{ (t_1 / a_r) \wedge (t_3 / a_{r'}) }
    { \left( t_1 + t_2 \right) \left( t_3 + t_4 \right) }
    e^{-t_1 - t_2 - t_3 - t_4}
    \diffi t_1
    \diffi t_2
    \diffi t_3
    \diffi t_4 \\
    &\quad=
    2^d a_r^d a_{r'}^d \lambda^d
    \prod_{j=1}^d
    \int_{0}^{\infty}
    \int_{0}^{\infty}
    ((t_1 / a_r) \wedge (t_3 / a_{r'}))
    e^{-t_1 - t_3}
    \left(
      \int_{0}^{\infty}
      \frac{e^{-t_2}}{t_1 + t_2}
      \diffi t_2
    \right)
    \left(
      \int_{0}^{\infty}
      \frac{e^{-t_4}}{t_3 + t_4}
      \diffi t_4
    \right)
    \diffi t_1
    \diffi t_3 \\
    &\quad=
    2^d a_r^d a_{r'}^d \lambda^d
    \prod_{j=1}^d
    \int_{0}^{\infty}
    \int_{0}^{\infty}
    ((t / a_r) \wedge (s / a_{r'}))
    \Gamma(0, t)
    \Gamma(0, s)
    \diffi t
    \diffi s,
  \end{align*}
  where we used
  $\int_0^\infty \frac{e^{-t}}{a + t} \diffi t = e^a \Gamma(0, a)$
  with $\Gamma(0, a) = \int_a^\infty \frac{e^{-t}}{t} \diffi t$
  the upper incomplete gamma function. Now
  \begin{align*}
    &2
    \int_{0}^{\infty}
    \int_{0}^{\infty}
    ((t / a_r) \wedge (s / a_{r'}))
    \Gamma(0, t)
    \Gamma(0, s)
    \diffi t
    \diffi s \\
    &\quad=
    \int_0^\infty
    \Gamma(0, t)
    \left(
      \frac{1}{a_{r'}}
      \int_0^{a_{r'} t / a_r}
      2 s \Gamma(0, s)
      \diffi{s}
      +
      \frac{t}{a_r}
      \int_{a_{r'} t / a_r}^\infty
      2 \Gamma(0, s)
      \diffi{s}
    \right)
    \diffi{t} \\
    &\quad=
    \int_0^\infty
    \Gamma(0, t)
    \left(
      \frac{t}{a_r}
      e^{- \frac{a_{r'}}{a_r}t}
      - \frac{1}{a_{r'}} e^{- \frac{a_{r'}}{a_r}t}
      + \frac{1}{a_{r'}}
      - \frac{a_{r'}}{a_r^2} t^2
      \Gamma\left(0, \frac{a_{r'}}{a_r} t\right)
    \right)
    \diffi{t} \\
    &\quad=
    \frac{1}{a_r}
    \int_0^\infty
    \hspace*{-1mm}
    t e^{- \frac{a_{r'}}{a_r} t}
    \Gamma(0, t)
    \diffi{t}
    - \frac{1}{a_{r'}}
    \int_0^\infty
    \hspace*{-1mm}
    e^{- \frac{a_{r'}}{a_r} t}
    \Gamma(0, t)
    \diffi{t} \\
    &\qquad+
    \frac{1}{a_{r'}}
    \int_0^\infty
    \hspace*{-1mm}
    \Gamma(0, t)
    \diffi{t}
    -
    \frac{a_{r'}}{a_r^2}
    \int_0^\infty
    \hspace*{-1mm}
    t^2 \Gamma\left(0, \frac{a_{r'}}{a_r} t\right)
    \Gamma(0, t)
    \diffi{t},
  \end{align*}
  since
  $\int_0^a 2 t \Gamma(0, t) \diffi t = a^2 \Gamma(0, a) - a e^{-a} -e^{-a} + 1$
  and
  $\int_a^\infty \Gamma(0, t) \diffi t = e^{-a} - a \Gamma(0, a)$.
  Next, we use
  $ \int_{0}^{\infty} \Gamma(0, t) \diffi t = 1$,
  $\int_{0}^{\infty} e^{-at} \Gamma(0, t) \diffi t
  = \frac{\log(1+a)}{a}$,
  $\int_{0}^{\infty} t e^{-at} \Gamma(0, t) \diffi t
  = \frac{\log(1+a)}{a^2} - \frac{1}{a(a+1)}$
  and
  $\int_{0}^{\infty} t^2 \Gamma(0, t) \Gamma(0, at) \diffi t
  = - \frac{2a^2 + a + 2}{3a^2 (a+1)} + \frac{2(a^3 + 1) \log(a+1)}{3a^3}
  - \frac{2 \log a}{3}$
  to see
  \begin{align*}
    &2
    \int_{0}^{\infty}
    \int_{0}^{\infty}
    ((t / a_r) \wedge (s / a_{r'}))
    \Gamma(0, t)
    \Gamma(0, s)
    \diffi t
    \diffi s \\
    &\quad=
    \frac{a_r \log(1+a_{r'} / a_r)}{a_{r'}^2}
    - \frac{a_r / a_{r'}}{a_r + a_{r'}}
    - \frac{a_r \log(1 + a_{r'} / a_r)}{a_{r'}^2}
    + \frac{1}{a_{r'}} \\
    &\qquad+
    \frac{2 a_{r'}^2 + a_r a_{r'} + 2 a_r^2}
    {3 a_r a_{r'} (a_r + a_{r'})}
    - \frac{2(a_{r'}^3 + a_r^3) \log(a_{r'} / a_r+1)}{3 a_r^2 a_{r'}^2}
    + \frac{2 a_{r'} \log (a_{r'} / a_r)}{3 a_r^2} \\
    &\quad=
    \frac{2}{3 a_r}
    \left(
      1 - \frac{a_{r'}}{a_r}
      \log\left(\frac{a_{r}}{a_{r'}} + 1\right)
    \right)
    + \frac{2}{3 a_{r'}}
    \left(
      1 - \frac{a_r }{a_{r'}}
      \log\left(\frac{a_{r'}}{a_{r}} + 1\right)
    \right).
  \end{align*}
  Finally we conclude this part by giving the limiting variance.
  \begin{align*}
    &\E \left[ \tilde\Sigma_\rd(x) \right] \\
    &\quad=
    \frac{\sigma^2(x)}{f(x)}
    \sum_{r=0}^{J}
    \sum_{r'=0}^{J}
    \omega_r
    \omega_{r'}
    \left(
      \frac{2 a_{r'}}{3}
      \left(
        1 - \frac{a_{r'}}{a_r}
        \log\left(\frac{a_r}{a_{r'}} + 1\right)
      \right)
      + \frac{2 a_r}{3}
      \left(
        1 - \frac{a_r}{a_{r'}}
        \log\left(\frac{a_{r'}}{a_r} + 1\right)
      \right)
    \right)^d \\
    &\qquad+
    O \left(
      \frac{1}{\lambda^{1 \wedge \betaf \wedge \betasigma}}
      + \frac{\lambda^d}{n}
      + \frac{1}{B}
    \right) \\
    &\quad= \Sigma_\rd(x)
    + O \left(
      \frac{1}{\lambda^{1 \wedge \betaf \wedge \betasigma}}
      + \frac{\lambda^d}{n}
      + \frac{1}{B}
    \right).
  \end{align*}
  It follows from this and the previous part that
  \begin{align*}
    \E \left[ \big(
        \tilde\Sigma_\rd(x) - \Sigma_\rd(x)
    \big)^2 \right]
    &\lesssim
    \frac{\lambda^d}{n}
    + \frac{1}{B}
    + \frac{1}{\lambda^{2(1 \wedge \betaf \wedge \betasigma)}}.
  \end{align*}

  \proofparagraph{a lower bound for the second moment with a single tree}

  We finally show here that if $B=1$ (a forest with a single tree),
  then $\tilde\Sigma_\rd(x)$ has a divergent second moment.
  We take $J = 0$ for brevity (no debiasing),
  and recall that $\sigma^2(x)$ and $f(x)$ are bounded below.
  Further, since $\lambda T(x)_j \leq \Gamma(2, 1)$,
  we have by Jensen's inequality that
  \begin{align*}
    \E \left[
      \tilde \Sigma_\rd(x)^2
    \right]
    &=
    \E \left[
      \left(
        \frac{n}{\lambda^d}
        \sum_{i=1}^n
        \frac{\I \{X_i \in T(x)\} \sigma^2(X_i)}{N(x)^2}
      \right)^2
    \right]
    \gtrsim
    \frac{n^2}{\lambda^{2 d}}
    \E \left[
      \frac{1}{N(x)^4}
      \left(
        \sum_{i=1}^n
        \I\{X_i \in T(x)\}
      \right)^2
    \right] \\
    &=
    \frac{n^2}{\lambda^{2 d}}
    \sum_{i=1}^n
    \E \left[
      \frac{\I \{X_i \in T(x)\}}{N(x)^3}
    \right]
    =
    \frac{n^3}{\lambda^{2 d}}
    \E \left[
      \frac{\I \{X_i \in T(x)\}}{(N_{-i}(x) + 1)^3}
    \right]
    \gtrsim
    \frac{n^3}{\lambda^{2 d}}
    \E \left[
      \frac{|T(x)|}{\E \left[ (N_{-i}(x) + 1)^3 \mid T \right]}
    \right] \\
    &\gtrsim
    \frac{n^3}{\lambda^{2 d}}
    \E \left[
      \frac{|T(x)|}{n^3 |T(x)|^3 + 1}
    \right]
    \gtrsim
    \frac{1}{\lambda^{2 d}}
    \E \left[
      \frac{\I \{|T(x)| \geq 1/n\}}{|T(x)|^2}
    \right]
    \gtrsim
    \frac{1}{\lambda^{2 d}}
    \E \left[
      \frac{\I \{|T(x)_j| \geq 1/n\}}{|T(x)_j|^2}
    \right]^d \\
    &\gtrsim
    \left(
      \int_{1/n}^{1}
      \frac{s e^{-s}}{s^2}
      \diffi s
    \right)^d
    \gtrsim
    \left(
      \int_{1/n}^{1}
      \frac{1}{s}
      \diffi s
    \right)^d
    \gtrsim
    \left( \log n \right)^d.
  \end{align*}
\end{proof}

\begin{proof}[Theorem~\ref{thm:minimax}]

  The bias--variance decomposition with Lemma~\ref{lem:bias_debiased}
  and \eqref{eq:variance_bound_debiased},
  along with the proof of Theorem~\ref{thm:clt_debiased},
  setting $J = \lfloor \flbeta / 2 \rfloor$, gives
  \begin{align*}
    \E \left[
      \big(
        \hat \mu_\rd(x)
        - \mu(x)
      \big)^2
    \right]
    &=
    \E \left[
      \big(
        \hat \mu_\rd(x)
        - \E \left[ \hat \mu_\rd(x) \mid \bX, \bT \right]
      \big)^2
    \right]
    + \E \left[
      \big(
        \E \left[ \hat \mu_\rd(x) \mid \bX, \bT \right]
        - \mu(x)
      \big)^2
    \right] \\
    &\lesssim
    \frac{\lambda^d}{n}
    + \frac{1}{\lambda^{2\beta}}
    + \frac{1}{\lambda^{2(1 \wedge \beta)} B}.
  \end{align*}
  As $\lambda \asymp n^{\frac{1}{d + 2 \beta}}$
  and $B \gtrsim n^{\frac{2 \beta - 2 (1 \wedge \beta)}{d + 2 \beta}}$,
  we have
  \begin{align*}
    \E \left[
      \big(
        \hat \mu_\rd(x)
        - \mu(x)
      \big)^2
    \right]
    &\lesssim
    n^{-\frac{2\beta}{d + 2 \beta}}.
  \end{align*}
\end{proof}

\begin{proof}[Theorem~\ref{thm:clt_debiased}]

  Define
  $S_i(x) =
  \sqrt{n / \lambda^d} \sum_{r=0}^{J} \omega_r
  \frac{1}{B} \sum_{b=1}^B
  \frac{\I_{i b r}(x) \varepsilon_i} {N_{b r}(x)}$,
  which are independent and zero mean conditional on
  $(\bX, \bT)$, and satisfy
  \begin{align*}
    \sqrt{\frac{n}{\lambda^d}}
    \big(
      \hat\mu_\rd(x)
      - \E\left[
        \hat\mu_\rd(x) \mid \bX, \bT
      \right]
    \big)
    = \sum_{i=1}^n S_i(x).
  \end{align*}
  Therefore by \citet[Theorem~5.7]{Petrov_1995_Book} conditional on
  $(\bX, \bT)$, with $\zeta = \delta \wedge 1$,
  \begin{align*}
    \sup_{t \in \R}
    \left|
    \P \left(
      \tilde \Sigma_\rd(x)^{-1/2}
      \sum_{i=1}^{n} S_i
      \leq t
      \Bigm| \bX, \bT
    \right)
    - \Phi(t)
    \right|
    \lesssim
    1 \wedge \left(
      \tilde \Sigma_\rd(x)^{-1 - \zeta / 2}
      \sum_{i=1}^{n}
      \E \left[ |S_i|^{2 + \zeta} \mid \bX, \bT \right]
    \right).
  \end{align*}
  It immediately follows that
  \begin{align*}
    \sup_{t \in \R}
    \left|
    \P \left(
      \tilde \Sigma_\rd(x)^{-1/2}
      \sum_{i=1}^{n} S_i
      \leq t
    \right)
    - \Phi(t)
    \right|
    &=
    \sup_{t \in \R}
    \left|
    \E \left[
      \P \left(
        \tilde \Sigma_\rd(x)^{-1/2}
        \sum_{i=1}^{n} S_i
        \leq t
        \Bigm| \bX, \bT
      \right)
    \right]
    - \Phi(t)
    \right| \\
    &\leq
    \E \left[
      \sup_{t \in \R}
      \left|
      \P \left(
        \tilde \Sigma_\rd(x)^{-1/2}
        \sum_{i=1}^{n} S_i
        \leq t
        \Bigm| \bX, \bT
      \right)
      - \Phi(t)
      \right|
    \right] \\
    &\lesssim
    \E \left[
      1 \wedge
      \left(
        \tilde \Sigma_\rd(x)^{-1 - \zeta / 2}
        \sum_{i=1}^{n}
        \E \left[ |S_i|^{2 + \zeta} \mid \bX, \bT \right]
      \right)
    \right].
  \end{align*}
  To bound this quantity, we first partition by the
  event that $\tilde \Sigma_\rd(x)$ is bounded away from zero:
  \begin{align}
    \nonumber
    \E \left[
      1 \wedge
      \left(
        \tilde \Sigma_\rd(x)^{-1 - \zeta / 2}
        \sum_{i=1}^{n}
        \E \left[ |S_i|^{2 + \zeta} \mid \bX, \bT \right]
      \right)
    \right]
    &\lesssim
    \E \big[ \tilde \Sigma_\rd(x) \big]^{-1 - \zeta / 2}
    \sum_{i=1}^{n}
    \E \left[
      |S_i|^{2 + \zeta}
    \right] \\
    \label{eq:berry_esseen_proof}
    &\quad+
    \P \left(
      \Big| \tilde \Sigma_\rd(x) - \E\big[\tilde\Sigma_\rd(x)\big] \Big|
      > \frac{\E\big[\tilde\Sigma_\rd(x)\big]}{2}
    \right).
  \end{align}
  The first term in \eqref{eq:berry_esseen_proof} is bounded as follows.
  We already have from the proof of
  Lemma~\ref{lem:variance_debiased} that eventually
  $\E \big[ \tilde \Sigma_\rd(x) \big] \geq \Sigma_\rd(x) / 2 \gtrsim 1$.
  Since $\E[\varepsilon_i^{2 + \delta} \mid \bX]$
  is bounded, by Jensen's inequality,
  \begin{align*}
    \sum_{i=1}^{n}
    \E \left[
      |S_i|^{2 + \zeta}
    \right]
    &=
    \sum_{i=1}^{n}
    \E \left[
      \left|
      \sqrt{\frac{n}{\lambda^d}}
      \sum_{r=0}^{J}
      \omega_r
      \frac{1}{B} \sum_{b=1}^B
      \frac{\I_{i b r}(x) \varepsilon_i} {N_{b r}(x)}
      \right|^{2 + \zeta}
    \right] \\
    &\leq
    \left( \frac{n}{\lambda^d} \right)^{1 + \zeta/2}
    (J + 1)^{1 + \zeta}
    \sum_{r=0}^{J}
    |\omega_r|^3 \,
    \E \left[
      \sum_{i=1}^{n}
      \left(
        \frac{1}{B} \sum_{b=1}^B
        \frac{\I_{i b r}(x)} {N_{b r}(x)}
      \right)^{2 + \zeta}
    \right] \\
    &\lesssim
    \left( \frac{n}{\lambda^d} \right)^{1 + \zeta/2}
    \E \left[
      \sum_{i=1}^{n}
      \left(
        \frac{1}{B} \sum_{b=1}^B
        \frac{\I_{i b r}(x)} {N_{b r}(x)}
      \right)^{2 + \zeta}
    \right].
  \end{align*}
  We now proceed by cases. If $\zeta = 1$, note that
  by Lemma~\ref{lem:simple_moment_denominator} and
  with $B \gtrsim (\log n)^d$,
  \begin{align*}
    \E \left[
      \sum_{i=1}^{n}
      \left(
        \frac{1}{B} \sum_{b=1}^B
        \frac{\I_{i b r}(x)} {N_{b r}(x)}
      \right)^{3}
    \right]
    &=
    \E \left[
      \sum_{i=1}^{n}
      \frac{1}{B^3}
      \sum_{b=1}^B
      \sum_{b'=1}^B
      \sum_{b''=1}^B
      \frac{\I_{i b r}(x)} {N_{b r}(x)}
      \frac{\I_{i b' r'}(x)} {N_{b' r'}(x)}
      \frac{\I_{i b'' r''}(x)} {N_{b'' r''}(x)}
    \right] \\
    &\leq
    \frac{1}{B^2}
    \sum_{b=1}^B
    \sum_{b'=1}^B
    \E \left[
      \frac{\I_{b r}(x)} {N_{b r}(x)}
      \frac{\I_{b' r'}(x)} {N_{b' r'}(x)}
    \right]
    \lesssim
    \frac{\lambda^{2d}}{n^2}
    + \frac{1}{B}
    \frac{\lambda^{2d} (\log n)^d}{n^2}
    \lesssim
    \frac{\lambda^{2d}}{n^2}.
  \end{align*}
  Alternatively, if $\zeta \in (0, 1)$, by Jensen's inequality
  and Lemma~\ref{lem:simple_moment_denominator},
  \begin{align*}
    \E \left[
      \sum_{i=1}^{n}
      \left(
        \frac{1}{B} \sum_{b=1}^B
        \frac{\I_{i b r}(x)} {N_{b r}(x)}
      \right)^{2 + \zeta}
    \right]
    &\leq
    \E \left[
      \sum_{i=1}^{n}
      \left(
        \frac{\I_{i b r}(x)} {N_{b r}(x)}
      \right)^{2 + \zeta}
    \right]
    \leq
    \E \left[
      \sum_{i=1}^{n}
      \frac{\I_{i b r}(x)} {N_{b r}(x)}
      \left(
        \frac{\I_{b r}(x)} {N_{b r}(x)}
      \right)^{1 + \zeta}
    \right] \\
    &\leq
    \E \left[
      \left(
        \frac{\I_{b r}(x)} {N_{b r}(x)}
      \right)^{1 + \zeta}
    \right]
    \lesssim
    \left( \frac{\lambda^d}{n} \right)^{1 + \zeta}.
  \end{align*}
  Both cases lead to the conclusion that
  \begin{align*}
    \E \big[ \tilde \Sigma_\rd(x) \big]^{-1 - \zeta / 2}
    \sum_{i=1}^{n}
    \E \left[
      |S_i|^{2 + \zeta}
    \right]
    &\lesssim
    \left( \frac{n}{\lambda^d} \right)^{1 + \zeta/2}
    \left( \frac{\lambda^d}{n} \right)^{1 + \zeta}
    = \left( \frac{\lambda^d}{n} \right)^{\zeta / 2}.
  \end{align*}
  For the second term in \eqref{eq:berry_esseen_proof},
  Chebyshev's inequality along with the proof of
  Lemma~\ref{lem:variance_debiased} give
  \begin{align*}
    \P \left(
      \Big| \tilde \Sigma_\rd(x) - \E\big[\tilde\Sigma_\rd(x)\big] \Big|
      > \frac{\E\big[\tilde\Sigma_\rd(x)\big]}{2}
    \right)
    \lesssim
    \Var \left[ \tilde \Sigma_\rd(x) \right]
    \lesssim
    \frac{1}{B}
    + \frac{\lambda^d}{n}.
  \end{align*}
  Therefore
  \begin{align*}
    \sup_{t \in \R}
    \left|
    \P \left(
      \tilde \Sigma_\rd(x)^{-1/2}
      \sum_{i=1}^{n} S_i
      \leq t
    \right)
    - \Phi(t)
    \right|
    &\lesssim
    \left( \frac{\lambda^d}{n} \right)^{\frac{1 \wedge \delta}{2}}
    + \frac{1}{B}.
  \end{align*}
\end{proof}

\begin{proof}[Lemma~\ref{lem:variance_estimation_debiased}]

  We begin by showing that
  $\hat\sigma^2(x)$ is consistent for $\sigma^2(x)$.

  \proofparagraph{consistency of $\hat\sigma^2(x)$}

  Recall that
  \begin{align}
    \label{eq:sigma2_hat_proof}
    \hat\sigma^2(x)
    &=
    \frac{1}{B}
    \sum_{b=1}^{B}
    \sum_{i=1}^n
    \frac{Y_i^2 \, \I_{i b}(x)} {N_b(x)}
    - \hat \mu(x)^2.
  \end{align}
  The first term in \eqref{eq:sigma2_hat_proof}
  is simply a Mondrian forest estimator of
  $\E[Y_i^2 \mid X_i = x] = \sigma^2(x) + \mu(x)^2$,
  which is bounded and in $\cH^{\betamu \wedge \betasigma}$.
  Therefore, by Lemma~\ref{lem:bias}, its conditional bias is
  \begin{align*}
    & \E \left[
      \left(
        \frac{1}{B}
        \sum_{b=1}^{B}
        \sum_{i=1}^n
        \frac{\big(\sigma^2(X_i) + \mu(X_i)^2- \sigma^2(x) - \mu(x)^2\big)
        \, \I_{i b}(x)}
        {N_b(x)}
      \right)^2
    \right] \\
    &\quad\lesssim
    \frac{1}{\lambda^{2(2 \wedge \beta \wedge \betasigma)}}
    + \frac{1}
    {\lambda^{2(1 \wedge \beta \wedge \betasigma)} B}
    + \frac{1}{\lambda^{2(1 \wedge \beta \wedge \betasigma)}}
    \frac{\lambda^d}{n}.
  \end{align*}
  We handle the stochastic part with a truncation argument. Let
  $S_i = Y_i^2 - \sigma^2(X_i) - \mu(X_i)^2$ and
  $\tilde S_i = S_i \, \I\{|S_i| \leq M\}
  - \E \left[ S_i \, \I\{|S_i| \leq M\} \mid X_i \right]$
  where $M > 0$ is to be determined. We bound
  \begin{align}
    \label{eq:truncation}
    \frac{1}{B}
    \sum_{b=1}^{B}
    \sum_{i=1}^n
    \frac{\tilde S_i \, \I_{i b}(x)} {N_b(x)}
    + \frac{1}{B}
    \sum_{b=1}^{B}
    \sum_{i=1}^n
    \frac{(S_i - \tilde S_i) \, \I_{i b}(x)} {N_b(x)}.
  \end{align}
  The first term in \eqref{eq:truncation} is controlled with a variance bound,
  noting $\tilde S_i \leq 2M$ almost surely.
  \begin{align*}
    \Var \left[
      \frac{1}{B}
      \sum_{b=1}^{B}
      \sum_{i=1}^n
      \frac{\tilde S_i \, \I_{i b}(x)} {N_b(x)}
    \right]
    \leq
    \sum_{i=1}^n
    \E \left[
      \frac{ \tilde S_i^2 \, \I_{i b}(x)}
      {N_b(x)^2}
    \right]
    \leq
    4 M^2
    \sum_{i=1}^n
    \E \left[
      \frac{\I_{b}(x)} {N_b(x)}
    \right]
    \lesssim
    M^2 \frac{\lambda^d}{n}.
  \end{align*}
  For the second term in \eqref{eq:truncation}, note that
  $S_i - \tilde S_i = S_i \, \I\{|S_i| > M\}
  - \E \left[ S_i \, \I\{|S_i| > M\} \mid X_i \right]$
  because $\E[S_i \mid X_i] = 0$.
  Since $\E \left[ |Y_i|^{2 + \delta} \mid X_i \right]$ is bounded,
  so is $\E \left[ |S_i|^{1 + \delta/2} \mid X_i \right]$. Thus
  \begin{align*}
    \E \left[
      \left|
      \frac{1}{B}
      \sum_{b=1}^{B}
      \sum_{i=1}^n
      \frac{(S_i - \tilde S_i) \, \I_{i b}(x)} {N_b(x)}
      \right|
    \right]
    &\leq
    \sum_{i=1}^n
    \E \left[
      \frac{\I_{i b}(x)} {N_b(x)}
      \E \left[
        |S_i - \tilde S_i|
        \mid X_i
      \right]
    \right] \\
    &\leq
    2 \sum_{i=1}^n
    \E \left[
      \frac{\I_{i b}(x)} {N_b(x)}
      \E \big[ |S_i| \, \I\{|S_i| > M\} \mid X_i \big]
    \right] \\
    &\leq
    \frac{2}{M^{\delta / 2}}
    \sum_{i=1}^n
    \E \left[
      \frac{\I_{i b}(x)} {N_b(x)}
      \E \left[ |S_i|^{1+\delta/2} \mid X_i \right]
    \right]
    \lesssim
    \frac{1}{M^{\delta / 2}}.
  \end{align*}
  Consistency of the second term in \eqref{eq:sigma2_hat_proof}
  follows directly from
  Lemma~\ref{lem:bias} and Theorem~\ref{thm:clt_debiased}
  with the same bias and variance bounds.
  Therefore
  \begin{align*}
    \E \left[
      \big|\hat\sigma^2(x) - \sigma^2(x)\big|
    \right]
    &\lesssim
    \frac{1}{\lambda^{2 \wedge \beta \wedge \betasigma}}
    + \frac{1}{\lambda^{1 \wedge \beta \wedge \betasigma} \sqrt B}
    + \frac{1}{\lambda^{1 \wedge \beta \wedge \betasigma}}
    \sqrt{\frac{\lambda^d}{n}}
    + M \sqrt{\frac{\lambda^d}{n}}
    + \frac{1}{M^{\delta/2}} \\
    &\lesssim
    \frac{1}{\lambda^{2 \wedge \beta \wedge \betasigma}}
    + \frac{1}{\lambda^{1 \wedge \beta \wedge \betasigma} \sqrt B}
    + \left(
      \frac{\lambda^d}{n}
    \right)^{\frac{\delta}{4 + 2 \delta}},
  \end{align*}
  where we set
  $M = \left( \frac{\lambda^d}{n} \right)^{-\frac{1}{2+\delta}}$.
  Note that if $\delta \geq 2$ then the variance argument applies
  directly, without the need for truncation, yielding
  \begin{align*}
    \E \left[
      \big|\hat\sigma^2(x) - \sigma^2(x)\big|^2
    \right]
    &\lesssim
    \frac{1}{\lambda^{2(2 \wedge \beta \wedge \betasigma)}}
    + \frac{1}{\lambda^{2(1 \wedge \beta \wedge \betasigma)} B}
    + \frac{\lambda^d}{n}.
  \end{align*}
  \proofparagraph{consistency of the sum}
  Note that
  \begin{align*}
    \frac{n}{\lambda^d}
    \sum_{i=1}^n
    \left(
      \sum_{r=0}^J
      \omega_r
      \frac{1}{B}
      \sum_{b=1}^B
      \frac{\I\{X_i \in T_{b r}(x)\}}
      {\sum_{i=1}^n \I\{X_i \in T_{b r}(x)\}}
    \right)^2
    &=
    \frac{n}{\lambda^d}
    \frac{1}{B^2}
    \sum_{i=1}^n
    \sum_{r=0}^J
    \sum_{r'=0}^J
    \omega_r
    \omega_{r'}
    \sum_{b=1}^B
    \sum_{b'=1}^B
    \frac{\I_{i b r}(x) \I_{i b' r'}(x)}
    {N_{b r}(x) N_{b' r'}(x)}.
  \end{align*}
  This is exactly the same as $\tilde\Sigma_\rd(x)$,
  if we were to take $\sigma^2(x) = 1$.
  Thus by Lemma~\ref{lem:variance_debiased}, we obtain
  \begin{align*}
    &\E \left[
      \left(
        \frac{n}{\lambda^d}
        \frac{1}{B^2}
        \sum_{i=1}^n
        \sum_{r=0}^J
        \sum_{r'=0}^J
        \omega_r
        \omega_{r'}
        \sum_{b=1}^B
        \sum_{b'=1}^B
        \frac{\I_{i b r}(x) \I_{i b' r'}(x)}
        {N_{b r}(x) N_{b' r'}(x)}
        - \frac{\Sigma_\rd(x)}{\sigma^2(x)}
      \right)^2
    \right]
    \lesssim
    \frac{\lambda^d}{n}
    + \frac{1}{B}
    + \frac{1}{\lambda^{2(1 \wedge \betaf \wedge \betasigma)}}.
  \end{align*}

  \proofparagraph{conclusion}

  By the previous parts and the Cauchy--Schwarz inequality,
  \begin{align*}
    \E \left[
      \left|
      \hat\Sigma_\rd(x)
      - \Sigma_\rd(x)
      \right|^{1/2}
    \right]^2
    &\lesssim
    \left(
      \frac{\lambda^d}{n}
    \right)^{\frac{\delta}{4 + 2 \delta}}
    + \frac{1}{\sqrt B}
    + \frac{1}{\lambda^{1 \wedge \betamu \wedge \betaf \wedge \betasigma}}.
  \end{align*}
  If $\delta \geq 2$ then we obtain
  \begin{align*}
    \E \left[
      \left|
      \hat\Sigma_\rd(x)
      - \Sigma_\rd(x)
      \right|
    \right]
    &\lesssim
    \sqrt{\frac{\lambda^d}{n}}
    + \frac{1}{\sqrt B}
    + \frac{1}{\lambda^{1 \wedge \betamu \wedge \betaf \wedge \betasigma}}.
  \end{align*}
  Combining these yields
  \begin{align*}
    \E \left[
      \left|
      \hat\Sigma_\rd(x)
      - \Sigma_\rd(x)
      \right|^{\frac{2 - \I\{\delta < 2\}}{2}}
    \right]^{\frac{2}{2 - \I\{\delta < 2\}}}
    &\lesssim
    \left(\frac{\lambda^d}{n}\right)^{\frac{1}{2}
    - \frac{\I\{\delta < 2\}}{2 + \delta}}
    + \frac{1}{\sqrt B}
    + \frac{1}{\lambda^{1 \wedge \betamu \wedge \betaf \wedge \betasigma}}.
  \end{align*}
\end{proof}

\begin{proof}[Theorem~\ref{thm:confidence_debiased}]

  Let $\tau$ and $\hat \tau$ be real-valued random variables.
  Then for any $\varepsilon > 0$,
  \begin{align*}
    \sup_{t \in \R}
    \left| \P \big( \hat \tau \leq t \big) - \Phi(t) \right|
    &\leq
    \sup_{t \in \R}
    \left| \P \big( \tau \leq t \big) - \Phi(t) \right|
    + \varepsilon \sqrt{2/\pi}
    + \P \big( |\hat \tau - \tau| > \varepsilon \big).
  \end{align*}
  Defining $a / 0 = 0$ for all $a \in \R$
  to accommodate the event $\hat\Sigma_\rd(x) = 0$,
  we apply this result to
  \begin{align*}
    \hat\tau =
    \sqrt{\frac{n}{\lambda^d}}
    \left(
      \frac{\hat \mu_\rd(x) - \mu(x)}
      {\sqrt{\smash[b]{\hat \Sigma_\rd(x)}}}
      - \frac{\E\left[ \hat \mu_\rd(x) \right] - \mu(x)}
      {\sqrt{\Sigma_\rd(x)}}
    \right)
    &&\text{and}
    && \tau = \sqrt{\frac{n}{\lambda^d}}
    \frac{\hat \mu_\rd(x) - \E\left[ \hat \mu_\rd(x) \mid \bX, \bT \right]}
    {\sqrt{\smash[b]{\tilde \Sigma_\rd(x)}}}
  \end{align*}
  respectively, noting that
  $\sup_{t \in \R} \left| \P \big( \tau \leq t \big) - \Phi(t) \right|
  \lesssim \left( \frac{\lambda^d}{n} \right)^{\frac{1 \wedge \delta}{2}}
  + \frac{1}{B}$ by Theorem~\ref{thm:clt_debiased}.
  With
  \begin{align*}
    v &=
    \sqrt{\frac{n}{\lambda^d}}
    \frac{\E\left[ \hat \mu_\rd(x) \right] - \mu(x)}{\sqrt{\Sigma_\rd(x)}}
    \lesssim
    \sqrt{\frac{\lambda^d}{n}}
    \frac{1}{\lambda^{1 \wedge \betamu}}
    + \sqrt{\frac{n}{\lambda^d}}
    \frac{1}{\lambda^\beta}
  \end{align*}
  by the proof of Lemma~\ref{lem:bias},
  and by Taylor's theorem, for some $s, s' \in \R$, we have
  \begin{align*}
    &\left|
    \P \big( \mu(x) \in \CI_\rd(x) \big)
    - (1 - \alpha)
    \right| \\
    &\quad=
    \left|
    \P \left(
      q_{\alpha / 2}
      \leq
      \sqrt{\frac{n}{\lambda^d}}
      \frac{\hat \mu_\rd(x) - \mu(x)}
      {\sqrt{\smash[b]{\hat \Sigma_\rd(x)}}}
      \leq
      q_{1 - \alpha / 2}
    \right)
    - (1 - \alpha)
    \right| \\
    &\quad=
    \left|
    \P \left(
      q_{\alpha / 2} - v
      \leq
      \hat\tau
      \leq
      q_{1 - \alpha / 2} - v
    \right)
    - (1 - \alpha)
    \right| \\
    &\quad\leq
    \left|
    \P \left(
      \hat\tau
      \leq
      q_{1 - \alpha / 2} - v
    \right)
    - \Phi(q_{1 - \alpha / 2} - v)
    \right|
    + \left|
    \P \left(
      \hat\tau
      < q_{\alpha / 2} - v
    \right)
    - \Phi(q_{\alpha / 2} - v)
    \right| \\
    &\qquad+
    \left|
    \Phi(q_{1 - \alpha / 2} - v)
    - (1 - \alpha / 2)
    - \Phi(q_{\alpha / 2} - v)
    + \alpha / 2
    \right| \\
    &\quad\lesssim
    \left( \frac{\lambda^d}{n} \right)^{\frac{1 \wedge \delta}{2}}
    + \frac{1}{B}
    + \varepsilon + \P \left(|\hat\tau - \tau| > \varepsilon \right)
    + \left|
    - v \phi(1 - \alpha/2) + v^2 \phi'(s) / 2
    + v \phi(\alpha/2) - v^2 \phi'(s') / 2
    \right| \\
    &\quad\lesssim
    \left( \frac{\lambda^d}{n} \right)^{\frac{1 \wedge \delta}{2}}
    + \frac{1}{B} + \varepsilon
    + \frac{n}{\lambda^d} \frac{1}{\lambda^{2\beta}}
    + \P \left(|\hat\tau - \tau| > \varepsilon \right).
  \end{align*}
  It remains to bound $\P \left(|\hat\tau - \tau| > \varepsilon \right)$.
  Observe that
  \begin{align*}
    |\hat\tau - \tau|
    &\leq
    R_1 + R_2 + R_3, \\
    R_1 &=
    \sqrt{\frac{n}{\lambda^d}}
    \big| \hat \mu_\rd(x) - \mu(x) \big|
    \left|
    \frac{1}{\sqrt{\smash[b]{\hat \Sigma_\rd(x)}}}
    - \frac{1}{\sqrt{\smash[b]{\tilde \Sigma_\rd(x)}}}
    \right|, \\
    R_2 &=
    \sqrt{\frac{n}{\lambda^d}}
    \big| \E\left[ \hat \mu_\rd(x) \mid \bX, \bT \right] - \mu(x) \big|
    \left|
    \frac{1}{\sqrt{\smash[b]{\tilde \Sigma_\rd(x)}}}
    - \frac{1}{\sqrt{{\Sigma_\rd(x)}}}
    \right|, \\
    R_3 &=
    \sqrt{\frac{n}{\lambda^d}}
    \frac{ \big| \E\left[ \hat \mu_\rd(x) \mid \bX, \bT \right]
    - \E\left[ \hat \mu_\rd(x) \right]\big|}
    {\sqrt{\smash[b]{\Sigma_\rd(x)}}}.
  \end{align*}
  We begin with $R_1$.
  Take $a > 1$ and $b^{2/3} = \Sigma_\rd(x)/2$, so
  by the proof of Lemma~\ref{lem:variance_debiased},
  \begin{align*}
    \P \left( R_1 > \varepsilon \right)
    &\leq
    \P \left(
      \sqrt{\frac{n}{\lambda^d}}
      \big|
      \hat \mu_\rd(x) - \mu(x)
      \big|
      > a \varepsilon
    \right)
    + \P \left(
      \left|
      \frac{1}{\sqrt{\smash[b]{\hat \Sigma_\rd(x)}}}
      - \frac{1}{\sqrt{\smash[b]{\tilde \Sigma_\rd(x)}}}
      \right|
      > \frac{1}{a}
    \right) \\
    &\leq
    \frac{n}{a^2 \varepsilon^2 \lambda^d}
    \E \big[
      (\hat \mu_\rd(x) - \mu(x))^2
    \big]
    + \P \left( \tilde \Sigma_\rd(x) < b^{2/3} \right)
    + \P \left( \hat \Sigma_\rd(x) < b^{2/3} \right) \\
    &\quad+
    \P \left(
      \left|
      \hat \Sigma_\rd(x) - \tilde \Sigma_\rd(x)
      \right|
      > \frac{b}{a}
    \right) \\
    &\lesssim
    \frac{n}{a^2 \varepsilon^2 \lambda^d}
    \left( \frac{\lambda^d}{n} + \frac{1}{\lambda^{2 \beta}}
    + \frac{1}{\lambda^{2(1 \wedge \beta)} B}\right)
    + \Var \left[ \tilde \Sigma_\rd(x) \right]
    + \P \left( |\hat \Sigma_\rd(x) - \Sigma_\rd(x)|
    > \Sigma_\rd(x)/2 \right) \\
    &\quad+ \P \left(
      \left|
      \tilde \Sigma_\rd(x) - \Sigma_\rd(x)
      \right|
      > \frac{b}{2a}
    \right)
    + \P \left(
      \left|
      \hat \Sigma_\rd(x) - \Sigma_\rd(x)
      \right|
      > \frac{b}{2a}
    \right) \\
    &\lesssim
    \frac{1}{a^2 \varepsilon^2}
    + \frac{1}{B} + \frac{\lambda^d}{n}
    + a^2
    \left(
      \frac{\lambda^d}{n} + \frac{1}{B}
      + \frac{1}{\lambda^{2(1 \wedge \betaf \wedge \betasigma)}}
    \right)
    + \P \left(
      \left|
      \hat \Sigma_\rd(x) - \Sigma_\rd(x)
      \right|
      > \frac{b}{2a}
    \right).
  \end{align*}
  If $\delta < 2$ then the proof of
  Lemma~\ref{lem:variance_estimation_debiased}
  along with Markov's inequality gives
  \begin{align*}
    \P \left(
      \left|
      \hat \Sigma_\rd(x) - \Sigma_\rd(x)
      \right|
      > \frac{b}{2a}
    \right)
    &\lesssim
    \sqrt{a} \,
    \E \left[
      \left|
      \hat\Sigma_\rd(x)
      - \Sigma_\rd(x)
      \right|^{1/2}
    \right] \\
    &\lesssim
    \sqrt a
    \left(
      \left(
        \frac{\lambda^d}{n}
      \right)^{\frac{\delta}{8 + 4 \delta}}
      + \frac{1}{B^{1/4}}
      + \frac{1}{\lambda^{(1 \wedge \betamu \wedge \betaf \wedge \betasigma)/2}}
    \right),
  \end{align*}
  and whenever this converges to zero, minimizing over $a$ yields
  \begin{align*}
    \P \left( R_1 > \varepsilon \right)
    &\lesssim
    \frac{1}{a^2 \varepsilon^2}
    + \frac{1}{B} + \frac{\lambda^d}{n}
    + \sqrt a
    \left(
      \left(
        \frac{\lambda^d}{n}
      \right)^{\frac{\delta}{8 + 4 \delta}}
      + \frac{1}{B^{1/4}}
      + \frac{1}{\lambda^{(1 \wedge \betamu \wedge \betaf \wedge \betasigma)/2}}
    \right) \\
    &\lesssim
    \frac{1}{\varepsilon^{2/5}}
    \left(
      \left(
        \frac{\lambda^d}{n}
      \right)^{\frac{\delta}{10 + 5 \delta}}
      + \frac{1}{B^{1/5}}
      + \frac{1}{\lambda^{2(1 \wedge \betamu \wedge \betaf \wedge
      \betasigma)/5}}
    \right).
  \end{align*}
  If $\delta \geq 2$ however then we instead obtain
  \begin{align*}
    \P \left(
      \left|
      \hat \Sigma_\rd(x) - \Sigma_\rd(x)
      \right|
      > \frac{b}{2a}
    \right)
    &\lesssim
    a \,
    \E \left[
      \left|
      \hat\Sigma_\rd(x)
      - \Sigma_\rd(x)
      \right|
    \right]
    \lesssim
    a \left(
      \sqrt{\frac{\lambda^d}{n}}
      + \frac{1}{\sqrt B}
      + \frac{1}
      {\lambda^{1 \wedge \betamu \wedge \betaf \wedge \betasigma}}
    \right),
  \end{align*}
  and again if this converges to zero then minimizing over $a$ gives
  \begin{align*}
    \P \left( R_1 > \varepsilon \right)
    &\lesssim
    \frac{1}{a^2 \varepsilon^2}
    + \frac{1}{B} + \frac{\lambda^d}{n}
    + a \left(
      \sqrt{\frac{\lambda^d}{n}}
      + \frac{1}{\sqrt B}
      + \frac{1}
      {\lambda^{1 \wedge \betamu \wedge \betaf \wedge \betasigma}}
    \right) \\
    &\lesssim
    \frac{1}{\varepsilon^{2/3}}
    \left(
      \left(\frac{\lambda^d}{n}\right)^{1/3}
      + \frac{1}{B^{1/3}}
      + \frac{1}
      {\lambda^{2(1 \wedge \betamu \wedge \betaf \wedge \betasigma)/3}}
    \right).
  \end{align*}
  For $R_2$, note that the same arguments used for $R_1$ apply again,
  yielding a bound no worse than that for $R_1$.
  Finally, for $R_3$, we have by Lemma~\ref{lem:bias_debiased} that
  \begin{align*}
    \P \left( R_3 > \varepsilon \right)
    &\leq
    \frac{1}{\varepsilon^2}
    \E \left[
      R_3^2
    \right]
    \lesssim
    \frac{1}{\varepsilon^2}
    \frac{n}{\lambda^d}
    \E \left[
      \big(
        \E\left[ \hat \mu_\rd(x) \mid \bX, \bT \right]
        - \E\left[ \hat \mu_\rd(x) \right]
      \big)^2
    \right] \\
    &\lesssim
    \frac{1}{\varepsilon^2}
    \frac{n}{\lambda^d}
    \left(
      \frac{1}{\lambda^{2(1 \wedge \betamu)} B}
      + \frac{1}{\lambda^{2(1 \wedge \betamu)}}
      \frac{\lambda^d}{n}
    \right)
    \lesssim
    \frac{1}{\varepsilon^2}
    \left(
      \frac{n}{\lambda^d}
      \frac{1}{\lambda^{2(1 \wedge \betamu)} B}
      + \frac{1}{\lambda^{2(1 \wedge \betamu)}}
    \right).
  \end{align*}
  So far we have shown that if $\varepsilon \to 0$,
  then for $\delta < 2$,
  \begin{align*}
    &\left| \P \big( \mu(x) \in \CI_\rd(x) \big) - (1 - \alpha) \right|
    \lesssim
    \left( \frac{\lambda^d}{n} \right)^{\frac{1 \wedge \delta}{2}}
    + \frac{1}{B} + \varepsilon
    + \frac{n}{\lambda^d} \frac{1}{\lambda^{2\beta}}
    + \P \left(|\hat\tau - \tau| > \varepsilon \right) \\
    &\quad\lesssim
    \varepsilon
    + \frac{n}{\lambda^d} \frac{1}{\lambda^{2\beta}}
    + \frac{1}{\varepsilon^{2/5}}
    \Bigg(
      \left(
        \frac{\lambda^d}{n}
      \right)^{\frac{\delta}{10 + 5 \delta}}
      + \frac{1}{B^{1/5}}
      + \frac{1}
      {\lambda^{2(1 \wedge \betamu \wedge \betaf \wedge \betasigma)/5}}
      + \left(
        \frac{n}{\lambda^d}
      \right)^{1/5}
      \frac{1}{\lambda^{2(1 \wedge \betamu)/5} B^{1/5}}
    \Bigg) \\
    &\quad\lesssim
    \frac{n}{\lambda^d} \frac{1}{\lambda^{2\beta}}
    + \left(
      \left(
        \frac{\lambda^d}{n}
      \right)^{\frac{\delta}{2 + \delta}}
      + \frac{1}{B}
      + \frac{1}
      {\lambda^{2(1 \wedge \betamu \wedge \betaf \wedge \betasigma)}}
      + \frac{n}{\lambda^d}
      \frac{1}{\lambda^{2(1 \wedge \beta)} B}
    \right)^{1/7},
  \end{align*}
  while for $\delta \geq 2$,
  \begin{align*}
    &\left| \P \big( \mu(x) \in \CI_\rd(x) \big) - (1 - \alpha) \right|
    \lesssim
    \left( \frac{\lambda^d}{n} \right)^{\frac{1 \wedge \delta}{2}}
    + \frac{1}{B} + \varepsilon
    + \frac{n}{\lambda^d} \frac{1}{\lambda^{2\beta}}
    + \P \left(|\hat\tau - \tau| > \varepsilon \right) \\
    &\quad\lesssim
    \varepsilon
    + \frac{n}{\lambda^d} \frac{1}{\lambda^{2\beta}}
    + \frac{1}{\varepsilon^{2/3}}
    \left(
      \left(\frac{\lambda^d}{n}\right)^{1/3}
      + \frac{1}{B^{1/3}}
      + \frac{1}
      {\lambda^{2(1 \wedge \betamu \wedge \betaf \wedge \betasigma)/3}}
      + \left(
        \frac{n}{\lambda^d}
      \right)^{1/3}
      \frac{1}{\lambda^{2(1 \wedge \betamu)/3} B^{1/3}}
    \right) \\
    &\quad\lesssim
    \frac{n}{\lambda^d} \frac{1}{\lambda^{2\beta}}
    + \left(
      \frac{\lambda^d}{n}
      + \frac{1}{B}
      + \frac{1}
      {\lambda^{2(1 \wedge \betamu \wedge \betaf \wedge \betasigma)}}
      + \frac{n}{\lambda^d}
      \frac{1}{\lambda^{2(1 \wedge \betamu)} B}
    \right)^{1/5},
  \end{align*}
  where in both displays we minimized over $\varepsilon > 0$.
\end{proof}

\subsection{Proofs for Section~\ref{sec:compute}}
\label{sec:proofs_compute}

\begin{proof}[Lemma~\ref{lem:compute_batch}]

  All computational complexities in this proof are understood to be upper bounds
  up to constants.
  The first step is to select $\lambda$ using polynomial fitting
  as in Section~\ref{sec:parameter_selection}.
  Constructing the design matrix $\bP$
  requires raising a number to a power of at most $J+1$
  a total of $n d (J+1)$ times, giving a complexity of $n d (J+1)^2$.
  Multiplying the design matrix to obtain $\bP^\T \bP$ is
  $n d^2 (J + 1)^2$, and inverting this is
  $d^3 (J + 1)^3 \lesssim n d^2 (J + 1)^2$,
  giving an overall complexity of $n d^2 (J + 1)^2$ for selecting the lifetime.

  Calculating the debiasing coefficients $\omega_r$ as in
  Section~\ref{sec:debiased}
  involves inverting a $(J+1) \times (J+1)$ matrix, so is
  $(J+1)^3 \lesssim n (J+1)^2$.
  Next, constructing $U(x)$ as in \eqref{eq:compute_local}
  requires $B d (J+1)$ comparisons,
  and forming $I(x)$ then needs $n d$ comparisons.

  Once $I(x)$ is available,
  Calculating $N_{b r}(x)$, $S_{b r}(x)$ and $V_{b r}(x)$
  as in \eqref{eq:compute_sums}
  each take $B d (J+1) |I(x)|$ operations,
  and from these we compute
  $\hat\mu(x)$ and $\hat\sigma^2(x)$ in $(J+1) B$ using
  \eqref{eq:compute_mu_sigma2}.
  Constructing $\hat\Sigma_\rd(x)$ as in \eqref{eq:compute_Sigma_hat}
  is $B d (J+1) |I(x)|$, and calculating $\CI_\rd(x)$ with \eqref{eq:CI_d}
  has complexity $1$.

  Thus the overall complexity of Algorithm~\ref{alg:batch} is
  $n d^2 (J+1)^2 + B d (J+1) + B d (J+1) |I(x)|$.
  To obtain the average case behavior we present a bound for
  $\E \left[ |I(x)| \right]$. Firstly, since $f(x)$ is bounded
  and by the distribution of Mondrian cells,
  \begin{align*}
    \E \left[ |I(x)| \right]
    &=
    \E \left[
      \sum_{i=1}^{n} \I\{X_i \in U(x)\}
    \right]
    =
    \sum_{i=1}^{n}
    \E \left[
      \P\big(X_i \in U(x) \mid U(x)\big)
    \right]
    \lesssim
    n \, \E \left[
      |U(x)|
    \right]
    \lesssim
    n \, \E \left[
      |U(x)_j|
    \right]^d.
  \end{align*}
  Next, by Lemma~\ref{lem:largest_cell}, we have that
  \begin{align*}
    \P \left(
      |U(x)_j| \geq \frac{4 t + 4 \log (2 B (J+1))}{\lambda}
    \right)
    &\leq
    \P \left(
      \max_{0 \leq r \leq J}
      \max_{1 \leq b \leq B}
      |T_b(x)_j|
      \geq \frac{2 t + 2 \log (2 B (J+1))}{\lambda}
    \right)
    \leq
    e^{-t},
  \end{align*}
  and integrating the tail probability yields
  \begin{align*}
    \E \left[
      |U(x)_j|
    \right]
    &\lesssim \frac{\log (2 B (J+1))}{\lambda},
    &&\text{so that}
    &\E \left[ |I(x)| \right]
    &\lesssim
    \frac{n \log (2 B (J+1))^d}{\lambda^d}.
  \end{align*}
\end{proof}

\begin{proof}[Lemma~\ref{lem:compute_online}]

  As in the proof of Lemma~\ref{lem:compute_batch},
  the complexity of selecting the lifetime is $(n + k) d^2 (J + 1)^2$.
  Since this occurs with probability at most $k / K$, and $k \leq n$,
  the average case time complexity is $\frac{k n d^2 (J + 1)^2}{K}$.

  To update the trees, we sample and perform comparisons with at most
  $B^* d (J+1) \lesssim B d (J+1)$ exponential random variables.
  We verify here that the resulting trees have the correct distribution,
  since by \citet[Proposition~1]{mourtada2020minimax} and the memoryless
  property of the exponential distribution, with
  $E_{b r j 1}'$ and $E_{b r j 1}'$ i.i.d.\ copies of $E_{b r j 1}$,
  \begin{align*}
    T_{b r}^*(x)_j^-
    &= T_{b r}(x)_j^- \vee
    \left(x_j - \frac{E_{b r j 1}}{\lambda^* - \lambda}\right)
    = 0 \vee
    \bigg(x_j - \frac{E_{b r j 1}'}{\lambda}\bigg)
    \vee
    \bigg(x_j - \frac{E_{b r j 1}}{\lambda^* - \lambda}\bigg) \\
    &= x_j - \left(
      x_j \wedge
      \frac{E_{b r j 1}'}{\lambda}
      \wedge
      \frac{E_{b r j 1}}{\lambda^* - \lambda}
    \right)
    = x_j - \left(
      x_j \wedge
      \frac{E_{b r j 1}''}{\lambda^*}
    \right),
  \end{align*}
  as required. The same argument applies to $T_{b r}^*(x)_j^+$. We also bound
  the expected number of trees which have changed. By a union bound and with
  $E_1$ and $E_2$ i.i.d.\ $\Exp(1)$,
  \begin{align*}
    &\E \left[
      \sum_{r=0}^{J}
      \sum_{b=1}^{B}
      \I \big\{ T_{b r}^*(x) \neq T_{b r}(x) \big\}
    \right]
    = B (J+1) \, \P \big(T_{b r}^*(x) \neq T_{b r}(x)\big) \\
    &\quad\leq 2 d B (J+1) \,
    \P \left( \frac{E_1}{\lambda^* - \lambda} < \frac{E_2}{\lambda} \right)
    \leq 2 d B (J+1) \frac{\lambda^* - \lambda}{\lambda^*}
    \lesssim \frac{B (J+1) k}{n},
  \end{align*}
  since $\frac{\lambda^* - \lambda}{\lambda}
  = \left( \frac{n+k}{n} \right)^\zeta - 1
  \leq \frac{k \zeta}{n} \leq \frac{k}{n d}$.
  Constructing $U(x)$ requires $B d (J+1)$ comparisons, and since
  for $b \leq B$ we have $T_{b r}^*(x) \subseteq T_{b r}(x)$,
  \begin{align*}
    \P \left( U^*(x) \nsubseteq U(x) \right)
    &\leq
    d \, \P \left( U^*(x)_j^- < U(x)_j^- \right)
    + d \, \P \left( U^*(x)_j^+ > U(x)_j^+ \right) \\
    &\leq
    2 d \, \P \left(
      \max_{B+1 \leq b \leq B^*} \max_{0 \leq r \leq J} T_{b r}^*(x)_j^+
      > \max_{1 \leq b \leq B} \max_{0 \leq r \leq J} T_{b r}^*(x)_j^+
    \right) \\
    &\leq
    2 (B^* - B) d (J+1) \, \P \left(
      T_{b r}^*(x)_j^+
      > \max_{1 \leq b \leq B} \max_{0 \leq r \leq J} T_{b r}^*(x)_j^+
    \right) \\
    &\lesssim \frac{(B^* - B) d (J+1)}{B}
    \lesssim \frac{k d (J+1)}{n},
  \end{align*}
  since $\frac{B^* - B}{B} \leq \left( \frac{n+k}{n} \right)^\xi - 1
  \leq \frac{k}{n}$.
  The average case complexity of calculating $I^*(x)$ is therefore
  \begin{align*}
    &\E \left[
      d |I(x)| + d k + d n \I\{U^*(x) \nsubseteq U(x)\}
    \right] \\
    &\quad\leq
    d \, \E \left[ |I(x)| \right] + d k
    + d n \, \P \left( U^*(x) \nsubseteq U(x) \right)
    \lesssim
    \frac{n d \log(2 B (J+1))^d}{\lambda^d}
    + k d^2 (J+1).
  \end{align*}
  A similar calculation shows the cost of calculating all of
  $N_{b r}^*(x)$, $S_{b r}^*(x)$, and $V_{b r}^*(x)$ is at most
  \begin{align*}
    &\sum_{b=1}^{B}
    \sum_{r=0}^{J}
    \E \left[
      \frac{d k}{n} |I^*(x)| + d |I^*(x)| \, \I\{ T_{b r}(x) \neq T^*_{b r}(x)\}
    \right]
    + \sum_{b=B+1}^{B^*}
    \sum_{r=0}^{J}
    \E \left[
      d |I^*(x)|
    \right] \\
    &\quad\lesssim
    \E \left[
      \frac{B (J+1) d k}{n} |I^*(x)|
      + d |I^*(x)|
      \sum_{b=1}^{B}
      \sum_{r=0}^{J}
      \I\{ T_{b r}(x) \neq T^*_{b r}(x)\}
      + d (B^* - B) (J+1)
      |I^*(x)|
    \right] \\
    &\quad\lesssim
    \frac{B k d (J+1) \log (2 B (J+1))^d}{\lambda^d}.
  \end{align*}
  where we used that the bounds for $|I(x)|$ and
  $\sum_{r=0}^{J} \sum_{b=1}^{B} \I \big\{ T_{b r}^*(x) \neq T_{b r}(x) \big\}$
  hold also in $L^2$ and applied the Cauchy--Schwarz inequality.

  Finally, updating $\hat\Sigma_\rd(x)$ takes
  $B d (J+1) |I(x)|$ computations,
  which is done with probability at most $k/K$,
  yielding a time complexity of
  $\frac{n B d (J+1) \log (2 B (J+1))^d}{K \lambda^d}$ on average.
  The overall average case time complexity is therefore bounded by
  \begin{align*}
    d (J + 1)
    \left( \frac{k n d (J + 1)}{K} + k d + B \right)
    + \frac{d (J+1) \log (2 B (J+1))^d}{\lambda^d}
    \left( n + B k + \frac{n B}{K} \right).
  \end{align*}
\end{proof}

\section{Additional empirical results}
\iftoggle{journal}{}{\label{sec:additional_empirical}}

Tables~\ref{tab:d1_n1000_B1},~\ref{tab:d2_n1000_B1},
\ref{tab:d1_n1000_B10} and~\ref{tab:d2_n1000_B10} present some additional
empirical results not given in the main paper. The data generating process is
identical to that in Section~\ref{sec:implementation}, and we demonstrate here
the effect of a smaller forest size $B$,
taking $B=1$ in Tables~\ref{tab:d1_n1000_B1} and~\ref{tab:d2_n1000_B1},
and $B=10$ in Tables~\ref{tab:d1_n1000_B10} and~\ref{tab:d2_n1000_B10}.
Note that the bias in
Table~\ref{tab:d1_n1000_B1} is not significantly larger than that in
Table~\ref{tab:d1_n1000_B800}, even though Lemma~\ref{lem:bias} suggests that
the bias should be much greater. This is because Lemma~\ref{lem:bias} is stated
for the \emph{conditional} bias. In fact, the repeated experiments
(we use $3000$ independent trials) have the
same effect as using a large forest in reducing the apparent bias of the
estimator. As such, the error incurred appears in the standard deviation column
instead; indeed the standard deviations in Table~\ref{tab:d1_n1000_B1} are
substantially higher than those in Table~\ref{tab:d1_n1000_B800}.

\begin{table}[p]
  \begin{center}
    \begin{scriptsize}
\begin{tabular}{|c|cc|cc|cccc|cc|ccc|cc|}
  \hline
  & $J$ & LS & LM & $\lambda$ & RMSE & Bias & SD & Bias/SD &
  $\SDhattab$ & $\hat\sigma^2$ & ARMSE &
  ABias & ASD & CR & CIW \\
  \hline
  \multirow{6}{*}{\rotatebox{90}{No debiasing}}&0& $\hat\lambda_{0}$& 1.0&
  14.72& 0.0606& -0.0235& 0.0558& 0.4211& 0.0322& 0.0907& 0.0358& -0.0259&
  0.0232& 77.1\%& 0.126\\
  && $\lambda_{0}$& 1.2& 23.10& 0.0526& -0.0093& 0.0518& 0.1792& 0.0397&
  0.0883& 0.0306& -0.0092& 0.0292& 88.9\%& 0.156\\
  &&& 1.1& 21.18& 0.0565& -0.0102& 0.0555& 0.1828& 0.0381& 0.0886& 0.0300&
  -0.0110& 0.0279& 87.2\%& 0.149\\
  &&& 1.0& 19.25& 0.0551& -0.0150& 0.0530& 0.2834& 0.0361& 0.0890& 0.0298&
  -0.0133& 0.0266& 84.9\%& 0.142\\
  &&& 0.9& 17.33& 0.0504& -0.0157& 0.0479& 0.3268& 0.0345& 0.0895& 0.0301&
  -0.0164& 0.0253& 83.3\%& 0.135\\
  &&& 0.8& 15.40& 0.0568& -0.0202& 0.0530& 0.3814& 0.0323& 0.0902& 0.0316&
  -0.0208& 0.0238& 79.2\%& 0.127\\
  \hline
  \multirow{6}{*}{\rotatebox{90}{Debiasing}}&1& $\hat\lambda_{1}$& 1.0& 11.20&
  0.1150& -0.0058& 0.1149& 0.0508& 0.0633& 0.1184& 0.0302& -0.0031& 0.0287&
  83.0\%& 0.248\\
  && $\lambda_{1}$& 1.2& 7.86& 0.1274& -0.0046& 0.1274& 0.0362& 0.0568& 0.1389&
  0.0245& -0.0038& 0.0242& 71.7\%& 0.223\\
  &&& 1.1& 7.21& 0.1360& -0.0097& 0.1357& 0.0719& 0.0546& 0.1471& 0.0238&
  -0.0053& 0.0232& 66.1\%& 0.214\\
  &&& 1.0& 6.55& 0.1465& -0.0147& 0.1457& 0.1008& 0.0531& 0.1551& 0.0235&
  -0.0078& 0.0221& 63.5\%& 0.208\\
  &&& 0.9& 5.90& 0.1595& -0.0150& 0.1587& 0.0946& 0.0543& 0.1692& 0.0241&
  -0.0119& 0.0210& 60.1\%& 0.213\\
  &&& 0.8& 5.24& 0.1799& -0.0256& 0.1781& 0.1439& 0.0531& 0.1862& 0.0275&
  -0.0191& 0.0198& 54.0\%& 0.208\\
  \hline
  \multirow{6}{*}{\rotatebox{90}{Robust BC}}&1& $\hat\lambda_{0}$& 1.0& 14.72&
  0.1156& -0.0004& 0.1156& 0.0036& 0.0702& 0.1111& 0.0332& -0.0006& 0.0330&
  90.1\%& 0.275\\
  && $\lambda_{0}$& 1.2& 23.10& 0.1187& -0.0042& 0.1186& 0.0353& 0.0855&
  0.1044& 0.0415& -0.0001& 0.0415& 96.2\%& 0.335\\
  &&& 1.1& 21.18& 0.1093& -0.0011& 0.1093& 0.0101& 0.0842& 0.1025& 0.0398&
  -0.0001& 0.0398& 95.5\%& 0.330\\
  &&& 1.0& 19.25& 0.1006& -0.0026& 0.1005& 0.0259& 0.0808& 0.1020& 0.0379&
  -0.0001& 0.0379& 94.4\%& 0.317\\
  &&& 0.9& 17.33& 0.0873& 0.0007& 0.0873& 0.0077& 0.0751& 0.1012& 0.0360&
  -0.0002& 0.0360& 93.4\%& 0.294\\
  &&& 0.8& 15.40& 0.1032& -0.0018& 0.1032& 0.0174& 0.0710& 0.1050& 0.0339&
  -0.0003& 0.0339& 92.3\%& 0.278\\
  \hline
\end{tabular}
     \end{scriptsize}
  \end{center}
  \caption{Simulation results with
  $d=1$, $n=1000$, and $B=1$, over $3000$ repeats}
  \label{tab:d1_n1000_B1}
\end{table}

\begin{table}[p]
  \begin{center}
    \begin{scriptsize}
\begin{tabular}{|c|cc|cc|cccc|cc|ccc|cc|}
  \hline
  & $J$ & LS & LM & $\lambda$ & RMSE & Bias & SD & Bias/SD &
  $\SDhattab$ & $\hat\sigma^2$ & ARMSE &
  ABias & ASD & CR & CIW \\
  \hline
  \multirow{6}{*}{\rotatebox{90}{No debiasing}}&0& $\hat\lambda_{0}$& 1.0&
  12.32& 0.3192& -0.1082& 0.3003& 0.3603& 0.0709& 0.0846& 0.0831& -0.0670&
  0.0478& 64.2\%& 0.278\\
  && $\lambda_{0}$& 1.2& 18.39& 0.5522& -0.1706& 0.5252& 0.3248& 0.0788&
  0.0658& 0.0771& -0.0292& 0.0714& 67.1\%& 0.309\\
  &&& 1.1& 16.85& 0.4737& -0.1344& 0.4542& 0.2960& 0.0780& 0.0707& 0.0741&
  -0.0347& 0.0654& 70.3\%& 0.306\\
  &&& 1.0& 15.32& 0.4220& -0.1178& 0.4052& 0.2908& 0.0764& 0.0743& 0.0728&
  -0.0420& 0.0595& 69.6\%& 0.299\\
  &&& 0.9& 13.79& 0.3473& -0.1027& 0.3318& 0.3096& 0.0746& 0.0796& 0.0746&
  -0.0519& 0.0535& 70.4\%& 0.293\\
  &&& 0.8& 12.26& 0.3137& -0.1021& 0.2966& 0.3442& 0.0702& 0.0835& 0.0811&
  -0.0657& 0.0476& 65.5\%& 0.275\\
  \hline
  \multirow{6}{*}{\rotatebox{90}{Debiasing}}&1& $\hat\lambda_{1}$& 1.0& 9.22&
  0.7451& -0.1351& 0.7328& 0.1843& 0.2742& 0.6519& 0.0731& -0.0093& 0.0692&
  85.5\%& 1.075\\
  && $\lambda_{1}$& 1.2& 7.18& 0.5078& -0.0781& 0.5018& 0.1556& 0.2272& 0.4124&
  0.0550& -0.0108& 0.0540& 81.7\%& 0.891\\
  &&& 1.1& 6.58& 0.4920& -0.0686& 0.4872& 0.1408& 0.2227& 0.4197& 0.0518&
  -0.0154& 0.0495& 79.7\%& 0.873\\
  &&& 1.0& 5.99& 0.4391& -0.0700& 0.4335& 0.1616& 0.2083& 0.3817& 0.0503&
  -0.0225& 0.0450& 74.9\%& 0.816\\
  &&& 0.9& 5.39& 0.4185& -0.0725& 0.4121& 0.1759& 0.1954& 0.3862& 0.0530&
  -0.0343& 0.0405& 72.5\%& 0.766\\
  &&& 0.8& 4.79& 0.3645& -0.0789& 0.3559& 0.2217& 0.1766& 0.3631& 0.0656&
  -0.0549& 0.0360& 68.5\%& 0.692\\
  \hline
  \multirow{6}{*}{\rotatebox{90}{Robust BC}}&1& $\hat\lambda_{0}$& 1.0& 12.32&
  0.9612& -0.1957& 0.9411& 0.2079& 0.3495& 0.9536& 0.0926& -0.0014& 0.0926&
  88.3\%& 1.370\\
  && $\lambda_{0}$& 1.2& 18.39& 1.3984& -0.4142& 1.3357& 0.3101& 0.5041&
  1.8378& 0.1381& -0.0003& 0.1381& 80.1\%& 1.976\\
  &&& 1.1& 16.85& 1.2900& -0.3571& 1.2396& 0.2881& 0.4627& 1.5834& 0.1266&
  -0.0004& 0.1266& 83.3\%& 1.814\\
  &&& 1.0& 15.32& 1.1782& -0.3052& 1.1379& 0.2682& 0.4299& 1.3422& 0.1151&
  -0.0005& 0.1151& 85.4\%& 1.685\\
  &&& 0.9& 13.79& 1.0995& -0.2733& 1.0650& 0.2567& 0.3901& 1.2125& 0.1036&
  -0.0008& 0.1036& 87.7\%& 1.529\\
  &&& 0.8& 12.26& 0.9288& -0.1874& 0.9097& 0.2060& 0.3361& 0.8958& 0.0921&
  -0.0013& 0.0921& 89.3\%& 1.318\\
  \hline
\end{tabular}
     \end{scriptsize}
  \end{center}
  \caption{Simulation results with
  $d=2$, $n=1000$, and $B=1$, over $3000$ repeats}
  \label{tab:d2_n1000_B1}
\end{table}

\begin{table}[p]
  \begin{center}
    \begin{scriptsize}
\begin{tabular}{|c|cc|cc|cccc|cc|ccc|cc|}
  \hline
  & $J$ & LS & LM & $\lambda$ & RMSE & Bias & SD & Bias/SD &
  $\SDhattab$ & $\hat\sigma^2$ & ARMSE &
  ABias & ASD & CR & CIW \\
  \hline
  \multirow{6}{*}{\rotatebox{90}{No debiasing}}&0& $\hat\lambda_{0}$& 1.0&
  14.72& 0.0376& -0.0241& 0.0289& 0.8324& 0.0250& 0.0931& 0.0363& -0.0263&
  0.0232& 79.9\%& 0.098\\
  && $\lambda_{0}$& 1.2& 23.10& 0.0325& -0.0089& 0.0313& 0.2837& 0.0310&
  0.0895& 0.0306& -0.0092& 0.0292& 93.8\%& 0.122\\
  &&& 1.1& 21.18& 0.0317& -0.0104& 0.0300& 0.3462& 0.0296& 0.0898& 0.0300&
  -0.0110& 0.0279& 92.6\%& 0.116\\
  &&& 1.0& 19.25& 0.0320& -0.0126& 0.0293& 0.4306& 0.0285& 0.0903& 0.0298&
  -0.0133& 0.0266& 91.5\%& 0.112\\
  &&& 0.9& 17.33& 0.0324& -0.0153& 0.0286& 0.5368& 0.0270& 0.0907& 0.0301&
  -0.0164& 0.0253& 89.0\%& 0.106\\
  &&& 0.8& 15.40& 0.0334& -0.0200& 0.0268& 0.7464& 0.0255& 0.0917& 0.0316&
  -0.0208& 0.0238& 85.6\%& 0.100\\
  \hline
  \multirow{6}{*}{\rotatebox{90}{Debiasing}}&1& $\hat\lambda_{1}$& 1.0& 11.05&
  0.0434& -0.0029& 0.0433& 0.0678& 0.0350& 0.1023& 0.0297& -0.0027& 0.0285&
  89.0\%& 0.137\\
  && $\lambda_{1}$& 1.2& 7.86& 0.0458& -0.0062& 0.0454& 0.1376& 0.0316& 0.1138&
  0.0245& -0.0038& 0.0242& 82.0\%& 0.124\\
  &&& 1.1& 7.21& 0.0484& -0.0088& 0.0476& 0.1843& 0.0307& 0.1181& 0.0238&
  -0.0053& 0.0232& 78.2\%& 0.120\\
  &&& 1.0& 6.55& 0.0511& -0.0119& 0.0497& 0.2404& 0.0301& 0.1239& 0.0235&
  -0.0078& 0.0221& 75.3\%& 0.118\\
  &&& 0.9& 5.90& 0.0572& -0.0187& 0.0541& 0.3466& 0.0291& 0.1299& 0.0241&
  -0.0119& 0.0210& 68.5\%& 0.114\\
  &&& 0.8& 5.24& 0.0641& -0.0249& 0.0591& 0.4209& 0.0284& 0.1389& 0.0275&
  -0.0191& 0.0198& 63.0\%& 0.111\\
  \hline
  \multirow{6}{*}{\rotatebox{90}{Robust BC}}&1& $\hat\lambda_{0}$& 1.0& 14.72&
  0.0419& -0.0010& 0.0419& 0.0229& 0.0393& 0.0951& 0.0334& -0.0009& 0.0330&
  93.7\%& 0.154\\
  && $\lambda_{0}$& 1.2& 23.10& 0.0500& -0.0001& 0.0500& 0.0014& 0.0483&
  0.0910& 0.0415& -0.0001& 0.0415& 95.4\%& 0.189\\
  &&& 1.1& 21.18& 0.0483& 0.0009& 0.0483& 0.0193& 0.0468& 0.0912& 0.0398&
  -0.0001& 0.0398& 95.4\%& 0.183\\
  &&& 1.0& 19.25& 0.0460& 0.0008& 0.0460& 0.0164& 0.0445& 0.0917& 0.0379&
  -0.0001& 0.0379& 95.2\%& 0.174\\
  &&& 0.9& 17.33& 0.0439& 0.0006& 0.0439& 0.0134& 0.0424& 0.0923& 0.0360&
  -0.0002& 0.0360& 94.7\%& 0.166\\
  &&& 0.8& 15.40& 0.0424& 0.0004& 0.0424& 0.0084& 0.0400& 0.0932& 0.0339&
  -0.0003& 0.0339& 93.9\%& 0.157\\
  \hline
\end{tabular}
     \end{scriptsize}
  \end{center}
  \caption{Simulation results with
  $d=1$, $n=1000$, and $B=10$, over $3000$ repeats}
  \label{tab:d1_n1000_B10}
\end{table}

\begin{table}[p]
  \begin{center}
    \begin{scriptsize}
\begin{tabular}{|c|cc|cc|cccc|cc|ccc|cc|}
  \hline
  & $J$ & LS & LM & $\lambda$ & RMSE & Bias & SD & Bias/SD &
  $\SDhattab$ & $\hat\sigma^2$ & ARMSE &
  ABias & ASD & CR & CIW \\
  \hline
  \multirow{6}{*}{\rotatebox{90}{No debiasing}}&0& $\hat\lambda_{0}$& 1.0&
  12.27& 0.0893& -0.0648& 0.0614& 1.0566& 0.0551& 0.0979& 0.0836& -0.0678&
  0.0476& 72.7\%& 0.216\\
  && $\lambda_{0}$& 1.2& 18.39& 0.0857& -0.0310& 0.0799& 0.3884& 0.0707&
  0.0866& 0.0771& -0.0292& 0.0714& 87.7\%& 0.277\\
  &&& 1.1& 16.85& 0.0826& -0.0346& 0.0750& 0.4619& 0.0675& 0.0881& 0.0741&
  -0.0347& 0.0654& 87.9\%& 0.265\\
  &&& 1.0& 15.32& 0.0819& -0.0435& 0.0694& 0.6267& 0.0635& 0.0906& 0.0728&
  -0.0420& 0.0595& 85.2\%& 0.249\\
  &&& 0.9& 13.79& 0.0815& -0.0505& 0.0640& 0.7891& 0.0595& 0.0934& 0.0746&
  -0.0519& 0.0535& 81.4\%& 0.233\\
  &&& 0.8& 12.26& 0.0863& -0.0626& 0.0593& 1.0557& 0.0553& 0.0972& 0.0811&
  -0.0657& 0.0476& 75.0\%& 0.217\\
  \hline
  \multirow{6}{*}{\rotatebox{90}{Debiasing}}&1& $\hat\lambda_{1}$& 1.0& 9.24&
  0.1034& -0.0138& 0.1024& 0.1348& 0.1017& 0.1356& 0.0723& -0.0082& 0.0694&
  92.4\%& 0.399\\
  && $\lambda_{1}$& 1.2& 7.18& 0.0959& -0.0221& 0.0933& 0.2368& 0.0925& 0.1577&
  0.0550& -0.0108& 0.0540& 88.8\%& 0.363\\
  &&& 1.1& 6.58& 0.0964& -0.0284& 0.0921& 0.3082& 0.0901& 0.1698& 0.0518&
  -0.0154& 0.0495& 87.1\%& 0.353\\
  &&& 1.0& 5.99& 0.1007& -0.0378& 0.0934& 0.4046& 0.0855& 0.1831& 0.0503&
  -0.0225& 0.0450& 83.3\%& 0.335\\
  &&& 0.9& 5.39& 0.1079& -0.0503& 0.0955& 0.5269& 0.0814& 0.2002& 0.0530&
  -0.0343& 0.0405& 77.4\%& 0.319\\
  &&& 0.8& 4.79& 0.1208& -0.0748& 0.0949& 0.7879& 0.0740& 0.2138& 0.0656&
  -0.0549& 0.0360& 68.3\%& 0.290\\
  \hline
  \multirow{6}{*}{\rotatebox{90}{Robust BC}}&1& $\hat\lambda_{0}$& 1.0& 12.27&
  0.1231& -0.0049& 0.1230& 0.0399& 0.1182& 0.1146& 0.0923& -0.0015& 0.0922&
  95.1\%& 0.463\\
  && $\lambda_{0}$& 1.2& 18.39& 0.1577& -0.0036& 0.1576& 0.0226& 0.1361&
  0.1058& 0.1381& -0.0003& 0.1381& 92.7\%& 0.534\\
  &&& 1.1& 16.85& 0.1491& -0.0021& 0.1491& 0.0139& 0.1338& 0.1065& 0.1266&
  -0.0004& 0.1266& 93.4\%& 0.525\\
  &&& 1.0& 15.32& 0.1404& -0.0012& 0.1404& 0.0088& 0.1287& 0.1071& 0.1151&
  -0.0005& 0.1151& 94.3\%& 0.504\\
  &&& 0.9& 13.79& 0.1294& -0.0018& 0.1294& 0.0138& 0.1231& 0.1093& 0.1036&
  -0.0008& 0.1036& 95.3\%& 0.482\\
  &&& 0.8& 12.26& 0.1179& -0.0028& 0.1178& 0.0240& 0.1172& 0.1130& 0.0921&
  -0.0013& 0.0921& 95.6\%& 0.459\\
  \hline
\end{tabular}
     \end{scriptsize}
  \end{center}
  \caption{Simulation results with
  $d=2$, $n=1000$, and $B=10$, over $3000$ repeats}
  \label{tab:d2_n1000_B10}
\end{table}
 
\pagebreak

\bibliographystyle{hapalike}
\bibliography{refs}

\begin{thebibliography}{}

\bibitem[Abdulsalam et~al., 2010]{abdulsalam2010classification}
Abdulsalam, H., Skillicorn, D.~B., and Martin, P. (2010).
\newblock Classification using streaming random forests.
\newblock {\em IEEE Transactions on Knowledge and Data Engineering},
  23(1):22--36.

\bibitem[Arnould et~al., 2023]{arnould2023interpolation}
Arnould, L., Boyer, C., and Scornet, E. (2023).
\newblock Is interpolation benign for random forest regression?
\newblock In {\em International Conference on Artificial Intelligence and
  Statistics}, pages 5493--5548. PMLR.

\bibitem[Baptista et~al., 2024]{baptista2024trim}
Baptista, R., O'Reilly, E., and Xie, Y. (2024).
\newblock {TrIM}: Transformed iterative {Mondrian} forests for gradient-based
  dimension reduction and high-dimensional regression.
\newblock {\em arXiv:2407.09964}.

\bibitem[Biau, 2012]{biau2012analysis}
Biau, G. (2012).
\newblock Analysis of a random forests model.
\newblock {\em Journal of Machine Learning Research}, 13:1063--1095.

\bibitem[Birg{\'e}, 2001]{birge2001alternative}
Birg{\'e}, L. (2001).
\newblock An alternative point of view on {Lepski}'s method.
\newblock {\em Lecture Notes--Monograph Series}, 36:113--133.

\bibitem[Boucheron et~al., 2013]{boucheron2013concentration}
Boucheron, S., Lugosi, G., and Massart, P. (2013).
\newblock {\em Concentration Inequalities: A Nonasymptotic Theory of
  Independence}.
\newblock Oxford University Press, Oxford.

\bibitem[Breiman, 2001]{breiman2001random}
Breiman, L. (2001).
\newblock Random forests.
\newblock {\em Machine Learning}, 45:5--32.

\bibitem[Breiman et~al., 1984]{breiman1984}
Breiman, L., Friedman, J., Olshen, R., and Stone, C.~J. (1984).
\newblock {\em Classification and Regression Trees}.
\newblock Chapman and Hall/CRC, New York.

\bibitem[Calonico et~al., 2018]{calonico2018jasa}
Calonico, S., Cattaneo, M.~D., and Farrell, M.~H. (2018).
\newblock On the effect of bias estimation on coverage accuracy in
  nonparametric inference.
\newblock {\em Journal of the American Statistical Association},
  113(522):767--779.

\bibitem[Calonico et~al., 2022]{calonico2022bernoulli}
Calonico, S., Cattaneo, M.~D., and Farrell, M.~H. (2022).
\newblock Coverage error optimal confidence intervals for local polynomial
  regression.
\newblock {\em Bernoulli}, 28(4):2998--3022.

\bibitem[Caruana et~al., 2004]{caruana2004ensemble}
Caruana, R., Niculescu-Mizil, A., Crew, G., and Ksikes, A. (2004).
\newblock Ensemble selection from libraries of models.
\newblock In {\em Proceedings of the Twenty-First International Conference on
  Machine Learning}, page~18. ACM.

\bibitem[Cattaneo et~al., 2024a]{cattaneo2024convergence}
Cattaneo, M.~D., Chandak, R., and Klusowski, J.~M. (2024a).
\newblock Convergence rates of oblique regression trees for flexible function
  libraries.
\newblock {\em The Annals of Statistics}, 52(2):466--490.

\bibitem[Cattaneo et~al., 2020]{cattaneo2020large}
Cattaneo, M.~D., Farrell, M.~H., and Feng, Y. (2020).
\newblock Large sample properties of partitioning-based series estimators.
\newblock {\em The Annals of Statistics}, 48(3):1718--1741.

\bibitem[Cattaneo et~al., 2024b]{cattaneo2024pointwise}
Cattaneo, M.~D., Klusowski, J.~M., and Tian, P.~M. (2024b).
\newblock On the pointwise behavior of recursive partitioning and its
  implications for heterogeneous causal effect estimation.
\newblock {\em arXiv:2211.10805}.

\bibitem[Cattaneo et~al., 2025]{cattaneo2025honest}
Cattaneo, M.~D., Klusowski, J.~M., and Yu, R.~R. (2025).
\newblock The honest truth about causal trees: Accuracy limits for
  heterogeneous treatment effect estimation.
\newblock {\em arXiv:2509.11381}.

\bibitem[Chi et~al., 2022]{Chi-Vossler-Fan-Lv_2022_AOS}
Chi, C.-M., Vossler, P., Fan, Y., and Lv, J. (2022).
\newblock Asymptotic properties of high-dimensional random forests.
\newblock {\em The Annals of Statistics}, 50(6):3415--3438.

\bibitem[Duroux and Scornet, 2018]{duroux2018impact}
Duroux, R. and Scornet, E. (2018).
\newblock Impact of subsampling and tree depth on random forests.
\newblock {\em ESAIM: Probability and Statistics}, 22:96--128.

\bibitem[Efron and Stein, 1981]{efron1981jackknife}
Efron, B. and Stein, C. (1981).
\newblock The jackknife estimate of variance.
\newblock {\em The Annals of Statistics}, 9(3):586--596.

\bibitem[Gao et~al., 2022]{gao2022towards}
Gao, W., Xu, F., and Zhou, Z.-H. (2022).
\newblock Towards convergence rate analysis of random forests for
  classification.
\newblock {\em Artificial Intelligence}, 313:103788.

\bibitem[Goldenshluger and Lepski, 2008]{goldenshluger2008universal}
Goldenshluger, A. and Lepski, O. (2008).
\newblock Universal pointwise selection rule in multivariate function
  estimation.
\newblock {\em Bernoulli}, 14(4):1150--1190.

\bibitem[Gomes et~al., 2017]{gomes2017adaptive}
Gomes, H.~M., Bifet, A., Read, J., Barddal, J.~P., Enembreck, F., Pfharinger,
  B., Holmes, G., and Abdessalem, T. (2017).
\newblock Adaptive random forests for evolving data stream classification.
\newblock {\em Machine Learning}, 106:1469--1495.

\bibitem[Huang, 2003]{huang2003local}
Huang, J.~Z. (2003).
\newblock Local asymptotics for polynomial spline regression.
\newblock {\em The Annals of Statistics}, 31(5):1600--1635.

\bibitem[Ibragimov and Linnik, 1975]{ibragimov1975independent}
Ibragimov, I.~A. and Linnik, Y.~V. (1975).
\newblock {\em Independent and Stationary Sequences of Random Variables}.
\newblock Wolters--Noordhoff, Groningen.

\bibitem[Klusowski, 2021]{klusowski2021sharp}
Klusowski, J. (2021).
\newblock Sharp analysis of a simple model for random forests.
\newblock In {\em International Conference on Artificial Intelligence and
  Statistics}, pages 757--765. PMLR.

\bibitem[Klusowski and Tian, 2024]{klusowski2021universal}
Klusowski, J.~M. and Tian, P. (2024).
\newblock Large scale prediction with decision trees.
\newblock {\em Journal of the American Statistical Association},
  119(545):525--537.

\bibitem[Krauss et~al., 2017]{krauss2017deep}
Krauss, C., Do, X.~A., and Huck, N. (2017).
\newblock Deep neural networks, gradient-boosted trees, random forests:
  Statistical arbitrage on the {S\&P} 500.
\newblock {\em European Journal of Operational Research}, 259(2):689--702.

\bibitem[Lafferty and Wasserman, 2008]{lafferty2008rodeo}
Lafferty, J. and Wasserman, L. (2008).
\newblock Rodeo: Sparse, greedy nonparametric regression.
\newblock {\em The Annals of Statistics}, 36(1):28--63.

\bibitem[Lakshminarayanan et~al., 2014]{lakshminarayanan2014mondrian}
Lakshminarayanan, B., Roy, D.~M., and Teh, Y.~W. (2014).
\newblock {Mondrian} forests: Efficient online random forests.
\newblock In {\em Advances in Neural Information Processing Systems},
  volume~27.

\bibitem[Lepski, 1992]{lepski1992asymptotically}
Lepski, O.~V. (1992).
\newblock Asymptotically minimax adaptive estimation {I}: Upper bounds.
  optimally adaptive estimates.
\newblock {\em Theory of Probability \& its Applications}, 36(4):682--697.

\bibitem[Li et~al., 2018]{li2018multi}
Li, X., Wang, Z., Wang, L., Hu, R., and Zhu, Q. (2018).
\newblock A multi-dimensional context-aware recommendation approach based on
  improved random forest algorithm.
\newblock {\em IEEE Access}, 6:45071--45085.

\bibitem[Ma et~al., 2020]{ma2020isolation}
Ma, H., Ghojogh, B., Samad, M.~N., Zheng, D., and Crowley, M. (2020).
\newblock Isolation {Mondrian} forest for batch and online anomaly detection.
\newblock In {\em 2020 IEEE International Conference on Systems, Man, and
  Cybernetics}, pages 3051--3058. IEEE.

\bibitem[Martindale et~al., 2020]{martindale2020ensemble}
Martindale, N., Ismail, M., and Talbert, D.~A. (2020).
\newblock Ensemble-based online machine learning algorithms for network
  intrusion detection systems using streaming data.
\newblock {\em Information}, 11(6):315.

\bibitem[Miftachov and Rei{\ss}, 2025]{miftachov2025early}
Miftachov, R. and Rei{\ss}, M. (2025).
\newblock Early stopping for regression trees.
\newblock {\em arXiv:2502.04709}.

\bibitem[Mourtada et~al., 2020]{mourtada2020minimax}
Mourtada, J., Ga{\"i}ffas, S., and Scornet, E. (2020).
\newblock Minimax optimal rates for {Mondrian} trees and forests.
\newblock {\em The Annals of Statistics}, 48(4):2253--2276.

\bibitem[Mourtada et~al., 2021]{mourtada2021amf}
Mourtada, J., Ga{\"\i}ffas, S., and Scornet, E. (2021).
\newblock {AMF}: Aggregated {Mondrian} forests for online learning.
\newblock {\em Journal of the Royal Statistical Society Series B: Statistical
  Methodology}, 83(3):505--533.

\bibitem[N{\"a}f et~al., 2023]{naf2023confidence}
N{\"a}f, J., Emmenegger, C., B{\"u}hlmann, P., and Meinshausen, N. (2023).
\newblock Confidence and uncertainty assessment for distributional random
  forests.
\newblock {\em Journal of Machine Learning Research}, 24(366):1--77.

\bibitem[O'Reilly, 2024]{oreilly2024statistical}
O'Reilly, E. (2024).
\newblock Statistical advantages of oblique randomized decision trees and
  forests.
\newblock {\em arXiv:2407.02458}.

\bibitem[O'Reilly and Tran, 2022]{oreilly2022stochastic}
O'Reilly, E. and Tran, N.~M. (2022).
\newblock Stochastic geometry to generalize the {Mondrian} process.
\newblock {\em SIAM Journal on Mathematics of Data Science}, 4(2):531--552.

\bibitem[O'Reilly and Tran, 2024]{oreilly2024minimax}
O'Reilly, E. and Tran, N.~M. (2024).
\newblock Minimax rates for high-dimensional random tessellation forests.
\newblock {\em Journal of Machine Learning Research}, 25(89):1--32.

\bibitem[Osborne and O'Reilly, 2025]{osborne2025uniformly}
Osborne, C. and O'Reilly, E. (2025).
\newblock The uniformly rotated {Mondrian} kernel.
\newblock {\em arXiv:2502.04323}.

\bibitem[Petrov, 1995]{Petrov_1995_Book}
Petrov, V.~V. (1995).
\newblock {\em Limit Theorems of Probability Theory: Sequences of Independent
  Random Variables}, volume~4 of {\em Oxford Studies in Probability}.
\newblock Oxford University Press, Oxford.

\bibitem[Roy et~al., 2008]{roy2008mondrian}
Roy, D.~M., Teh, Y.~W., et~al. (2008).
\newblock The {Mondrian} process.
\newblock In {\em Advances in Neural Information Processing Systems},
  volume~21.

\bibitem[Schucany and Sommers, 1977]{schucany1977improvement}
Schucany, W.~R. and Sommers, J.~P. (1977).
\newblock Improvement of kernel type density estimators.
\newblock {\em Journal of the American Statistical Association},
  72(358):420--423.

\bibitem[Scillitoe et~al., 2021]{scillitoe2021uncertainty}
Scillitoe, A., Seshadri, P., and Girolami, M. (2021).
\newblock Uncertainty quantification for data-driven turbulence modelling with
  {Mondrian} forests.
\newblock {\em Journal of Computational Physics}, 430:110116.

\bibitem[Scornet et~al., 2015]{Scornet-Biau-Vert_2015_AOS}
Scornet, E., Biau, G., and Vert, J.-P. (2015).
\newblock Consistency of random forests.
\newblock {\em The Annals of Statistics}, 43(4):1716--1741.

\bibitem[Stone, 1982]{stone1982optimal}
Stone, C.~J. (1982).
\newblock Optimal global rates of convergence for nonparametric regression.
\newblock {\em The Annals of Statistics}, 10(4):1040--1053.

\bibitem[Vicuna et~al., 2021]{vicuna2021reducing}
Vicuna, M., Khannouz, M., Kiar, G., Chatelain, Y., and Glatard, T. (2021).
\newblock Reducing numerical precision preserves classification accuracy in
  {Mondrian} forests.
\newblock In {\em 2021 IEEE International Conference on Big Data}, pages
  2785--2790. IEEE.

\bibitem[Wager and Athey, 2018]{wager2018estimation}
Wager, S. and Athey, S. (2018).
\newblock Estimation and inference of heterogeneous treatment effects using
  random forests.
\newblock {\em Journal of the American Statistical Association},
  113(523):1228--1242.

\bibitem[Zhan et~al., 2024]{zhan2024non}
Zhan, H., Wang, J., and Xia, Y. (2024).
\newblock Non-asymptotic properties of generalized {Mondrian} forests in
  statistical learning.
\newblock {\em arXiv:2406.00660}.

\bibitem[Zhou and Feng, 2019]{zhou2019deep}
Zhou, Z.-H. and Feng, J. (2019).
\newblock Deep forest.
\newblock {\em National Science Review}, 6(1):74--86.

\end{thebibliography}

\end{document}